\theoremstyle{thmstyleone}%
\newtheorem{theorem}{Theorem}[section]%
\theoremstyle{thmstyletwo}%
\newtheorem{ex}[theorem]{Example}%
\newtheorem{remark}[theorem]{Remark}%
\theoremstyle{thmstylethree}%
\newtheorem{definition}[theorem]{Definition}%
\newtheorem{tec}[theorem]{Principle}
\algnewcommand\Initialization{\item[\textbf{Initialization:}]}%
\let\OLDthebibliography\thebibliography
\renewcommand\thebibliography[1]{
  \OLDthebibliography{#1}
  \setlength{\parskip}{0pt}
  \setlength{\itemsep}{0pt plus 0.3ex}
}
\begin{document}

\begin{fmremark}
  fjdkfjd
\end{fmremark}

\title[A Second-Order TGV Discretization]{\texorpdfstring{A Second-Order TGV Discretization with 90$^\circ$ Rotational Invariance Property\textsuperscript{*}}{A Second-Order TGV Discretization with 90° Rotational Invariance Property}}

\author*[1]{\fnm{Alireza} \sur{Hosseini}}\email{hosseini.alireza@ut.ac.ir}

\author[2]{\fnm{Kristian} \sur{Bredies}}\email{kristian.bredies@uni-graz.at}
\equalcont{\emph{Journal of Mathematical Imaging and Vision}, \textbf{67}, 23 (2025). Springer Science+Business Media, DOI:~10.1007/s10851-024-01224-8.}

\affil*[1]{\orgdiv{School of Mathematics, Statistics and Computer Science, College of Science}, \orgname{University of Tehran}, \orgaddress{\street{P.O. Box 14115-175}, \city{Tehran}, \country{Iran}}. Email: \textcolor{blue}{hosseini.alireza@ut.ac.ir}}

\affil[2]{\orgdiv{Department of Mathematics and Scientific Computing}, \orgname{University of Graz}, \orgaddress{\street{Heinrichstra\ss e 36}, \postcode{8010} \city{Graz}, \country{Austria}}. Email: \textcolor{blue}{kristian.bredies@uni-graz.at}}

\abstract{In this work, we propose a new discretization for second-order total generalized variation (TGV) with some distinct properties compared to existing discrete formulations. The introduced model is based on the same design principles as Condat's discrete total variation model (Condat in SIAM J Imaging Sci 10(3):1258--1290, 2017) and shares its benefits, particularly improved solution quality for imaging problems. We propose an algorithm for general discrete inverse problems with second-order TGV using the new discretization. Numerical results obtained with this algorithm for denoising and upscaling demonstrate the advantages of the discretization. Moreover, to assess the invariance properties of the new model, we compare the results of the proposed TGV and the classic discrete TGV for original data and 90$^{\circ}$ rotated versions. Additionally, we provide an algorithm for calculating the TGV value with respect to the new discretization model.}

\keywords{Image processing $\cdot$ Total generalized variation $\cdot$ TGV discretization $\cdot$ Inverse problem $\cdot$ Primal-dual algorithm}

\msc{65K10 $\cdot$ 68U10}

\maketitle

	\section{Introduction} \label{sec:intro}
	Image reconstruction is a major subject in image and signal processing, applicable in areas such as medical imaging, pattern recognition, and video coding. Various techniques are used for image reconstruction, including spatial filtering \cite{spacial, spacial2}, transform domain filtering \cite{wav1,wav2,wav3}, methods based on partial differential equations \cite{dif1,dif2}, variational methods \cite{upwind, Chambol3, condat1, Shanon, bredies}, and machine learning approaches such as deep learning \cite{deep1,deep2}
	and linear regression \cite{reg}. In this paper, we contribute to variational methods in order to make progress in this area. In particular, a new kind of discrete variational model is proposed to solve image processing tasks. The proposed model is associated with a new discretization of the so-called second-order total generalized variation (TGV) \cite{bredies}.\\
	In imaging problems, it is common to solve inverse problems. Generally, solving an inverse problem amounts to solving an equation of the form
	$$z=G(u),$$
	where $u$ is the initial ``perfect'' image in a continuous domain (e.g., $u\in L^1_{loc}(\Omega)$, for $\Omega\subset\mathbb{R}^d$ domain), $G$ is a forward operator such as blurring, sampling, or more generally, some linear operator, and $z$ is the measured data. The problem is thus reconstructing $u$ from the given data $z$. Due to the ill-posed nature of many inverse problems, regularization is necessary. Tikhonov regularization is a common approach, formulated as an optimization problem of the form
	\begin{equation}\label{OP}
		\min_u \ \mathcal{F}(u)+\mathcal{R}(u),
	\end{equation}
	where $\mathcal{F}$ represents the data fidelity and $\mathcal{R}$ is the regularization functional. The most common fidelity term is of the form
	$$\mathcal{F}(u)=\frac{1}{2}\|G(u)-z\|^2,$$
	where $\|\cdot\|$ is a given norm.
	The regularization functional $\mathcal{R}$ is commonly adapted for imaging problems and the associated applications such as medical imaging, and machine vision. Standard Tikhonov regularization approaches
	will usually consider quadratic forms such as $\mathcal{R}(u)=\frac{1}{2}\|u\|_2^2$ or $\frac{1}{2}\|\nabla u\|_2^2.$ However, it is shown in \cite{tv}, that for denoising problems, $\mathcal{R}(u)=\frac{1}{2}\|u\|_2^2$, often does not provide adequate spatial regularization for imaging problems. Therefore, this is an inadequate choice, since all
	natural images admit a lot of spatial regularity. On the other hand, the second
	case ($\mathcal{R}(u)=\frac{1}{2}\|\nabla u\|_2^2$), normally imposes too much spatial regularization.\\
	In a pioneering work, Rudin et al. \cite{ROF} introduced  ``Total Variation'' (TV) as a regularizer for inverse problems in imaging. This model is straightforward, easy to discretize, and yields reliable numerical results for imaging problems. For instance, it can achieve acceptable regularization for denoising problems, although some artifacts, known as staircasing artifacts, may still remain. Due to these limitations, the TV model has been generalized through the introduction of ``Total Generalized Variation'' (TGV) \cite{bredies}. This generalization defines the $k$-th order TGV ($\text{TGV}^k$) for $k\geq 1$, where $\text{TGV}^1$ corresponds to TV up to a positive factor.\\
	The second-order TGV ($\text{TGV}^2$) is the most commonly used TGV model for imaging problems, exhibiting superior performance for piecewise smooth images compared to TV. Notably, the typical artifacts associated with the TV model are not observed in the results obtained with TGV regularization.\\
	The continuous definition of total variation (see Section 2 for the definition of TV) has the desirable property of isotropy, meaning a rotation of an image in the plane does not change the TV value. It is natural to expect the discretized form of total variation to be rotation-invariant, at least for rotations of any integer multiple of 90 degrees. However, despite its name, isotropic TV does not possess this isotropy property. Efforts have been made to improve on this, resulting in the introduction of other versions of discrete TV with different properties, such as upwind TV  \cite{upwind}, which is about the discrete coarea formula, Shannon discrete TV \cite{Shanon}, Condat's TV which attempts to improve isotropy \cite{condat1}, and an approximation of TV using nonconforming P1 (Crouzeix--Raviart) finite
	elements \cite{finit}.\\
	Condat \cite{condat1}, proposed a new discretization of TV ($\text{TV}_c$), which is inspired by the dual formulation of isotropic TV with additional constraints from domain conversion operators. $\text{TV}_c$ is shown to be invariant to 90-degree rotations and exhibits better performance in imaging problems such as denoising and upscaling compared to classic discrete isotropic TV. Since our new proposed discrete TGV model is inspired by Condat's discretization, we give a brief explanation of this model in the course of this paper.\\
	Additionally, a discretization approach for second-order TGV is presented in \cite{bredies}, which is a straightforward generalization of classic isotropic TV obtained by discretizing the dual formulation of the continuous $\text{TGV}^2$ functional (see Section 2 for the definition of $\text{TGV}^2$). This approach is referred to as classic discrete TGV. {Currently, there are some other sophisticated discretization strategies available in the literature, designed for different aims. Shannon TGV \cite{Hosseini} is a recent model that generalizes Shannon TV. This approach aims to reduce aliasing effects that appear in reconstructed images at sub-pixel levels after upscaling while preserving the ability of TGV to reduce staircases. Another recent discrete TGV approach \cite{Lukas} introduces a discrete version of the second-order TGV-seminorm for piecewise constant functions on general triangular meshes. It is based on lowest-order discontinuous Lagrange and Raviart--Thomas finite elements, allowing TGV to be applied to data structures other than regular grids.} \\
	This paper contributes by designing a new discretization of  $\text{TGV}^2$ in two dimensions with favorable properties compared to existing standard discretization approaches. For TGV, the standard discretization is via finite-difference operators, which have known drawbacks such as lack of rotational invariance, even for grid-preserving 90-degree rotations. The design of approaches with more favorable properties remains an open problem, and this paper aims to fill this gap by generalizing the state-of-the-art strategy proposed by Condat \cite{condat1}.\\
	The rest of the paper is organized as follows: Section 2 provides a review of TV and TGV functionals, including definitions and existing discretizations. In Section 3, the new discrete second-order TGV is designed, introducing staggered grid sets and elementary operators. New difference operators are proposed, and together with domain conversion operators, the mathematical model of the new discrete second-order TGV is formulated. Section 4 explains some basic invariance properties of the proposed model. Section 5 proposes numerical algorithms for solving some discrete inverse problems in image processing and compares the results to classic  discrete TV, Condat's discrete TV, {Shannon TGV,} and classic discrete TGV. Furthermore,
	the rotation invariance of the newly proposed discrete TGV with respect to integer multiples of 90-degree rotations is illustrated numerically and compared to the existing classic discrete TGV model \cite{bredies}.
	\section{TV, TGV and Their Discretizations}
	In the following, we review TV and TGV functionals including two well-known TV discretization models: isotropic TV, Condat's TV, and the classic TGV discretization.
	\subsection{Total Variation} \label{sec:tv}
	The total variation (TV) is a prevalent functional frequently employed to regularize ill-posed  inverse problems in imaging.
	In continuous domains, the total variation of $u\in L^1_{loc}(\Omega), \Omega\subset \mathbb{R}^d$ domain is defined by
	\begin{multline}\label{j}
		\text{TV}(u)=\sup\biggl\{-\int_{\Omega}u\cdot \text{div} \phi\ dX: \phi\in C_c^{1}(\Omega,\mathbb{R}^d), \\  |\phi(X)|\leq 1\ \forall X\in\Omega\biggr\}.
	\end{multline}
	For $u\in C^1(\Omega)$ (or $W^{1,1}(\Omega)$), it can be verified that
	\begin{equation}\label{CDJ}
		\text{TV}(u)=\int_{\Omega}|\nabla u|\ dX.
	\end{equation}
	In this paper, (\ref{CDJ}) and (\ref{j}), are referred to the primal and the dual formulation of the TV, respectively.
	\subsection{Second-Order TGV} \label{sec:tgv}
	Total generalized variation of order $k$ ($\text{TGV}^k, k\in\mathbb{N})$, is a regularization functional introduced in \cite{bredies}. For theoretical aspects, see \cite{brediestgv11,tgv5,brediestgv12,brediestgv13,tgv4,brediestgv14}, and for applications of TGV, refer to \cite{brediestgv15,brediestgv16,brediestgv17,brediestgv18,brediestgv19,tgv2,tgv3}.  It generalizes the total variation functional TV (\ref{j}) (in the sense of $\text{TGV}^1=\alpha_0\text{TV}$). When $k\geq 2,$ $\text{TGV}^k$ exhibits exceptional properties, such as attenuating artifacts, especially staircase artifacts in imaging problems, which are common with TV regularization.
	The second-order TGV in the continuous domain is defined by:
	\begin{multline}\label{tgv1}
		\text{TGV}_{\alpha}^2(u)=\sup \ \biggl\{\int_{\Omega}u\cdot\text{div}^2 v\ dX: v\in C_c^{2}(\Omega, \text{Sym}^{2}(\mathbb{R}^d)),  \\ \|\text{div}^l v\|_{\infty}\leq \alpha_l, l=0,1 \biggr\},
	\end{multline}
	for $u\in L^1_{loc}(\Omega)$, $\Omega\subset\mathbb{R}^d$ domain, $\alpha_0>0,\alpha_1>0$. In the paper, we focus on the two dimensional setting, i.e., $d=2.$ If $u\in C^2(\Omega),$ this definition can be rewritten to:
	\begin{align} \notag
		\text{TGV}^2_{\alpha}(u)&= \min_w \ \alpha_1\int_{\Omega}|\nabla u-w|\ dX+\alpha_0\int_{\Omega}|\epsilon(w)|\ dX, \\
		\epsilon(w) &= \left(
		\begin{array}{cc}
			\frac{\partial w_1}{\partial x}                                           & \frac{\frac{\partial w_1}{\partial y}+\frac{\partial w_2}{\partial x}}{2} \\ \label{tgv4}
			\frac{\frac{\partial w_1}{\partial y}+\frac{\partial w_2}{\partial x}}{2} & \frac{\partial w_2}{\partial y}
		\end{array}
		\right),
	\end{align}
	where $X = (x,y)$.
	Referring to \cite{bredies}, $C_c^{2}(\Omega, \text{Sym}^{2}(\mathbb{R}^2))$ is the set of symmetric $\mathbb{R}^{2\times 2}$ matrices whose components belong to $C_c^2(\Omega)$.
	In this paper, (\ref{tgv4}) and (\ref{tgv1}), are referred to the primal and the dual formulation of the second-order TGV, respectively.
	\begin{remark}
		In the above $\text{TGV}$
		definition, for $d=2$, $\Omega\subset\mathbb{R}^2, u:\Omega\rightarrow \mathbb{R}, u\in L_{loc}^1(\Omega)$, $v=\left(\begin{array}{cc}
			v_1 & v_3 \\
			v_3 & v_2
		\end{array}\right)$, $v_1,v_2,v_3\in C_c^2(\Omega)$.
		From the definition of the $\text{divergence}$ in \cite{bredies}, we have
		\begin{equation}\label{divd3}
			\text{Div}\ v=\left(\begin{array}{c}
				\frac{\partial v_1}{\partial x}+\frac{\partial v_3}{\partial y} \\
				\frac{\partial v_2}{\partial y}+\frac{\partial v_3}{\partial x}
			\end{array}
			\right).
		\end{equation}
		Moreover, for $w=(w_1,w_2), w_1, w_2\in C_c^1(\Omega)$,
		\begin{equation}
			\text{div}\ w= \frac{\partial w_1}{\partial x}+ \frac{\partial w_2}{\partial y},
		\end{equation}
		From the fact that $\text{div}^2 v=\text{div}(\text{Div} \ v),$ and the definition of $\text{div}\ v$, it can be easily seen that
		$$\text{div}^2 v=\frac{\partial^2 v_1}{\partial x^2 }+\frac{\partial^2 v_2}{\partial y^2 }+2\frac{\partial^2 v_3}{\partial x\partial y}.$$
	\end{remark}
	To discretize TV and TGV, we need some forward difference operators.  In the following, let $A=\{1,\ldots, N_1\}\times\{1,\ldots, N_2\}.$ We define the discrete operators:\\
	$D_{x+}:\mathbb{R}^{N_1\times N_2}\rightarrow \mathbb{R}^{N_1\times N_2}$ as discrete approximation of the partial derivative with respect to direction $x$:\\
	\begin{equation}\label{dxx+}
		(D_{x+}u)(n_1,n_2)=\left\{ \begin{array}{l}
                  u(n_1+1,n_2) -u(n_1,n_2), \\
                   \quad \ \ (n_1,n_2)\in A\setminus (\{N_1\}\times\{1,2,\ldots, N_2\}), \\
			0, \ \ \ \text{else},
		\end{array}
		\right.
	\end{equation}
	assuming  homogeneous discrete Neumann boundary conditions
	on $\{N_1\}\times\{1,2,\ldots, N_2\},$ i.e.,   $$u(N_1+1,n_2)=u(N_1,n_2),  \quad n_2=1,\ldots, N_2.$$ 
	$D_{y+}:\mathbb{R}^{N_1\times N_2}\rightarrow \mathbb{R}^{N_1\times N_2}$ as discrete approximation of the partial derivative with respect to direction $y$:\\
	\begin{equation}\label{dyy+}
		(D_{y+}u)(n_1,n_2)=\left\{ \begin{array}{l}
                  u(n_1,n_2+1)-u(n_1,n_2), \\
                  \quad \ \ (n_1,n_2)\in A\setminus (\{1,2,\ldots, N_1\}\times \{N_2\}), \\
			0,  \ \ \ \text{else},
		\end{array}
		\right.
	\end{equation}
	assuming  homogeneous discrete Neumann boundary conditions
	on $\{1,2,\ldots, N_1\}\times \{N_2\},$ i.e.,   $$u(n_1,N_2+1)=u(n_1,N_2), \quad n_1=1,\ldots, N_1.$$
	Moreover, we define
	$\mathcal{D}=(D_{x+}, D_{y+})$.
	\subsection {Classic Discrete TV (Isotropic TV) as a Discrete TV that is not isotropic Literally} \label{sec:iso_tv}
	The classic discrete TV, also called isotropic TV, for a discrete image $u\in\mathbb{R}^{N_1\times N_2}$ is defined by:
	\begin{equation}\label{ITV}
		\text{TV}_i(u)=\sum_{n_1=1}^{N_1}\sum_{n_2=1}^{N_2} \sqrt{(D_{x+}u)(n_1,n_2)^2+(D_{y+}u)(n_1,n_2)^2}.
	\end{equation}
	This definition is inspired from the primal formulation of $\text{TV}$ for smooth functions (\ref{CDJ}), with replacing the differential operators $\frac{\partial u}{\partial x}$ and $\frac{\partial u}{\partial y}$ by finite-difference operators $D_{x+}$ and $D_{y+}$ (see (\ref{dxx+}) and (\ref{dyy+})). {Already being considered in the seminal paper \cite{ROF}, it evolved to the standard and most popular choice for discrete TV. Reasons for that may, on the one hand, be its simplicity and, on the other hand, be the availability of efficient and widespread computational algorithms that {are} based on this discretization (see, for instance, \cite{chambollealg,pridua}).}
	It is easy to see that classic discrete TV has a dual form which can be implemented by the following optimization problem:
	\begin{align} \notag
			\text{TV}_i(u) &=\max_{v\in {(\mathbb{R}^2)}^{N_1\times N_2}}\langle \mathcal{D}u,v\rangle \\ \notag &=\max_{v\in {(\mathbb{R}^2)}^{N_1\times N_2}}\sum_{n_1=1}^{N_1}\sum_{n_2=1}^{N_2} (D_{x+}u)(n_1,n_2)\cdot v_1(n_1,n_2) \\[-0.5em] \notag & \qquad\qquad\qquad\qquad\qquad +(D_{y+}u)(n_1,n_2)\cdot v_2(n_1,n_2), \\[0.25em] \notag
                                       &\text{subject\ to}\quad |v(n_1,n_2)|\leq 1, \\ \label{dis}
          & \qquad\qquad\quad \forall(n_1,n_2)\in \{1,\ldots, N_1\}\times\{1,\ldots, N_2\},
	\end{align}
	where, for $v\in\mathbb{R}^2,$ $|v|=\sqrt{v_1^2+v_2^2}.$
	Surprisingly, this discretization is not invariant with respect to $90^{\circ}$ rotations. In other words, if $\mathcal{R}_{90}u$ is the $90^\circ$ rotated version of a discrete image $u,$ then generally $\text{TV}_i(u)\neq \text{TV}_i(\mathcal{R}_{90}u).$ {For example, if} $u\in\mathbb{R}^{2\times2}, u(1,1)=1, u(n_1,n_2)=0, n_1,n_2=1,2, (n_1,n_2)\neq (1,1),$ then it is easy to see that $\text{TV}_i(u)=\sqrt{2}$ and $\text{TV}_i(\mathcal{R}_{90}u)=2.$ For a general angle $\theta$ using the rotation operator $\mathcal{R}_{\theta},$ we also observe non-invariance. However, { a question arises:} what kind of rotation invariance can we expect from a discretization? In the sequel, we review Condat's discrete TV,  a modified model designed {through} grid domain conversions. This discretization is exact up to numerical precision with respect to $ 90^\circ$ rotations and {provides} a better approximation for the other rotation angles {compared} to $\text{TV}_i.$
	\subsection{Condat's TV as a More Isotropic Discrete TV } \label{sec:condat_tv}
	Based on the dual formulation of the continuous TV (\ref{j}) (whose discretization is given in (\ref{dis})) and introducing three linear operators $L_{\bullet}, L_{\leftrightarrow}$ and $L_{\updownarrow}$ over $(\mathbb{R}^2)^{N_1\times N_2}$, Condat \cite{condat1} proposed a new discretization of the total variation.  {Consider dual version of isotropic TV (\ref{dis}), and its constraint 
		\begin{multline}\label{CTT}
			\sqrt{v_1(n_1,n_2)^2+v_2(n_1,n_2)^2}\leq 1,\\ \forall(n_1,n_2)\in \{1,\ldots, N_1\}\times\{1,\ldots, N_2\}.
		\end{multline}
		where  $v_1$ is located at the same grid domain as $D_{x+}u,$ and $v_2$ is located at the same grid domain as $D_{y+}u$. If we accept that the location of difference of two objects is in the middle of them, then, it is easy to see that the grid locations of  $D_{x+}u$ and $D_{y+}u$ are different. Condat \cite{condat1}, suggests a strategy, to impose a unification  to the locations of $v_1,$ and $v_2,$ in constraint (\ref{CTT}), in the way that guarantee invariant property with respect to $90^\circ$ rotations. Three converting operators to convert $v_1,$ and $v_2$ to three different grid domains are defined; $L_\bullet$ (converts signals to the centers of pixels), $L_\leftrightarrow$ (converts signals to the middles of vertical edges of pixels) and $L_\updownarrow$ (converts signals to the middles of horizontal edges of pixels). Objective function of Condat's 
		model is the same of isotropic TV (\ref{dis}), but instead of (\ref{CTT}), the following three constraints are introduced:
		\begin{multline}\label{CTT2}
			\sqrt{(L_\star v)_1(n_1,n_2)^2+(L_\star v)_2(n_1,n_2)^2}\leq 1,\\ \forall(n_1,n_2)\in \{1,\ldots, N_1\}\times\{1,\ldots, N_2\},\ \star\in \{\bullet, \leftrightarrow, \updownarrow \}.
		\end{multline}
		Consequently, Condat-TV reads as }
	\begin{align}\notag
          \displaystyle
          \text{TV}_c(u) &= \max_{v\in {(\mathbb{R}^2)}^{N_1\times N_2}}\langle \mathcal{D}u,v\rangle \\ \notag &=\max_{v\in {(\mathbb{R}^2)}^{N_1\times N_2}}\sum_{n_1=1}^{N_1}\sum_{n_2=1}^{N_2} (D_{x+}u)(n_1,n_2)\cdot v_1(n_1,n_2) \\ \notag & \qquad\qquad\qquad\qquad\quad  +(D_{y+}u)(n_1,n_2)\cdot v_2(n_1,n_2), \\ \notag
			& \text{subject\ to}\quad |L_{\bullet}v(n_1,n_2)|\leq 1, \  |L_{\leftrightarrow}v(n_1,n_2)|\leq 1,\\ \notag  & \phantom{\text{subject\ to}\quad} |L_{\updownarrow}v(n_1,n_2)|\leq 1, \\ \label{condat} & \qquad \forall(n_1,n_2)\in \{1,\ldots, N_1\}\times\{1,\ldots, N_2\},
	\end{align}
	where
	\begin{align} \notag
			(L_{\bullet} v)_1 (n_1,n_2) &= \frac{1}{2} \big( v_1(n_1,n_2) + v_1(n_1-1,n_2) \big),\\ \notag
			(L_{\bullet} v)_2 (n_1,n_2) &= \frac{1}{2}\big( v_2(n_1,n_2) + v_2(n_1,n_2-1) \big),\\ \notag
          (L_{\leftrightarrow} v)_1 (n_1,n_2) &= \frac{1}{4} \big(  v_1(n_1,n_2) + v_1(n_1-1,n_2) \\ \notag
          & \qquad + v_1 (n_1,n_2+1) + v_1(n_1-1,n_2+1)  \big),\\ \notag
			(L_{\leftrightarrow} v)_2(n_1,n_2) &= v_2(n_1,n_2), \\ \notag
			(L_{\updownarrow} v)_1 (n_1,n_2) &= v_1(n_1,n_2), \\ \notag
			(L_{\updownarrow}v)_2 (n_1,n_2) &= \frac{1}{4}
                                                          \big( v_2(n_1,n_2) + v_2(n_1,n_2-1)  \\
          & \label{L operators} \qquad + v_2 (n_1+1,n_2) + v_2(n_1+1,n_2-1)   \big),
	\end{align}
	{with  zero boundary condition assumptions, that is,
		for any $(n_1,n_2)\in \{1,2,\ldots,N_1\}\times\{1,2,\ldots,N_2\}$, $ v_1(0,n_2)=v_2(n_1,0)=v_1(N_1,n_2)=v_2(n_1,N_2)=0$. We also set
		$(L_{\leftrightarrow} v)_1(N_1,n_2)= (L_{\leftrightarrow}v)_2(N_1,n_2)=0,$ $(L_{\updownarrow} v)_1(n_1,N_2)= (L_{\updownarrow}v)_2(n_1,N_2)=0$.}\\
	{Given that the dual variable is bounded everywhere in the continuous definition (\ref{dis}), the insertion of three constraints using the proposed linear operators imposes boundedness on the dual variables on a grid three times denser than the pixel grid. Condat's TV is a discrete TV with isotropy properties; in other words, after rotating the image by any integer multiple of $90^\circ$, the TV value remains unchanged up to numerical precision. For other rotation angles, a better approximation can be obtained compared to isotropic TV. Additionally, this model performs well in removing noise and reconstructing edges compared to isotropic TV.\\
		The following section discusses the generalization of this idea to design a discrete second-order total generalized variation with the same rotational invariance properties.}
	\subsection{Classic Discrete TGV (Discretization of (\ref{tgv4}))}\label{sec25}
	Classic discretization of the second-order TGV \cite{bredies} is briefly explained in the following. Assume that $u\in \mathbb{R}^{N_1\times N_2}$ is a two-dimensional  $N_1\times N_2$ pixels image with homogeneous discrete Neumann boundary conditions, that is $u(n_1, N_2+1)=u(n_1,N_2), u(n_1,0)=u(n_1,1),$ for $1\leq n_1\leq N_1$ and  $u(N_1+1, n_2)=u(N_1,n_2), u(0,n_2)=u(1,n_2),$ for $1\leq n_2\leq N_2.$
	In the following, based on forward operators introduced in (\ref{dxx+}) and (\ref{dyy+}),  by enforcing a discrete Gauss--Green theorem,  backwards operators are defined as well. Consequently, all discrete operators for designing discrete TGV models {are} obtained.	\\
	It is natural to define $\mathcal{D}=(D_{x+}\  D_{y+}):\mathbb{R}^{N_1\times N_2}\rightarrow (\mathbb{R}^2)^{N_1\times N_2}$, as a discretization of the gradient operator $\nabla$ appearing in (\ref{tgv4}). Now, we define backwards difference operators
	$D_{x-}, D_{y-}:\mathbb{R}^{N_1\times N_2}\rightarrow \mathbb{R}^{N_1\times N_2}.$
	Note that from the theory of linear operators (the discrete Gauss--Green theorem),
	for any $u\in \mathbb{R}^{N_1\times N_2}$, $v\in (\mathbb{R}^2)^{N_1\times N_2}$, we have:
	\begin{equation}\label{adj1} \langle \mathcal{D}u,v\rangle=\langle u,\mathcal{D}^*v\rangle.
	\end{equation}
	On the other hand, for $u \in C_0^1(\Omega,\mathbb{R})$ and $v \in C_0^1(\Omega,\mathbb{R}^2)$, $\Omega\subset \mathbb{R}^2$ domain,  we have: %
	\begin{equation}\label{adj2}
		\langle \nabla u, v\rangle= \int_{\Omega}\nabla u\cdot v\  dxdy=-\int_{\Omega} u\cdot\text{div}\ v\ dxdy=-\langle u,\text{div}\ v\rangle,
	\end{equation}
	that is, $\nabla^*=-\text{div}.$ As $\mathcal{D}$ is a discrete approximation of $\nabla$, we can define the divergence operator on $ (\mathbb{R}^2)^{N_1\times N_2}$ by
	$\text{div}=-\mathcal{D}^*$. From (\ref{adj1}), we get
	\begin{equation}\label{adj3}
		-\text{div}\ v=\mathcal{D}^*v=-D_{x-}v_1-D_{y-}v_2,
	\end{equation}
	where for $w\in (\mathbb{R})^{N_1\times N_2}$, the backwards operator $D_{x-}$ in the $x$-direction is given as 
        \begin{multline*}
          (D_{x-}w)(n_1,n_2)=w(n_1,n_2)-w(n_1-1,n_2),\\  (n_1,n_2) \in A\setminus (\{1,N_1\}\times\{1,\ldots, N_2\}).
        \end{multline*}
	With homogeneous Neumann boundary conditions and the definition of the adjoint operator we get
        \begin{align*} 
          (D_{x-}w)(1,n_2) &= w(1,n_2),\\ \notag (D_{x-}w)(N_1,n_2) &= -w(N_1-1,n_2), \\ n_2&= 1,\ldots, N_2.
        \end{align*}
	Similarly, the backwards operator $D_{y-}$ in the $y$-direction is given as:
        \begin{multline*}
          (D_{y-}w)(n_1,n_2)=w(n_1,n_2)-w(n_1,n_2-1),\\ (n_1,n_2)\in A\setminus (\{1, \ldots N_1\}\times\{1, N_2\}),
        \end{multline*}
	and
        \begin{align*}
          (D_{y-}w)(n_1,1)&=w(n_1,1),\\ \notag (D_{y-}w)(n_1,N_2)&=-w(n_1,N_2-1),\\ n_1&=1,\ldots, N_1.
        \end{align*}
	The classic discretization of TGV is the discretization of the continuous version (\ref{tgv4}) as follows:
	\begin{align} \notag
			&\text{TGV}_{\alpha}^2(u) =\min_{w=(w_1,w_2)\in(\mathbb{R}^2)^{N_1\times N_2}} \\ \notag & \quad\ \  \alpha_1\sum_{n_1,n_2}\sqrt{
                                                                                                            \begin{aligned}
                                         & ((D_{x+}u)(n_1,n_2)-w_1(n_1,n_2))^2 \\ & \quad +((D_{y+}u)(n_1,n_2)-w_2(n_1,n_2))^2
                            \end{aligned}
                            } \\ \label{tgv5}
			& \quad +\alpha_0
          \sum_{n_1,n_2}\sqrt{
                          \begin{aligned}
                            &(D_{x+}w_1)(n_1,n_2)^2+(D_{y+}w_2)(n_1,n_2)^2 \\ & \quad +\frac{1}{2}[(D_{x+}w_2)(n_1,n_2)+(D_{y+}w_1)(n_1,n_2)]^2,
                          \end{aligned}
                          }
        \end{align}
	where $\alpha_0 > 0$, $\alpha_1 > 0$.
	The previously introduced operators are sufficient to model this optimization problem.
	\subsubsection{Dual Form of the Classic Discrete Second-Order TGV}
	Here, {we present} the dual formulation of the second-order discrete TGV (\ref{tgv5}).  The operator $\text{div} $, operating on $v\in (\mathbb{R}^2)^{N_1\times N_2}$ is defined in (\ref{adj3}).
	However, we also need the discrete operator $\text{Div}$ which operates on $v=\left(\begin{array}{cc}
		v_1 & v_3 \\
		v_3 & v_2
	\end{array}\right)\in S(\mathbb{R}^4)^{N_1\times N_2},$ where $S(\mathbb{R}^4)^{N_1\times N_2}$ is the set of symmetric second-order tensor fields in $(\mathbb{R}^4)^{N_1\times N_2}.$
	From the definition of $\text{Div}$ in (\ref{divd3}), we can define
	\begin{equation}\label{div3D}
		\text{Div}: S(\mathbb{R}^4)^{N_1\times N_2}\rightarrow (\mathbb{R}^2)^{N_1\times N_2},\  \text{Div}\ v= \left(\begin{array}{c}
			D_{x-}v_1 +D_{y-}v_3 \\
			D_{x-}v_3+D_{y-}v_2
		\end{array}
		\right).
	\end{equation}
	It can now be checked that $\text{Div}$ according to (\ref{div3D}) is the negative adjoint of $\mathcal{E},$ where
	$\mathcal{E}:u\in (\mathbb{R}^2)^{N_1\times N_2}\rightarrow S(\mathbb{R}^4)^{N_1\times N_2}$ is given by  $$\mathcal{E} u= \left(\begin{array}{cc}
		D_{x+}u_1                        & \frac{1}{2}(D_{y+}u_1+D_{x+}u_2) \\
		\frac{1}{2}(D_{y+}u_1+D_{x+}u_2) & D_{y+}u_2
	\end{array}
	\right).$$
	Furthermore, from the fact that for $v\in C_c^{2}(\Omega, \text{Sym}^{2}(\mathbb{R}^d))$, $\text{div}^2 v=\text{div}(\text{Div}\ v)$, we can concatenate the operations in (\ref{adj3}) and (\ref{div3D}) to obtain a discrete second-order divergence as follows:
	\begin{align}\notag
			\text{div}^2 &: S(\mathbb{R}^4)^{N_1\times N_2}\rightarrow \mathbb{R}^{N_1\times N_2}, \\ \notag
			\text{div}^2 v &= \text{div}\ \text{Div}\ v = D_{x-}D_{x-} v_1+D_{y-}D_{y-}v_2 \\ \label{div2} & \qquad\qquad\qquad +[D_{x-}D_{y-}+D_{y-}D_{x-}]v_3.
	\end{align}
	For the adjoint operator of $\text{div}^2$, which is the second derivative, i.e.,  $\mathcal{D}^2=\mathcal{E}\mathcal{D}$, it is easy to see that
	\begin{align}\notag
		u &\in (\mathbb{R})^{N_1\times N_2},\\\label{D2} \mathcal{D}^2 u &= \left(\begin{array}{cc}
			 D_{x+}D_{x+}u                            & \!\!\!\!\!\!\!\!\!\!\! \frac{1}{2}(D_{y+}D_{x+}u+D_{x+}D_{y+}u) \\
			\frac{1}{2}(D_{y+}D_{x+}u+D_{x+}D_{y+}u) & D_{y+}D_{y+}u
		\end{array}
		\right).
	\end{align}
	Indeed, a discrete Gauss--Green theorem as follows is valid for $u \in \mathbb{R}^{N_1 \times N_2}$ and $v \in S(\mathbb{R}^4)^{N_1 \times N_2}$:
	\begin{equation}
		\langle u,\text{div}^{2} v\rangle=-\langle \mathcal{D}u, \text{Div}\, v\rangle=
		\langle \mathcal{D}^{2}u, v\rangle.
	\end{equation}
	Consequently, it can be verified that the Fenchel--Rockafellar dual form of the second-order classic discrete TGV (\ref{tgv5}) is as follows:
	\begin{align} \notag
		\text{TGV}_{\alpha}^2(u)=\max \left\{\langle u,\text{div}^2 v\rangle: v \right. &\in S(\mathbb{R}^4)^{N_1\times N_2},  \\ \notag & |(\text{Div}\,  v)(n_1,n_2)|\leq\alpha_l\\ & \label{tgv} \left.\forall (n_1,n_2)\in A, l=0,1\right\},
	\end{align}
	where for $v \in S(\mathbb{R}^4)$, $|v| = \sqrt{v_1^2 + v_2^2 + 2v_3^2}$.
	Similar to classic discrete TV, classic discrete TGV suffers from non-invariance with respect to $90^\circ$  rotations. To compensate  this shortcoming,
	inspired by Condat's idea, our proposed model {considers} the dual formulation (instead of the primal one) of the continuous TGV model for discretization. In addition, constraints based on domain conversion operators are proposed to enhance rotational invariance properties. The new proposed model has the advantages of both Condat's discrete TV and classic discrete TGV
	simultaneously. It can attenuate staircase artifacts which is one of the important properties of classic discrete TGV as well as remove noise, reconstruct edges and admit some isotropy properties.
	\subsection{Other TGV Discretizations} \label{sec:other_tgv}
	{
		Here, we provide a brief explanation of two discrete second-order total generalized variation (TGV) models: the Shannon TGV and a TGV model for piecewise constant functions on general triangular meshes.\\
		The second-order Shannon TGV is based on Shannon interpolation \cite{Hosseini}. In this model, a discrete image \( u \in \mathbb{R}^{N_1 \times N_2} \) is interpolated, and then the definition of the continuous TGV value is applied to the obtained continuous interpolated image. Determining this continuous TGV value is generally impossible. Therefore, after discretizing this model, the second-order Shannon TGV is defined as follows:
		\begin{definition}[\cite{Hosseini}]
			Assume \( 2 \leq n \in \mathbb{N} \) and \( u \in \mathbb{R}^{N_1 \times N_2} \). For a given \( \alpha = (\alpha_0, \alpha_1) \in (\mathbb{R}^+)^2 \), the \( n \)-Shannon second-order TGV of \( u \) with the weight vector \( \alpha \) is defined by:
			\begin{multline}
				\text{TGV}^{2(\alpha)}_{\text{SH} (n)}(u) = \max_v \left\{ \frac{1}{n^2} \langle u, \text{div}_n^2 v \rangle : v \in S(\mathbb{R}^4)^{nN_1 \times nN_2}, \right. \\ \left. \|v\|_{\infty} \leq \alpha_0, \| \text{div}_n^{'} v \|_{\infty} \leq \alpha_1 \right\},
			\end{multline}
			where \( \text{div}_n^2 \) and \( \text{div}_n^{'} \) are Shannon divergence operators.
		\end{definition}
		This model involves interpolation over a grid domain more than four times the size of the given image. The variable dimensions are \( n\geq 2 \) times those of the variable dimensions in the both directions in the classic discrete TGV and our proposed upcoming model. Therefore, we expect very high numerical complexity compared to other discrete TGV models (Appendix \ref{sec:alg_comp}). More precisely, we need   
                \begin{multline*}
                  42n^2N_1N_2+15n^2N_1N_2(\log (nN_1)+\log (nN_2)) \\+2N_1N_2(\log N_1 + \log N_2)
                \end{multline*}
                floating-point operations (flops) for denoising problems through primal-dual algorithms. On the other hand, this interpolation nature can reduce the fine edge artifacts that appear in the image, whereas models defined on grids by the size of standard grid domains cannot handle such effects.

		Another discrete second-order total generalized variation (TGV) model is designed for piecewise constant functions on general triangular meshes \cite{Lukas}. Let $\Omega$ be a two-dimensional polygonal domain covered by a mesh of non-degenerate triangular cells $T$ and interior edges $E$. The discontinuous Lagrange finite element spaces of order $r\in \mathbb{N}\cup \{0\}$ (the non-negative integers) on such a mesh are defined by
		\begin{equation}
			\mathcal{D}\mathcal{G}_r(\Omega) = \{u\in L^2(\Omega): u|_T\in P_r(T)\},
		\end{equation}
		where $P_r(T)$ denotes the space of bivariate polynomials of degree at most $r$.
		This model is a discretization of non-symmetric TGV \cite{bredies}, which is defined for piecewise constant $u\in \mathcal{D}\mathcal{G}_0(\Omega):$
		\begin{multline}
                  \text{FETGV}^2_{(\alpha_0, \alpha_1)}(u) = \min_{w\in\mathcal{R}\mathcal{T}_0(\Omega)}  \alpha_1  \sum_E\|\llbracket  u \rrbracket  + h_Ew \cdot   \mu_+\|_{L^1(E)} \\ + \alpha_0\sum_T\int_T|\nabla w|_F dx 
                  +\alpha_0\sum_E \int_E \mathcal{I}_E\{|\llbracket  u \rrbracket|_2\} dS,
		\end{multline} 
		where $\mathcal{I}_E\{|\llbracket  u \rrbracket|_2\}$ denotes the linear interpolation of the pointwise 2-norm of the linear
		function $\llbracket  w \rrbracket = w_+ - w_-$ onto the space of linear functions along $E$. The interpolation points are the
		end points of $E$. $m_+, m_- \in\mathbb{R}^2$ are  the circumcenters of two adjacent triangles sharing the edge $E.$ In addition,   $\mu_+, \mu_-$ are the outward unit normal
		vectors on $E,$  $h_E=\|m_+-m_-\|_2,$ and $|\cdot|_F$ is the Frobenius norm.  Moreover, $$\mathcal{R}\mathcal{T}_0(\Omega) = \left\{v\in H(\text{div};\Omega) : v|_T\in P_0(T)^2+
		\left(\begin{array}{c}
			x\\
			y
		\end{array} \right) P_0(T)
		\right\},$$
		is the lowest-order Raviart-Thomas finite element space.\\
		This approach extends the applicability of the TGV functional to more general data structures than pixel images, particularly in finite element discretizations. However, applying this approach in regular pixel meshes for denoising yields results similar to the classic discrete TGV.}
	\section{The Proposed Discretization of the Second-Order TGV}
	To establish the new discrete TGV functional, we first discuss the required ``building blocks''.
	\begin{enumerate}
		\item{\it{Staggered grid domains of the discrete images:}}
		Staggered grids are defined. These sets are essential to define the elementary operators required by the new discrete TGV.
		\item {\it{Finite-difference operators:}}
		For a given image defined on a staggered grid domain, we explain how {to} determine the staggered grid domain for the images {resulting} from applying finite-difference and averaging operators to the given image. Consequently, discrete differentiation %
		operators on different staggered grid domains are defined, in particular
		primal first- and second-order discrete derivatives ($\mathcal{D}^{new}$, $\mathcal{E}^{new}$, $\mathcal{D}^{2new}$). Boundary conditions and grid domains of the images resulting from the new primal operators are determined.
		\item {\it{Dual difference operators:}} The adjoint operators of ($-\mathcal{D}^{new}$, $-\mathcal{E}^{new}$, $\mathcal{D}^{2new}$) which are first and second-order divergence operators ($\text{div}^{new}$, $\text{Div}^{new},$ $\text{div}^{2new}$) are derived by
		enforcing a discrete Gauss--Green theorem. %
		In particular, the associated boundary conditions and grid domains for the images resulting from the new dual operators are determined.
		\item {\it{Grid interpolation}:} In order to design a discretization for TGV with some rotational invariance properties, domain  conversion operators are defined. The staggered grid domains and boundary conditions of images obtained from these operators and their duals are studied.
		\item {\it{Proposed model and its Fenchel--Rockafellar dual:}} The new proposed discrete TGV model is formulated, and a dual formulation is provided for this model.
	\end{enumerate}
	The ``building blocks'' will be realized as follows.
	\subsection{Staggered Grid Domains of the Discrete Images} \label{sec:grids}
	We start with introducing the relevant staggered grid sets.
	\begin{definition} \label{def1} For $N_1,N_2\in\mathbb{N},$ we define the following grid sets:
		\begin{enumerate}
			\item $A_{\bullet}=\{1,\ldots,N_1\}\times \{1,\ldots,N_2\}$,
			\item $A_{\leftrightarrow}=\{\frac{1}{2},\frac{3}{2},\ldots, N_1+\frac{1}{2}\}\times\{1,\ldots, N_2\},$
			\item $A_{\updownarrow}=\{1,\ldots, N_1\}\times \{\frac{1}{2},\frac{3}{2},\ldots, N_2+\frac{1}{2}\},$
			\item $\bar{A}^x_{\bullet}=\{0,1,\ldots,N_1,N_1+1\}\times \{1,\ldots,N_2\},$
			\item $\bar{A}^y_{\bullet}=\{1,\ldots,N_1\}\times \{0,1,\ldots,N_2,N_2+1\},$
			\item $A_{\times}=\{\frac{1}{2},\frac{3}{2},\ldots, N_1+\frac{1}{2}\}\times \{\frac{1}{2},\frac{3}{2},\ldots, N_2+\frac{1}{2}\}.$
		\end{enumerate}
		See Figure \ref{grids} for an illustration. Moreover, we define the following spaces of discrete functions:\\
                \begin{align*}
                  \mathcal{U}_\bullet&=\{u:A_\bullet\rightarrow\mathbb{R}\}, & \mathcal{U}_{\leftrightarrow}&=\{u:A_{\leftrightarrow}\rightarrow\mathbb{R}\}, \\ \mathcal{U}_{\updownarrow}&=\{u:A_{\updownarrow}\rightarrow\mathbb{R}\}, &
                  \bar{\mathcal{U}}_\bullet^x&=\{u:\bar{A}_\bullet^x\rightarrow\mathbb{R}\}, \\ \bar{\mathcal{U}}_\bullet^y&=\{u:\bar{A}_\bullet^y\rightarrow\mathbb{R}\},  & \mathcal{U}_{\times}&=\{u:A_{\times}\rightarrow\mathbb{R}\}.
                \end{align*}
	\end{definition}
	\pgfdeclarelayer{background}
	\pgfsetlayers{background,main}
	\tikzstyle{vertex}=[arrow,fill=black!25,minimum size=20pt,inner sep=0pt]
	\tikzstyle{vertexx}=[circle,fill=white!25,minimum size=0pt,inner sep=0pt]
	\tikzstyle{selected vertex} = [vertexx, fill=red!24]
	\tikzstyle{edge} = [draw]
	\tikzstyle{weight} = [font=\small]
	\tikzstyle{selected edge} = [draw,line width=5pt,-,red!25]
	\tikzstyle{ignored edge} = [draw,line width=5pt,-,black!25]
	\begin{figure} \fontsize{8}{9.5}\selectfont \centering
		\begin{tabular}{ccc}
			\scalebox{0.7}{\begin{tikzpicture}[scale=.8, auto,swap]
					\foreach \pos/\name in {{(0,0)/a}, {(0,-1)/zz},{(1.5,.5)/aa},{(1.5,1.5)/aaa},
						{(1.5,.25)/aa1},{(1.5,1.2)/aaa1}, {(1.5,1)/ab}, {(1.5,1.2)/ab2}, {(1.51,1)/ab1}, {(2,1.5)/ac}, {(2,1.8)/ac2}, {(2,1.65)/ac3}, {(2,1.51)/ac1}, {(0,1)/b}, {(0,2)/c},
						{(0,3)/d}, {(1,0)/e}, {(1,-1)/ee}, {(1,1)/f}, {(1,2)/g}, {(1,3)/h}, {(2,0)/i}, {(2,1)/j},{(2.5,.65)/j1}, {(2.5,.8)/j2}, {(2,2)/k}, {(1.5,2.4)/k1}, {(1.5,2.2)/k2}, {(2,3)/l}, {(3,0)/m}, {(3,1)/n}, {(3,2)/o}, {(3,3)/p},
						{(.5,.5)/q1}, {(.5,1.5)/q2}, {(.5,2.5)/q3}, {(1.5,2.5)/q4}, {(2.5,.5)/q5}, {(2.5,1.5)/q6}, {(2.5,2.5)/q7}, {(1,1.5)/r1}, {(1,1.51)/r10}, {(1,.5)/r2}, {(1,.51)/r20}, {(2,.5)/r3}, {(.5,.2)/r4}, {(.5,1.2)/r5}, {(.5,2.2)/r6}, {(2,.51)/r30}, {(1.5,.2)/s1}, {(1.5,2.2)/s3},{(2.5,.2)/t1}, {(2.5,1.2)/t2}, {(2.5,2.2)/t3}}
					\node[vertexx] (\name) at \pos {};
					\foreach \source/ \dest /\weight in {a/b/, b/c/,c/d/,d/h/,
						h/g/, g/f/,f/e/,
						a/e/,b/f/,
						c/g/,d/h/, e/i/, i/j/, g/k/, l/k/,l/h/,
						k/j/, j/f/, i/m/, m/n/, n/o/, o/p/, p/l/, k/o/, j/n/}
					\path[edge] (\source) -- node[weight] {$\weight$} (\dest);
					\foreach \source/ \dest /\weight in {aa/aa/}
					\draw[edge](\source)--
					node[]{}(\dest) node {\color{blue}{\Large{${\bullet}$}}};
					\foreach \source/ \dest /\weight in {aaa/aaa/}
					\draw[edge](\source)--
					node[]{}(\dest) node {\color{blue}{\Large{$\bullet$}}};
					\foreach \source/ \dest /\weight in {aaa1/aaa1/}
					\foreach \source/ \dest /\weight in {r4/r4/}
					\foreach \source/ \dest /\weight in {r5/r5/}
					\foreach \source/ \dest /\weight in {r6/r6/}
					\foreach \source/ \dest /\weight in {t1/t1/}
					\foreach \source/ \dest /\weight in {t3/t3/}
					\foreach \source/ \dest /\weight in {s1/s1/}
					\foreach \source/ \dest /\weight in {t2/t2/}
					\foreach \source/ \dest /\weight in {s3/s3/}
					\foreach \source/ \dest /\weight in {q1,q2,q3,q4,q5,q6,q7}
					\draw[edge](\source)--
					node[]{}(\dest) node {\color{blue}{\Large{$\bullet$}}};
			\end{tikzpicture}}
			&
			\scalebox{0.7}{\begin{tikzpicture}[scale=.8, auto,swap]
					\foreach \pos/\name in {{(0,0)/a}, {(0,.5)/zz},{(0,2.5)/zz1},{(0,1.5)/zz2},{(1.5,.5)/aa},{(1.5,1.5)/aaa},
						{(1.5,.25)/aa1},{(1.5,1.3)/aaa1}, {(1.5,1)/ab}, {(1.5,1.3)/ab2}, {(1.51,1)/ab1}, {(2,1.5)/ac}, {(2,1.8)/ac2}, {(2,1.65)/ac3}, {(2,1.51)/ac1}, {(0,1)/b}, {(0,2)/c},
						{(0,3)/d}, {(1,0)/e}, {(1,-1)/ee}, {(1,1)/f}, {(1,2)/g}, {(1,3)/h}, {(2,0)/i}, {(2,1)/j},{(2.5,.65)/j1}, {(2.5,.8)/j2}, {(2,2)/k}, {(1.5,2.4)/k1}, {(1.5,2.2)/k2}, {(2,3)/l}, {(3,0)/m}, {(3,1)/n}, {(3,2)/o}, {(3,3)/p},{(1,.5)/p0}, {(1,1.5)/p1}, {(1,2.5)/p2}, {(2,0.5)/p00}, {(2,1.5)/p11},
						{(2,2.5)/p22}, {(3,.5)/p000}, {(3,1.5)/p111}, {(3,2.5)/p222},
						{(.5,.5)/q1}, {(.5,1.5)/q2}, {(.5,2.5)/q3}, {(1.5,2.5)/q4}, {(2.5,.5)/q5}, {(2.5,1.5)/q6}, {(2.5,2.5)/q7}, {(1,1.5)/r1}, {(1,1.51)/r10}, {(1,.5)/r2}, {(1,.51)/r20}, {(2,.5)/r3}, {(.5,.3)/r4}, {(.5,1.3)/r5}, {(1,2.7)/r6}, {(2,.51)/r30}, {(1.5,.3)/s1}, {(1.5,2.3)/s3},{(2.5,.3)/t1}, {(2.5,1.3)/t2}, {(2.5,2.3)/t3}}
					\node[vertexx] (\name) at \pos {};
					\foreach \source/ \dest /\weight in {a/b/, b/c/,c/d/,d/h/,
						h/g/, g/f/,f/e/,
						a/e/,b/f/,
						c/g/,d/h/, e/i/, i/j/, g/k/, l/k/,l/h/,
						k/j/, j/f/, i/m/, m/n/, n/o/, o/p/, p/l/, k/o/, j/n/}
					\path[edge] (\source) -- node[weight] {$\weight$} (\dest);
					\foreach \source/ \dest /\weight in {aaa1/aaa1/}
					\foreach \source/ \dest /\weight in {r4/r4/}
					\foreach \source/ \dest /\weight in {r5/r5/}
					\foreach \source/ \dest /\weight in {r6/r6/}
					\foreach \source/ \dest /\weight in {t1/t1/}
					\foreach \source/ \dest /\weight in {t3/t3/}
					\foreach \source/ \dest /\weight in {s1/s1/}
					\foreach \source/ \dest /\weight in {t2/t2/}
					\foreach \source/ \dest /\weight in {s3/s3/}
					\foreach \source/ \dest /\weight in {p0,p1,p2,zz,zz1,zz2,p00,p11,p22,p000,p111,p222}
					\draw[edge](\source)--
					node[]{}(\dest) node {\color{purple}{{\Large{$\bullet$}}}};
					\foreach \source/ \dest /\weight in {p0,p1,p2,p00,p11,p22,p000,p111,p222}
					\draw[edge](\source)--
					node[]{}(\dest) node {\color{purple}{{\tiny{$\bullet$}}}};
			\end{tikzpicture}}
			&
			\scalebox{0.7}{\begin{tikzpicture}[scale=.8, auto,swap]
					\foreach \pos/\name in {{(0,0)/a}, {(.5,3)/zz},{(1.5,3)/zz1}, {(2.5,3)/zz2},{(1.5,.5)/aa},{(1.5,1.5)/aaa},
						{(1.5,.25)/aa1},{(1.5,1.3)/aaa1}, {(1.5,1)/ab}, {(1.5,1.3)/ab2}, {(1.51,1)/ab1}, {(2,1.5)/ac}, {(2,1.8)/ac2}, {(2,1.65)/ac3}, {(2,1.51)/ac1}, {(0,1)/b}, {(0,2)/c},
						{(0,3)/d}, {(1,0)/e}, {(1,-1)/ee}, {(1,1)/f}, {(1,2)/g}, {(1,3)/h}, {(2,0)/i}, {(2,1)/j},{(2.5,.65)/j1}, {(2.5,.8)/j2}, {(2,2)/k}, {(1.5,2.4)/k1}, {(1.5,2.2)/k2}, {(2,3)/l}, {(3,0)/m}, {(3,1)/n}, {(3,2)/o}, {(3,3)/p},{(.5,0)/p0}, {(.5,1)/p1}, {(.5,2)/p2}, {(1.5,0)/p00}, {(1.5,1)/p11},
						{(1.5,2)/p22}, {(2.5,0)/p000}, {(2.5,1)/p111}, {(2.5,2)/p222},
						{(.5,.5)/q1}, {(.5,1.5)/q2}, {(.5,2.5)/q3}, {(1.5,2.5)/q4}, {(2.5,.5)/q5}, {(2.5,1.5)/q6}, {(2.5,2.5)/q7}, {(1,1.5)/r1}, {(1,1.51)/r10}, {(1,.5)/r2}, {(1,.51)/r20}, {(2,.5)/r3}, {(.5,.3)/r4}, {(.5,1.3)/r5}, {(.3,2.5)/r6}, {(2,.51)/r30}, {(1.5,.3)/s1}, {(1.5,2.3)/s3},{(2.5,.3)/t1}, {(2.5,1.3)/t2}, {(2.5,2.3)/t3}}
					\node[vertexx] (\name) at \pos {};
					\foreach \source/ \dest /\weight in {a/b/, b/c/,c/d/,d/h/,
						h/g/, g/f/,f/e/,
						a/e/,b/f/,
						c/g/,d/h/, e/i/, i/j/, g/k/, l/k/,l/h/,
						k/j/, j/f/, i/m/, m/n/, n/o/, o/p/, p/l/, k/o/, j/n/}
					\path[edge] (\source) -- node[weight] {$\weight$} (\dest);
					\foreach \source/ \dest /\weight in {aaa1/aaa1/}
					\foreach \source/ \dest /\weight in {r4/r4/}
					\foreach \source/ \dest /\weight in {r5/r5/}
					\foreach \source/ \dest /\weight in {r6/r6/}
					\foreach \source/ \dest /\weight in {t1/t1/}
					\foreach \source/ \dest /\weight in {t3/t3/}
					\foreach \source/ \dest /\weight in {s1/s1/}
					\foreach \source/ \dest /\weight in {t2/t2/}
					\foreach \source/ \dest /\weight in {s3/s3/}
					\foreach \source/ \dest /\weight in {p0,p1,p2,p00,p11,p22,p000,p111,p222,zz,zz1,zz2}
					\draw[edge](\source)--
					node[]{}(\dest) node {\color{teal}{{\Large{${\bullet}$}}}};
					\foreach \source/ \dest /\weight in {p0,p1,p2,p00,p11,p22,p000,p111,p222}
					\draw[edge](\source)--
					node[]{}(\dest) node {\color{teal}{{\tiny{$\bullet$}}}};
                                      \end{tikzpicture}} \\[-3mm]
                  			$(a)$                                      & $(b)$ & $(c)$ \\[3mm]
			\scalebox{0.7}{\begin{tikzpicture}[scale=.8, auto,swap]
					\foreach \pos/\name in {{(0,0)/a}, {(-.5,.5)/zz},{(-.5,1.5)/zz1}, {(-.5,2.5)/zz2}, {(3.5,.5)/zz3},{(3.5,1.5)/zz4},{(3.5,2.5)/zz5},{(1.5,.5)/aa},{(1.5,1.5)/aaa},
						{(1.5,.25)/aa1},{(1.5,1.2)/aaa1}, {(1.5,1)/ab}, {(1.5,1.2)/ab2}, {(1.51,1)/ab1}, {(2,1.5)/ac}, {(2,1.8)/ac2}, {(2,1.65)/ac3}, {(2,1.51)/ac1}, {(0,1)/b}, {(0,2)/c},
						{(0,3)/d}, {(1,0)/e}, {(1,-1)/ee}, {(1,1)/f}, {(1,2)/g}, {(1,3)/h}, {(2,0)/i}, {(2,1)/j},{(2.5,.65)/j1}, {(2.5,.8)/j2}, {(2,2)/k}, {(1.5,2.4)/k1}, {(1.5,2.2)/k2}, {(2,3)/l}, {(3,0)/m}, {(3,1)/n}, {(3,2)/o}, {(3,3)/p},
						{(.5,.5)/q1}, {(.5,1.5)/q2}, {(.5,2.5)/q3}, {(1.5,2.5)/q4}, {(2.5,.5)/q5}, {(2.5,1.5)/q6}, {(2.5,2.5)/q7}, {(1,1.5)/r1}, {(1,1.51)/r10}, {(1,.5)/r2}, {(1,.51)/r20}, {(2,.5)/r3}, {(.5,.2)/r4}, {(.5,1.2)/r5}, {(.5,2.2)/r6}, {(2,.51)/r30}, {(1.5,.2)/s1}, {(1.5,2.2)/s3},{(2.5,.2)/t1}, {(2.5,1.2)/t2}, {(2.5,2.2)/t3}}
					\node[vertexx] (\name) at \pos {};
					\foreach \source/ \dest /\weight in {a/b/, b/c/,c/d/,d/h/,
						h/g/, g/f/,f/e/,
						a/e/,b/f/,
						c/g/,d/h/, e/i/, i/j/, g/k/, l/k/,l/h/,
						k/j/, j/f/, i/m/, m/n/, n/o/, o/p/, p/l/, k/o/, j/n/}
					\path[edge] (\source) -- node[weight] {$\weight$} (\dest);
					\foreach \source/ \dest /\weight in {aa/aa/}
					\draw[edge](\source)--
					node[]{}(\dest) node {\color{blue}{\Large{${\bullet}$}}};
					\foreach \source/ \dest /\weight in {aaa/aaa/}
					\draw[edge](\source)--
					node[]{}(\dest) node {\color{blue}{\Large{$\bullet$}}};
					\foreach \source/ \dest /\weight in {aaa1/aaa1/}
					\foreach \source/ \dest /\weight in {r4/r4/}
					\foreach \source/ \dest /\weight in {r5/r5/}
					\foreach \source/ \dest /\weight in {r6/r6/}
					\foreach \source/ \dest /\weight in {t1/t1/}
					\foreach \source/ \dest /\weight in {t3/t3/}
					\foreach \source/ \dest /\weight in {s1/s1/}
					\foreach \source/ \dest /\weight in {t2/t2/}
					\foreach \source/ \dest /\weight in {s3/s3/}
					\foreach \source/ \dest /\weight in {q1,q2,q3,q4,q5,q6,q7,zz,zz1,zz2,zz3,zz4,zz5}
					\draw[edge](\source)--
					node[]{}(\dest) node {\color{blue}{\Large{$\bullet$}}};
			\end{tikzpicture}}
			&
			\scalebox{0.7}{\begin{tikzpicture}[scale=.8, auto,swap]
					\foreach \pos/\name in {{(0,0)/a}, {(.5,3.5)/zz},{(1.5,3.5)/zz1},{(2.5,3.5)/zz2},{(.5,-.5)/zz3}, {(1.5,-.5)/zz4},{(2.5,-.5)/zz5},{(1.5,.5)/aa},{(1.5,1.5)/aaa},
						{(1.5,.25)/aa1},{(1.5,1.2)/aaa1}, {(1.5,1)/ab}, {(1.5,1.2)/ab2}, {(1.51,1)/ab1}, {(2,1.5)/ac}, {(2,1.8)/ac2}, {(2,1.65)/ac3}, {(2,1.51)/ac1}, {(0,1)/b}, {(0,2)/c},
						{(0,3)/d}, {(1,0)/e}, {(1,-1)/ee}, {(1,1)/f}, {(1,2)/g}, {(1,3)/h}, {(2,0)/i}, {(2,1)/j},{(2.5,.65)/j1}, {(2.5,.8)/j2}, {(2,2)/k}, {(1.5,2.4)/k1}, {(1.5,2.2)/k2}, {(2,3)/l}, {(3,0)/m}, {(3,1)/n}, {(3,2)/o}, {(3,3)/p},
						{(.5,.5)/q1}, {(.5,1.5)/q2}, {(.5,2.5)/q3}, {(1.5,2.5)/q4}, {(2.5,.5)/q5}, {(2.5,1.5)/q6}, {(2.5,2.5)/q7}, {(1,1.5)/r1}, {(1,1.51)/r10}, {(1,.5)/r2}, {(1,.51)/r20}, {(2,.5)/r3}, {(.5,.2)/r4}, {(.5,1.2)/r5}, {(.5,2.2)/r6}, {(2,.51)/r30}, {(1.5,.2)/s1}, {(1.5,2.2)/s3},{(2.5,.2)/t1}, {(2.5,1.2)/t2}, {(2.5,2.2)/t3}}
					\node[vertexx] (\name) at \pos {};
					\foreach \source/ \dest /\weight in {a/b/, b/c/,c/d/,d/h/,
						h/g/, g/f/,f/e/,
						a/e/,b/f/,
						c/g/,d/h/, e/i/, i/j/, g/k/, l/k/,l/h/,
						k/j/, j/f/, i/m/, m/n/, n/o/, o/p/, p/l/, k/o/, j/n/}
					\path[edge] (\source) -- node[weight] {$\weight$} (\dest);
					\foreach \source/ \dest /\weight in {aa/aa/}
					\draw[edge](\source)--
					node[]{}(\dest) node {\color{black}{\Large{${\bullet}$}}};
					\foreach \source/ \dest /\weight in {aaa/aaa/}
					\draw[edge](\source)--
					node[]{}(\dest) node {\color{black}{\Large{$\bullet$}}};
					\foreach \source/ \dest /\weight in {aaa1/aaa1/}
					\foreach \source/ \dest /\weight in {r4/r4/}
					\foreach \source/ \dest /\weight in {r5/r5/}
					\foreach \source/ \dest /\weight in {r6/r6/}
					\foreach \source/ \dest /\weight in {t1/t1/}
					\foreach \source/ \dest /\weight in {t3/t3/}
					\foreach \source/ \dest /\weight in {s1/s1/}
					\foreach \source/ \dest /\weight in {t2/t2/}
					\foreach \source/ \dest /\weight in {s3/s3/}
					\foreach \source/ \dest /\weight in {q1,q2,q3,q4,q5,q6,q7,zz1,zz,zz2,zz3,zz4,zz5}
					\draw[edge](\source)--
					node[]{}(\dest) node {\color{black}{\Large{$\bullet$}}};
			\end{tikzpicture}}
			&
			\scalebox{0.7}{\begin{tikzpicture}[scale=.8, auto,swap]
					\foreach \pos/\name in {{(0,0)/a}, {(0,3)/zz},{(1,3)/zz1},{(2,3)/zz2}, {(3,3)/zz3},{(3,0)/zz4},{(3,1)/zz5}, {(3,2)/zz6},{(1.5,.5)/aa},{(1.5,1.5)/aaa},
						{(1.5,.25)/aa1},{(1.5,1.3)/aaa1}, {(1.5,1)/ab}, {(1.5,1.3)/ab2}, {(1.51,1)/ab1}, {(2,1.5)/ac}, {(2,1.8)/ac2}, {(2,1.65)/ac3}, {(2,1.51)/ac1}, {(0,1)/b}, {(0,2)/c},
						{(0,3)/d}, {(1,0)/e}, {(1,-1)/ee}, {(1,1)/f}, {(1,2)/g}, {(1,3)/h}, {(2,0)/i}, {(2,1)/j},{(2.5,.65)/j1}, {(2.5,.8)/j2}, {(2,2)/k}, {(1.5,2.4)/k1}, {(1.5,2.2)/k2}, {(2,3)/l}, {(3,0)/m}, {(3,1)/n}, {(3,2)/o}, {(3,3)/p},{(.5,0)/p0}, {(.5,1)/p1}, {(.5,2)/p2}, {(1.5,0)/p00}, {(1.5,1)/p11},
						{(1.5,2)/p22}, {(2.5,0)/p000}, {(2.5,1)/p111}, {(2.5,2)/p222},
						{(.5,.5)/q1}, {(.5,1.5)/q2}, {(.5,2.5)/q3}, {(1.5,2.5)/q4}, {(2.5,.5)/q5}, {(2.5,1.5)/q6}, {(2.5,2.5)/q7}, {(1,1.5)/r1}, {(1,1.51)/r10}, {(1,.5)/r2}, {(1,.51)/r20}, {(2,.5)/r3}, {(.5,.3)/r4}, {(.5,1.3)/r5}, {(.5,2.3)/r6}, {(2,.51)/r30}, {(1.5,.3)/s1}, {(1.5,2.3)/s3},{(2.5,.3)/t1}, {(2.5,1.3)/t2}, {(2.5,2.3)/t3}}
					\node[vertexx] (\name) at \pos {};
					\foreach \source/ \dest /\weight in {a/b/, b/c/,c/d/,d/h/,
						h/g/, g/f/,f/e/,
						a/e/,b/f/,
						c/g/,d/h/, e/i/, i/j/, g/k/, l/k/,l/h/,
						k/j/, j/f/, i/m/, m/n/, n/o/, o/p/, p/l/, k/o/, j/n/}
					\path[edge] (\source) -- node[weight] {$\weight$} (\dest);
					\foreach \source/ \dest /\weight in {aaa1/aaa1/}
					\foreach \source/ \dest /\weight in {r4/r4/}
					\foreach \source/ \dest /\weight in {r5/r5/}
					\foreach \source/ \dest /\weight in {r6/r6/}
					\foreach \source/ \dest /\weight in {t1/t1/}
					\foreach \source/ \dest /\weight in {t3/t3/}
					\foreach \source/ \dest /\weight in {s1/s1/}
					\foreach \source/ \dest /\weight in {t2/t2/}
					\foreach \source/ \dest /\weight in {s3/s3/}
					\foreach \source/ \dest /\weight in {a,b,c,e,f,g,i,j,k,zz,zz1,zz2,zz3,zz3,zz4,zz5,zz6}
					\draw[edge](\source)--
					node[]{}(\dest) node {\color{magenta}{{\Large{$\bullet$}}}};
					\foreach \source/ \dest /\weight in {a,b,c,e,f,g,i,j,k}
					\draw[edge](\source)--
					node[]{}(\dest) node {\color{magenta}{{\tiny{$\bullet$}}}};
			\end{tikzpicture}}                                         \\
  $(d)$ & $(e)$ & $(f)$
		\end{tabular}
		\caption{Illustration of the grid sets introduced in Definition \ref{def1}: $(a)$ ${A_{\bullet}},$ $(b)$ ${A}_{\leftrightarrow}$, $(c)$ ${A}_{\updownarrow}$, $(d)$ $\bar{A}^x_{\bullet}$, $(e)$
			$\bar{A}^y_{\bullet},$ $(f)$ $A_{\times}$.}\label{grids}
	\end{figure}
	\subsection{Finite-Difference operators} \label{sec:diff_op}
	In the following, differentiation and averaging techniques are introduced to determine the grid domain of an image. These techniques are then applied to obtain grid domains of finite-difference operators, which are essential for designing the new discrete TGV in the sequel.
	\subsubsection{Principles to Assign Suitable Grids as Domains for Discrete Images}
	Hereafter, we assume that the domain of a given discrete image $u\in\mathbb{R}^{N_1\times N_2}$ is $A_{\bullet}$, i.e., $u:A_{\bullet}\rightarrow \mathbb{R}.$\\
	The domain of discrete images, obtained from some linear operators, can be determined based on two principles: numerical approximation of derivatives and averaging via convex combinations of some objects. We present these two principles along with examples, which are essential for the sequel of the paper. The first principle allows us to find natural discrete domains for the images obtained by derivative operators such as $\mathcal{D}, \mathcal{E}, \text{div}, \text{Div}, \mathcal{D}^2,$ and $\text{div}^2$ (see the definitions of these operators in Subsection \ref{sec25}). The second principle allows us to define grid domains associated with averaging operators, such as $L_\bullet, L_\leftrightarrow,$ and $L_\updownarrow$ (these operators will be defined in Subsection \ref{sec35}). We need both principles to determine the correct grid domains, and they are explained as follows:
	\begin{tec}\label{tec1} (Numerical differentiation) The location associated with the difference of two elements in a grid is the center of the locations of these two elements. In other words, the associated grid point for $u(n_1,n_2)-u(m_1,m_2)$ is $(\frac{n_1+m_1}{2}, \frac{n_2+m_2}{2}).$
	\end{tec}
	\begin{tec}\label{tec2} (Numerical integration and averaging) The convex combination of elements in some grid domains is located at the respective convex combination of the elements' locations. In other words, the value $\sum_{j=1}^k \alpha_j u(n_1^j,n_2^j)$, where $\alpha_j\geq 0$ and $\sum_j\alpha_j=1$, is associated with the grid point $\sum_{j=1}^k \alpha_j (n_1^j,n_2^j)$.
	\end{tec}
	\begin{ex}\label{ex2} Suppose $u:A_{\bullet} \rightarrow \mathbb{R}$, then using Principle \ref{tec1}, the location of the element
		$u(2,2)-u(1,2)$ is at the point $\left(\frac{1}{2}(2+1), \frac{1}{2}(2+2)\right)=(\frac{3}{2},2).$ \\
		Moreover, assume $v:A_{\leftrightarrow}\rightarrow\mathbb{R}$, then using Principle \ref{tec2}, the location of %
		$\frac{1}{4}\left(v(1,\frac{3}{2})+v(1,\frac{5}{2})+v(2,\frac{3}{2})+v(2,\frac{5}{2})\right)$ is at the point $\left(\frac{1}{4}(1+1+2+2), \frac{1}{4}(\frac{3}{2}+\frac{5}{2}+\frac{3}{2}+\frac{5}{2})\right)=(\frac{3}{2},2)$ (see Figure \ref{fig2}).
	\end{ex}
	\begin{figure} \fontsize{8}{9.5}\selectfont \centering
		\begin{tabular}{cc}
			\scalebox{0.7}{\begin{tikzpicture}[scale=1.2, auto,swap]
					\foreach \pos/\name in {{(0,0)/a}, {(0,-1)/zz},{(1.5,.5)/aa},{(1.5,1.5)/aaa},
						{(1.5,.25)/aa1},{(1.5,1.3)/kk}, {(1.5,1.98)/kk2}, {(1.5,1.8)/kk3},{(1.5,1)/ab}, {(1.5,1.2)/ab2}, {(1.51,1)/ab1}, {(2,1.5)/ac}, {(2,1.8)/ac2}, {(2,1.65)/ac3}, {(2,1.51)/ac1}, {(0,1)/b}, {(0,2)/c},
						{(0,3)/d}, {(1,0)/e}, {(1,-1)/ee}, {(1,1)/f}, {(1,2)/g}, {(1,3)/h}, {(2,0)/i}, {(2,1)/j},{(2.5,.65)/j1}, {(2.5,.8)/j2}, {(2,2)/k}, {(1.5,2.3)/k1}, {(1.5,2.2)/k2}, {(2,3)/l}, {(3,0)/m}, {(3,1)/n}, {(3,2)/o}, {(3,3)/p},
						{(.5,.5)/q1}, {(.5,1.5)/q2}, {(.5,2.5)/q3}, {(1.5,2.5)/q4}, {(2.5,.5)/q5}, {(2.5,1.5)/q6}, {(2.5,2.5)/q7}, {(1,1.5)/r1}, {(1,1.51)/r10}, {(1,.5)/r2}, {(1,.51)/r20}, {(2,.5)/r3}, {(.5,.2)/r4}, {(.5,1.2)/r5}, {(.5,2.2)/r6}, {(2,.51)/r30}, {(1.5,.2)/s1}, {(1.5,2.2)/s3},{(2.5,.2)/t1}, {(2.5,1.2)/t2}, {(2.5,2.2)/t3}}
					\node[vertexx] (\name) at \pos {};
					\foreach \source/ \dest /\weight in {a/b/, b/c/,c/d/,d/h/,
						h/g/, g/f/,f/e/,
						a/e/,b/f/,
						c/g/,d/h/, e/i/, i/j/, g/k/, l/k/,l/h/,
						k/j/, j/f/, i/m/, m/n/, n/o/, o/p/, p/l/, k/o/, j/n/}
					\path[edge] (\source) -- node[weight] {$\weight$} (\dest);
					\foreach \source/ \dest /\weight in {aa/aa/}
					\draw[edge](\source)--
					node[]{}(\dest) node {\color{blue}{\Large{${\bullet}$}}};
					\foreach \source/ \dest /\weight in {aaa/aaa/}
					\draw[edge](\source)--
					node[]{}(\dest) node {\color{blue}{\Large{$\bullet$}}};
					\foreach \source/ \dest /\weight in {k1}
					\draw[edge](\source)--
					node[]{}(\dest) node {\color{blue}{\fontsize{6.5}{7.5}\selectfont{\scalebox{1.1}{$u(2,1)$}}}};
					\foreach \source/ \dest /\weight in {kk}
					\draw[edge](\source)--
					node[]{}(\dest) node {\color{blue}{\fontsize{6.5}{7.5}\selectfont{\scalebox{1.1}{$u(2,2)$}}}};
					\foreach \source/ \dest /\weight in {kk2}
					\draw[edge](\source)--
					node[]{}(\dest) node {\color{red}{\Large{$\circ$}}};
					\foreach \source/ \dest /\weight in {r4/r4/}
					\foreach \source/ \dest /\weight in {r5/r5/}
					\foreach \source/ \dest /\weight in {r6/r6/}
					\foreach \source/ \dest /\weight in {t1/t1/}
					\foreach \source/ \dest /\weight in {t3/t3/}
					\foreach \source/ \dest /\weight in {s1/s1/}
					\foreach \source/ \dest /\weight in {t2/t2/}
					\foreach \source/ \dest /\weight in {s3/s3/}
					\foreach \source/ \dest /\weight in {q1,q2,q3,q4,q5,q6,q7}
					\draw[edge](\source)--
					node[]{}(\dest) node {\color{blue}{\Large{$\bullet$}}};
			\end{tikzpicture}}
			&
			\scalebox{0.7}{\begin{tikzpicture}[scale=1.2, auto,swap]
					\foreach \pos/\name in {{(0,0)/a}, {(0,.5)/zz},{(0,2.5)/zz1},{(0,1.5)/zz2},{(1.5,.5)/aa},{(1.5,1.5)/aaa},
						{(1.5,.25)/aa1},{(1.5,1.3)/aaa1}, {(1.5,1)/ab}, {(1.5,1.3)/ab2}, {(1.51,1)/ab1}, {(2,1.5)/ac}, {(2,1.8)/ac2}, {(2,1.65)/ac3}, {(2,1.51)/ac1}, {(0,1)/b}, {(0,2)/c},
						{(0,3)/d}, {(1,0)/e}, {(1,-1)/ee}, {(1,1)/f}, {(1,2)/g}, {(1,3)/h}, {(2,0)/i}, {(2,1)/j},{(2.5,.65)/j1}, {(2.5,.8)/j2}, {(2,2)/k}, {(1.5,2.4)/k1}, {(1.5,2.2)/k2}, {(2,3)/l}, {(3,0)/m}, {(3,1)/n}, {(3,2)/o}, {(3,3)/p},{(1,.5)/p0}, {(1,1.5)/p1}, {(1,2.5)/p2}, {(2,0.5)/p00}, {(2,1.5)/p11},
						{(2,2.5)/p22}, {(3,.5)/p000}, {(3,1.5)/p111}, {(3,2.5)/p222},
						{(.5,.5)/q1}, {(.5,1.5)/q2}, {(.5,2.5)/q3}, {(1.5,2.5)/q4}, {(2.5,.5)/q5}, {(2.5,1.5)/q6}, {(2.5,2.5)/q7}, {(1,1.5)/r1}, {(1,1.51)/r10}, {(1,.5)/r2}, {(1,.51)/r20}, {(2,.5)/r3}, {(.5,.3)/r4}, {(.5,1.3)/r5}, {(1,2.7)/r6}, {(2,.51)/r30}, {(1.5,.3)/s1}, {(1.5,2.3)/s3},{(2.5,.3)/t1}, {(2.5,1.3)/t2}, {(2.5,2.3)/t3}, {(1.5,1.98)/kk2},{(.6,2.5)/kk4},{(1.5,1.8)/kk3},{(1.6,2.5)/kk5},{(.6,1.5)/kk6},{(1.6,1.5)/kk7}}
					\node[vertexx] (\name) at \pos {};
					\foreach \source/ \dest /\weight in {a/b/, b/c/,c/d/,d/h/,
						h/g/, g/f/,f/e/,
						a/e/,b/f/,
						c/g/,d/h/, e/i/, i/j/, g/k/, l/k/,l/h/,
						k/j/, j/f/, i/m/, m/n/, n/o/, o/p/, p/l/, k/o/, j/n/}
					\path[edge] (\source) -- node[weight] {$\weight$} (\dest);
					\foreach \source/ \dest /\weight in {aaa1/aaa1/}
					\foreach \source/ \dest /\weight in {r4/r4/}
					\foreach \source/ \dest /\weight in {r5/r5/}
					\foreach \source/ \dest /\weight in {r6/r6/}
					\foreach \source/ \dest /\weight in {t1/t1/}
					\foreach \source/ \dest /\weight in {t3/t3/}
					\foreach \source/ \dest /\weight in {s1/s1/}
					\foreach \source/ \dest /\weight in {t2/t2/}
					\foreach \source/ \dest /\weight in {s3/s3/}
					\foreach \source/ \dest /\weight in {kk4}
					\draw[edge](\source)--
					node[]{}(\dest) node {\color{black}{\fontsize{6.5}{7.5}\selectfont{\scalebox{1.1}{$v(\frac{3}{2},1)\,\,\,$}}}};
					\foreach \source/ \dest /\weight in {kk5}
					\draw[edge](\source)--
					node[]{}(\dest) node {\color{black}{\fontsize{6.5}{7.5}\selectfont{\scalebox{1.1}{$v(\frac{5}{2},1)\,\,\,$}}}};
					\foreach \source/ \dest /\weight in {kk6}
					\draw[edge](\source)--
					node[]{}(\dest) node {\color{black}{\fontsize{6.5}{7.5}\selectfont{\scalebox{1.1}{$v(\frac{3}{2},2)\,\,\,$}}}};
					\foreach \source/ \dest /\weight in {kk7}
					\draw[edge](\source)--
					node[]{}(\dest) node {\color{black}{\fontsize{6.5}{7.5}\selectfont{\scalebox{1.1}{$v(\frac{5}{2},2)\,\,\,$}}}};
					\foreach \source/ \dest /\weight in {kk2}
					\draw[edge](\source)--
					node[]{}(\dest) node {\color{red}{\Large{$\circ$}}};
					\foreach \source/ \dest /\weight in {p0,p1,p2,zz,zz1,zz2,p00,p11,p22,p000,p111,p222}
					\draw[edge](\source)--
					node[]{}(\dest) node {\color{black}{{\Large{${\bullet}$}}}};
					\foreach \source/ \dest /\weight in {p0,p1,p2,p00,p11,p22,p000,p111,p222}
					\draw[edge](\source)--
					node[]{}(\dest) node {\color{black}{{\tiny{$\bullet$}}}};
			\end{tikzpicture}} \\[-2em]
			$(a)$ & $(b)$
		\end{tabular}
		\caption{Illustration of Example \ref{ex2}: $(a)$ Grid domain of $u:A_{\bullet}\rightarrow\mathbb{R}.$ The location of $u(2,2)-u(1,2)$ is marked by a red circle. $(b)$ Grid domain of  $v:A_{\leftrightarrow}\rightarrow\mathbb{R}$. The location of $\frac{1}{4}\left(v(1,\frac{3}{2})+v(1,\frac{5}{2})+v(2,\frac{3}{2})+v(2,\frac{5}{2})\right)$ is marked by a red circle.}
		\label{fig2}
	\end{figure}
	\subsubsection{Grid Domains and Boundary Conditions of the Finite-Difference Operators}
	In the following, elementary  difference operators are defined over some images with special grid domains and special boundary conditions. The properties of the images obtained from such difference operators containing their domains and boundary conditions are expressed. These images and their domains are employed to define the new discrete TGV in the upcoming subsections.\\
	\begin{definition}\label{def}
		The first- and second-order gradient operators used for the new discretization
		are defined as follows:
		\begin{enumerate}
			\item
			$
			\mathcal{D}^{new}= (\mathcal{D}^{new }_{x \bullet},\  \mathcal{D}^{new}_{y \bullet}): \mathcal{U}_\bullet\rightarrow \mathcal{U}_\leftrightarrow\times\mathcal{U}_\updownarrow,
			$
			\begin{gather}\label{for}
                          \begin{aligned}
                            & (\mathcal{D}^{new}u)_1(n_1,n_2)=\mathcal{D}^{new}_{x \bullet}u(n_1,n_2)\\ & \qquad =\left\{
                              \begin{array}{l}
                                u(n_1+\frac{1}{2},n_2)-u(n_1-\frac{1}{2},n_2), \\ \qquad\qquad \frac{3}{2}\leq n_1\leq N_1-\frac{1}{2}, 1\leq n_2\leq N_2, \\
                                0,                                           \qquad\quad \text{else},
                              \end{array}
                            \right.
                          \end{aligned}
\\ \label{for2}
\begin{aligned}
  & (\mathcal{D}^{new}u)_2(n_1,n_2)=\mathcal{D}^{new}_{y \bullet}u(n_1,n_2) \\ & \qquad =\left\{
    \begin{array}{l}
      u(n_1,n_2+\frac{1}{2})-u(n_1,n_2-\frac{1}{2}), \\ \qquad\qquad 1\leq n_1\leq N_1, \frac{3}{2}\leq n_2\leq N_2-\frac{1}{2}, \\
      0,                                             \qquad\quad \text{else},
    \end{array}
  \right.
\end{aligned}
			\end{gather}
			\item $\mathcal{E}^{new}= \left(
			\begin{array}{cc}
				\mathcal{D}^{new}_{x \leftrightarrow}            & 0                                             \\
				0                                                & \mathcal{D}^{new}_{y \updownarrow}            \\
				\frac{1}{2}\mathcal{D}^{new}_{y \leftrightarrow} & \frac{1}{2}\mathcal{D}^{new}_{x \updownarrow}
			\end{array}
			\right): \mathcal{U}_\leftrightarrow \times\mathcal{U}_\updownarrow\rightarrow \bar{\mathcal{U}}_{\bullet}^x\times \bar{\mathcal{U}}_{\bullet}^y\times \mathcal{U}_\times,$
			\begin{gather}\label{d+-}
                          \begin{aligned}
                            &(\mathcal{E}^{new}v)_1(n_1,n_2)=\mathcal{D}^{new}_{x \leftrightarrow}v_1(n_1,n_2) \\ & \qquad =\left\{
                              \begin{array}{l}
                                v_1(n_1+\frac{1}{2},n_2)-v_1(n_1-\frac{1}{2},n_2), \\ \qquad\qquad 1\leq n_1\leq N_1, 1\leq n_2\leq N_2, \\
                                0,                                                 \qquad\quad \text{else},
                              \end{array}
                            \right.
                          \end{aligned}
			\\ \label{d+-1}
                        \begin{aligned}
                          & (\mathcal{E}^{new}v)_2(n_1,n_2)=\mathcal{D}^{new}_{y \updownarrow}v_2(n_1,n_2) \\ & \qquad =\left\{
                                                                                                                \begin{array}{l}
                                                                                                                  v_2(n_1,n_2+\frac{1}{2})-v_2(n_1,n_2-\frac{1}{2}), \\ \qquad\qquad 1\leq n_1\leq N_1, 1\leq n_2\leq N_2, \\
                                                                                                                  0,                                                 \qquad\quad \text{else},
                                                                                                                \end{array}
                                                                                                                \right.
                        \end{aligned}
			\end{gather}
			$(\mathcal{E}^{new}v)_3= \frac{1}{2}(\mathcal{D}^{new}_{y \leftrightarrow}v_1+ \mathcal{D}^{new}_{x \updownarrow}v_2),$ where
                        \begin{equation}
                            \begin{aligned}
                              & \mathcal{D}^{new}_{x \updownarrow} v_2 (n_1,n_2) \\ & \qquad =\left\{
                                                                                      \begin{array}{l}
                                                                                        v_2(n_1+\frac{1}{2},n_2)-v_2(n_1-\frac{1}{2},n_2),  \\ \qquad\qquad \frac{3}{2}\leq n_1\leq N_1-\frac{1}{2}, \frac{1}{2}\leq n_2\leq N_2+\frac{1}{2}, \\
                                                                                        0,                                                 \qquad\quad \text{else},
                                                                                      \end{array}
                                                                                      \right.
                            \end{aligned}
                        \end{equation}
			and
			\begin{equation}\label{for3}
                          \begin{aligned}
                            &\mathcal{D}^{new}_{y \leftrightarrow} v_1(n_1,n_2) \\  & \qquad=\left\{
                            \begin{array}{l}
                              v_1(n_1,n_2+\frac{1}{2})-v_1(n_1,n_2-\frac{1}{2}), \\ \qquad\qquad \frac{1}{2}\leq n_1\leq N_1+\frac{1}{2}, \frac{3}{2}\leq n_2\leq N_2-\frac{1}{2}, \\
                              0,                                                 \qquad\quad \text{else}.
                            \end{array}
                            \right.
                          \end{aligned}
			\end{equation}
			\item
			$\mathcal{D}^{2new}:\mathcal{U}_\bullet\rightarrow \bar{\mathcal{U}}_{\bullet}^x\times \bar{\mathcal{U}}_{\bullet}^y\times \mathcal{U}_\times, \mathcal{D}^{2new}u= \mathcal{E}^{new}\mathcal{D}^{new} u.$
		\end{enumerate}
	\end{definition}
	\subsection{Divergences, Grid Domains, and Boundary Conditions}
        \label{sec:div}
	In the sequel, we need the dual of the operators in Definition \ref{def}.
	By requiring a discrete Gauss--Green theorem, the dual operators are obtained as follows:
	\begin{enumerate}
		\item $\text{div}^{new}= -(\mathcal{D}^{new})^*: \mathcal{U}_{\leftrightarrow}\times \mathcal{U}_{\updownarrow}\rightarrow \mathcal{U}_\bullet,$
		$\text{div}^{new} (v_1,v_2)= -\mathcal{D}^{new}_{x \leftrightarrow}v_1-\mathcal{D}^{new}_{y \updownarrow} v_2$, where
		\begin{gather}\label{d+-2}
                  \begin{aligned}
                    & \mathcal{D}^{new}_{x \leftrightarrow}v_1(n_1,n_2) \\ & \quad =\left\{
                    \begin{array}{l}
                      v_1(\frac{3}{2},n_2),                              \quad\qquad\ \ \ \;  n_1=1, 1\leq n_2\leq N_2,               \\
                      v_1(n_1+\frac{1}{2},n_2)-v_1(n_1-\frac{1}{2},n_2), \\ \qquad\qquad\quad 2\leq n_1\leq N_1-1, 1\leq n_2\leq N_2, \\
                      -v_1(N_1-\frac{1}{2},n_2),                         \quad  n_1=N_1, 1\leq n_2\leq N_2,
                    \end{array}
                    \right.
                  \end{aligned}
		\\ \label{d+-3}
                \begin{aligned}
                  &\mathcal{D}^{new}_{y \updownarrow}v_2(n_1,n_2) \\ & \quad =\left\{
                  \begin{array}{l}
                    v_2(n_1,\frac{3}{2}),                              \qquad\quad\ \ \ \, 1\leq n_1\leq N_1, n_2=1,               \\
                    v_2(n_1,n_2+\frac{1}{2})-v_2(n_1,n_2-\frac{1}{2}), \\ \qquad\qquad\quad 1\leq n_1\leq N_1, 2\leq n_2\leq N_2-1, \\
                    -v_2(n_1, N_2-\frac{1}{2}),                        \quad 1\leq n_1\leq N_1, n_2=N_2.
                  \end{array}
                  \right.
                \end{aligned}
		\end{gather}
		Note that although $\mathcal{D}^{new}_{x \leftrightarrow}$ and $\mathcal{D}^{new}_{y \updownarrow}$ share the same notation as the operators defined in~\eqref{d+-} and \eqref{d+-1}, their domains of definition differ. In the following we ensure that it is always clear from the context such that there is no chance of confusion.
		\item $\text{Div}^{new}= -(\mathcal{E}^{new})^*: \bar{\mathcal{U}}_{\bullet}^x\times \bar{\mathcal{U}}_{\bullet}^y\times \mathcal{U}_\times\rightarrow\mathcal{U}_\leftrightarrow\times\mathcal{U}_\updownarrow,$ $\text{Div}^{new}v=\left(\begin{array}{c} \mathcal{D}^{new}_{x \bar{\bullet}}v_1+ \mathcal{D}^{new}_{y \times}v_3 \\
			\mathcal{D}^{new}_{y \bar{\bullet}}v_2+ \mathcal{D}^{new}_{x \times}v_3
		\end{array}
		\right),$
		where $\mathcal{D}^{new}_{x \bar{\bullet}}: \bar{\mathcal{U}}_\bullet^x \to \mathcal{U}_\leftrightarrow$ and $\mathcal{D}^{new}_{y \bar{\bullet}}: \bar{\mathcal{U}}_\bullet^x \to \mathcal{U}_{\updownarrow}$ are defined by
		\begin{gather}\label{d+-22}
                  \begin{aligned}
                    & \mathcal{D}^{new}_{x \bar{\bullet}}v_1(n_1,n_2) \\ &\quad =\left\{
                                                                           \begin{array}{l}
                                                                             -v_1(1,n_2),                                       \quad\ \ \ \, n_1=\frac{1}{2}, 1\leq n_2\leq N_2,                         \\
                                                                             v_1(n_1+\frac{1}{2},n_2)-v_1(n_1-\frac{1}{2},n_2), \\ \qquad\qquad\quad \frac{3}{2}\leq n_1\leq N_1-\frac{1}{2}, 1\leq n_2\leq N_2, \\
                                                                             v_1(N_1,n_2),                                      \qquad n_1=N_1+\frac{1}{2}, 1\leq n_2\leq N_2,         
                                                                           \end{array}
                                                                           \right.
                  \end{aligned}
\\ \label{d+-23}
\begin{aligned}
  & \mathcal{D}^{new}_{y\bar{ \bullet}}v_2(n_1,n_2) \\ & \quad =\left\{
  \begin{array}{l}
    -v_2(n_1,1),                                       \quad\ \ \  1\leq n_1\leq N_1, n_2=\frac{1}{2},                         \\
    v_2(n_1,n_2+\frac{1}{2})-v_2(n_1,n_2-\frac{1}{2}), \\ \qquad\qquad\quad 1\leq n_1\leq N_1, \frac{3}{2}\leq n_2\leq N_2-\frac{1}{2}, \\
    v_2(n_1, N_2),                                     \qquad 1\leq n_1\leq N_1, n_2=N_2+\frac{1}{2}.
  \end{array}
  \right.
\end{aligned}
		\end{gather}
		and
		\begin{gather}\label{d+-5}
                  \begin{aligned}
                    & \mathcal{D}^{new}_{x \times}v_3 (n_1,n_2) \\ &\quad  =\left\{
                    \begin{array}{l}
                      v_3(\frac{3}{2},n_2),                              \quad\qquad\ \ \  n_1= 1, \frac{1}{2}\leq n_2\leq N_2+\frac{1}{2},              \\
                      v_3(n_1+\frac{1}{2},n_2)-v_3(n_1-\frac{1}{2},n_2), \\ \qquad\qquad\qquad 2\leq n_1\leq N_1-1, \frac{1}{2}\leq n_2\leq N_2+\frac{1}{2}, \\
                      -v_3(N_1-\frac{1}{2},n_2),                         \quad n_1=N_1, \frac{1}{2}\leq n_2\leq N_2+\frac{1}{2},
                    \end{array}
                    \right.
                  \end{aligned}
\\ \label{d+-6}
\begin{aligned}
  & \mathcal{D}^{new}_{y \times}v_3(n_1,n_2) \\ & \quad =\left\{
  \begin{array}{l}
    v_3(n_1,\frac{3}{2}),                              \qquad\quad\ \ \ \, \frac{1}{2}\leq n_1\leq N_1+\frac{1}{2}, n_2=1,               \\
    v_3(n_1,n_2+\frac{1}{2})-v_3(n_1,n_2-\frac{1}{2}), \\ \qquad\qquad\qquad  \frac{1}{2}\leq n_1\leq N_1+\frac{1}{2}, 2\leq n_2\leq N_2-1, \\
    -v_3(n_1, N_2-\frac{1}{2}),                        \quad \frac{1}{2}\leq n_1\leq N_1+\frac{1}{2}, n_2=N_2,
  \end{array}
  \right.
\end{aligned}
		\end{gather}
		\item $\text{div}^{2new}=(\mathcal{D}^{2new})^*:  \bar{\mathcal{U}}_{\bullet}^x\times \bar{\mathcal{U}}_{\bullet}^y\times \mathcal{U}_\times\rightarrow \mathcal{U}_\bullet$, $\text{div}^{2new}v =  \text{div}^{new}\text{Div}^{new}v.$
	\end{enumerate}
	Indeed, one can verify that with the above definitions, a discrete Gauss--Green theorem as follows holds for $u \in \mathcal{U}_\bullet$ and $v \in \bar{\mathcal{U}}^x_\bullet \times \bar{\mathcal{U}}^y_\bullet \times \mathcal{U}_\times$:
	\begin{equation}
		\langle u,\text{div}^{2new} v\rangle=-\langle \mathcal{D}^{new}u, \text{Div}^{new} v\rangle=
		\langle \mathcal{D}^{2new}u, v\rangle.
	\end{equation}
	\subsection{Grid Interpolation and Conversion Operators}\label{sec35}
	Assume $w=\left(\begin{array}{c} w_1 \\ w_2
	\end{array}\right)\in{\mathcal{U}}_\leftrightarrow\times{\mathcal{U}}_\updownarrow$ and $v=(v_1,v_2,v_3)\in \bar{\mathcal{U}}_\bullet^x\times \bar{\mathcal{U}}_\bullet^y \times{\mathcal{U}}_{\times}$.
	We define linear grid domain  conversion operators
	$L_\bullet:{\mathcal{U}}_\leftrightarrow\times{\mathcal{U}}_\updownarrow\rightarrow {\mathcal{U}}_\bullet\times {\mathcal{U}}_\bullet$, $L_\leftrightarrow:{\mathcal{U}}_\leftrightarrow\times{\mathcal{U}}_\updownarrow\rightarrow {\mathcal{U}}_\leftrightarrow\times{\mathcal{U}}_\leftrightarrow$ and
	$L_\updownarrow:{\mathcal{U}}_\leftrightarrow\times{\mathcal{U}}_\updownarrow\rightarrow {\mathcal{U}}_\updownarrow\times {\mathcal{U}}_\updownarrow$ as follows:
	\begin{align} \notag
				(L_\bullet w)_1(n_1,n_2) &=
				\tfrac{1}{2}(w_1(n_1+\tfrac{1}{2},n_2)+w_1(n_1-\tfrac{1}{2},n_2)),  \\ \notag & \qquad\qquad\qquad\quad 1\leq n_1\leq N_1, 1\leq n_2\leq N_2,
				\\ \notag
				(L_\bullet w)_2(n_1,n_2) &=
				\tfrac{1}{2}(w_2(n_1, n_2+\tfrac{1}{2})+w_2(n_1, n_2-\tfrac{1}{2})), \\ \notag & \qquad\qquad\qquad\quad  1\leq n_1\leq N_1, 1\leq n_2\leq N_2, \\ \notag
				(L_\leftrightarrow w)_1(n_1,n_2)
				&=w_1(n_1,n_2), \ \  \tfrac{1}{2}\leq n_1\leq N_1+\tfrac{1}{2}, 1\leq n_2\leq N_2,                                \\ \notag
				\displaystyle	(L_\leftrightarrow w)_2(n_1,n_2)
				\\ \notag & \hspace{-4.5em} =\left\{\begin{array}{l}
					\frac{1}{4}( w_2(1,n_2-\frac{1}{2})+w_2(1,n_2+\frac{1}{2})), \\ \qquad\qquad\qquad\qquad\ \  n_1=\frac{1}{2}, 1\leq n_2\leq N_2, \\
					\begin{array}{l}
						\frac{1}{4}(w_2(n_1-\frac{1}{2},n_2-\frac{1}{2})+w_2(n_1-\frac{1}{2},n_2+\frac{1}{2}) \\ \qquad+w_2(n_1+\frac{1}{2},n_2-\frac{1}{2})+w_2(n_1+\frac{1}{2}, n_2+\frac{1}{2})),
					\end{array}
					\\ \qquad\qquad\qquad\qquad\ \ 
					\frac{3}{2}\leq n_1\leq N_2-\frac{1}{2}, 1\leq n_2\leq N_2,                                            \\
					\frac{1}{4}(w_2(N_1, n_2-\frac{1}{2})+w_2(N_1,n_2+\frac{1}{2})), \\ \qquad\qquad\qquad\qquad\ \  n_1=N_1+\frac{1}{2},
					1\leq n_2\leq N_2,
				\end{array}
				\right.
				\\ \notag 
				\displaystyle	(L_\updownarrow w)_1(n_1,n_2) \\ \notag &\hspace{-4.5em} = \left\{\begin{array}{l}
					\frac{1}{4}(w_1(n_1-\frac{1}{2},1)+w_1(n_1+\frac{1}{2},1)),   \\ \qquad\qquad\qquad\qquad\ \  1\leq n_1\leq N_1, n_2=\frac{1}{2},                         \\
					\begin{array}{l}
						\frac{1}{4}(w_1(n_1-\frac{1}{2},n_2-\frac{1}{2})+w_1(n_1-\frac{1}{2},n_2+\frac{1}{2}) \\ \qquad +w_1(n_1+\frac{1}{2},n_2-\frac{1}{2})+w_1(n_1+\frac{1}{2}, n_2+\frac{1}{2})),
					\end{array}
					\\ \qquad\qquad\qquad\qquad\ \  1\leq n_1\leq N_1, \frac{3}{2}\leq n_2\leq N_2-\frac{1}{2}, \\
					\frac{1}{4}(w_1(n_1-\frac{1}{2}, N_2)+w_1(n_1+\frac{1}{2},N_2)), \\ \qquad\qquad\qquad\qquad\ \  1\leq n_1\leq N_1, n_2=N_2+\frac{1}{2},
				\end{array}
				\right.                                                                                                         \\ \label{operatorL2}
				(L_\updownarrow w)_2(n_1,n_2)&=w_2(n_1,n_2), \ \  1\leq n_1\leq N_1, \tfrac{1}{2}\leq n_2\leq N_2+\tfrac{1}{2}.
	\end{align}
	Note that at some points in the above definitions, we extended the respective grid in a natural manner and assumed zero values in order to adhere to Principle~\ref{tec2}.
	Moreover, the linear operator $L_\bullet: \bar{\mathcal{U}}_\bullet^x\times \bar{\mathcal{U}}_\bullet^y \times{\mathcal{U}}_{\times}\rightarrow {\mathcal{U}}_\bullet\times {\mathcal{U}}_\bullet\times {\mathcal{U}}_\bullet$ is defined by
	\begin{align} \notag
			(L_\bullet v)_1(n_1,n_2)& =v_1(n_1,n_2), \quad 1\leq n_1\leq N_1, 1\leq n_2\leq N_2, \\ \notag
			(L_\bullet v)_2(n_1,n_2)&=v_2(n_1,n_2), \quad 1\leq n_1\leq N_1, 1\leq n_2\leq N_2, \\ \notag
			(L_\bullet v)_3(n_1,n_2)&=\tfrac{1}{4}(v_3(n_1-\tfrac{1}{2},n_2-\tfrac{1}{2})
          +v_3(n_1-\tfrac{1}{2},n_2+\tfrac{1}{2}) \\ \notag
          & \quad +v_3(n_1+\tfrac{1}{2},n_2-\tfrac{1}{2})
			+v_3(n_1+\tfrac{1}{2},n_2+\tfrac{1}{2})),
			\\ 
          & \qquad\qquad\qquad\qquad \ \  1\leq n_1\leq N_1,  1\leq n_2\leq N_2.
	\end{align}
	Again, in the sequel, the domain of $L_\bullet$ is made clear such that this operator cannot be confused with the previously-defined operator with the same notation.
	In summary, $L_\bullet$ are operators that convert the grid domain of each component of a given image to an image on $A_\bullet$, $L_\leftrightarrow$ is a similar grid domain conversion operator to $A_\leftrightarrow$ and $L_\updownarrow$ is a similar grid domain conversion operator to $A_\updownarrow$.
	\subsection{Proposed Model and its Fenchel--Rockafellar Dual}
        \label{sec:model}
	\subsubsection{Formulation of the Discrete TGV Functional}
	Now, we propose the following discretization of TGV of order 2 according to~\eqref{tgv1}:
	\begin{align} \notag
          &\text{TGV}_{\alpha}^{2 (new)}(u) \\ \notag
          & \quad =\max_{v,w}\left\{\langle u,s\rangle:\ v \in \bar{\mathcal{U}}_\bullet^x \times \bar{\mathcal{U}}_\bullet^y \times \mathcal{U}_\bullet,  w \in \mathcal{U}_{\leftrightarrow} \times \mathcal{U}_{\updownarrow},\right. \\ \notag & \qquad\qquad \left.\|L_\bullet v\|_{\infty}\leq \alpha_0,\ \|L_{\bullet}w\|_{\infty}\leq \alpha_1,\ \|L_{\leftrightarrow}w\|_{\infty}\leq \alpha_1, \right. \\ \label{nrt}  & \qquad\qquad \left.\|L_{\updownarrow}w\|_{\infty}\leq \alpha_1, \  w=\text
            {Div}^{new} v,\ s=\text{div}^{new} w \right\},
	\end{align}
	where
	\begin{align} \notag
          &\|L_\star w\|_\infty \\ \notag & = \max\{\sqrt{(L_{\star}w)_1(n_1,n_2)^2+(L_{\star}w)_2(n_1,n_2)^2}:  (n_1,n_2)\in A_\star\}, \\ \notag & \qquad\qquad\qquad\qquad\qquad\qquad\qquad\qquad\qquad \star=\bullet,  \leftrightarrow, \updownarrow, \\ \notag
          & \|L_\bullet v\|_\infty \\  & = \max\left\{\sqrt{
                                               \begin{array}{l}
                                     (L_{\bullet}v)_1(n_1,n_2)^2 \\ \quad +(L_{\bullet}v)_2(n_1,n_2)^2 \\ \qquad\qquad + 2 (L_{\bullet}v)_3(n_1,n_2)^2
                                   \end{array}
                                   }:  (n_1,n_2)\in A_\bullet\right\}. \label{eq:newtgv_norms}
	\end{align}
	In the formulation of classic discrete TGV (\ref{tgv}), two constraints are used: $\|v\|_\infty\leq\alpha_0$ and $\|\text{Div}\,v\|_\infty\leq\alpha_1,$ whereas in the new proposed discrete TGV (\ref{nrt}), we use four constraints: $\|L_\bullet v\|_\infty\leq\alpha_0, \|L_\bullet (\text{Div}^{new}v)\|_\infty\leq\alpha_1,
	\|L_\leftrightarrow (\text{Div}^{new} v)\|_\infty\leq\alpha_1$ and $\|L_\updownarrow (\text{Div}^{new}v)\|_\infty\leq\alpha_1.$ In other words, instead of the boundedness of the vector field $\text{Div}\,v$ and the tensor field $v$, we impose boundedness for their converted versions.
	
	Let us revisit the operators $\text{div}$ and $\text{div}^2$ of Subsection~\ref{sec25} in view of Principles~\ref{tec1} and~\ref{tec2}. Then, $\text{div}: (\mathbb{R}^2)^{N_1 \times N_2} \to \mathbb{R}^{N_1 \times N_2}$ can be interpreted as $\text{div}: \mathcal{U}_\leftrightarrow \times \mathcal{U}_\updownarrow \to \mathcal{U}_\bullet$ if we identify $\mathcal{U}_\bullet \,\hat{=}\, \mathbb{R}^{N_1 \times N_2}$, $\mathcal{U}_\leftrightarrow \,\hat{=}\, \bigl(\mathbb{R}^{1 \times N_2} + (-\tfrac12, 0) \bigr) \times \bigl(\mathbb{R}^{N_1 \times N_2} + (\tfrac12,0) \bigr)$, $\mathcal{U}_\updownarrow \,\hat{=}\, \bigl( \mathbb{R}^{N_1 \times 1}+ (0,-\tfrac12) \bigr) \times \bigl( \mathbb{R}^{N_1 \times N_2} + (0,\tfrac12) \bigr)$, where $+$ denotes an index shift and the entries that do not correspond to $\mathbb{R}^{N_1 \times N_2}$ are filled with zero. Likewise, $\text{Div}: S(\mathbb{R}^4)^{N_1 \times N_2} \to (\mathbb{R}^2)^{N_1 \times N_2}$ can be interpreted as $\text{Div}: \bar{\mathcal{U}}_\bullet^x \times \bar{\mathcal{U}}_\bullet^y \times \mathcal{U}_\bullet \to \mathcal{U}_\leftrightarrow \times \mathcal{U}_\updownarrow$ by the identification $(v_1,v_2,v_3) \,\hat{=}\, \left(\begin{array}{cc}
		v_1 & v_3 \\
		v_3 & v_2
	\end{array}
	\right)$ and $\bar{\mathcal{U}}_\bullet^x \,\hat{=}\, \bigl(\mathbb{R}^{1 \times N_2} + (-1,0)\bigr) \times \mathbb{R}^{N_1 \times N_2} \times \bigl(\mathbb{R}^{1 \times N_2} + (N_1,0)\bigr)$, $\bar{\mathcal{U}}_\bullet^y \,\hat{=}\, \bigl(\mathbb{R}^{N_1 \times 1} + (0,-1) \bigr) \times \mathbb{R}^{N_1 \times N_2} \times \bigl(\mathbb{R}^{N_1 \times 1} + (0,N_2)\bigr)$, $\mathcal{U}_\times \,\hat{=}\, (\mathbb{R}^{N_1 \times N_2} \times \bigl(\mathbb{R}^{1 \times (N_2 +1)} + (-\tfrac12,-\tfrac12) \bigr) \times \bigl(\mathbb{R}^{N_1 \times 1} + (\tfrac12, -\tfrac12)\bigr) \times \bigl(\mathbb{R}^{N_1 \times N_2} + (\tfrac12,\tfrac12) \bigr)$, where again $+$ denotes an index shift and the entries that do not correspond to $\mathbb{R}^{N_1 \times N_2}$ are filled with zero.
	With the index shifts introduced in the above identifications, the constraints in the classic discrete second-order TGV according to~\eqref{tgv} correspond to:
	\begin{align}\notag
		\sqrt{v_1(n_1,n_2)^2+v_2(n_1,n_2)^2+2v_3(n_1+\tfrac12,n_2 + \tfrac12)^2} &\leq \alpha_0, \\ \label{loc3} \sqrt{(\text{Div}\,v)_1(n_1+\tfrac12,n_2)^2+(\text{Div}\,v)_2(n_1,n_2+\tfrac12)^2}&\leq\alpha_1,
	\end{align}
	for $v \in \bar{\mathcal{U}}_\bullet^x \times \bar{\mathcal{U}}_\bullet^y \times \mathcal{U}_\bullet$ and %
	$(n_1,n_2) \in A_\bullet$.
	The constraint on the first line of \eqref{loc3} refers to the square root of the sum of two elements on the common grid $A_\bullet$ (subset of both $\bar{A}^x_\bullet$ and $\bar{A}^y_\bullet$), whereas the third element corresponds to the shifted grid $A_\times$. Likewise, in the constraint on the second line, two elements of the different grids $A_\leftrightarrow$ and $A_\updownarrow$ %
	are added. In other words, %
	for both constraints, there exists an inconsistency in terms of the grid point evaluation.
	
	As it is explained before, if $u\in\mathcal{U}_\bullet,$ then $\mathcal{D}^{new}u, \text{Div}^{new}v\in \mathcal{U}_\leftrightarrow \times\mathcal{U}_\updownarrow, \mathcal{D}^2u,v\in \bar{\mathcal{U}}_\bullet^x\times\bar{\mathcal{U}}_\bullet^y\times\mathcal{U}_\times$.
	Assume $w_1=(\text{Div}^{new} v)_1, w_2=(\text{Div}^{new} v)_2$. Then, the constraints in optimization problem (\ref{nrt}) can be expressed by
	\begin{align} \notag
			\displaystyle	\sqrt{(L_\star w)_1(n_1,n_2)^2+(L_\star w)_2(n_1,n_2)^2}&\leq\alpha_1, \\ \notag &  \hspace*{-1.5em} (n_1,n_2)\in A_\star, \star=\bullet, \leftrightarrow, \updownarrow, \\ \label{cons} 
			\displaystyle	\sqrt{
          \begin{aligned}
            &(L_\bullet v)_1(n_1,n_2)^2+(L_\bullet v)_2(n_1,n_2)^2 \\ &\qquad\qquad\qquad\ \  + 2(L_\bullet v)_3(n_1,n_2)^2
          \end{aligned}
          }&\leq\alpha_0, \quad   (n_1,n_2)\in A_\bullet,	\end{align}
	where $(L_\star w)_1, (L_\star w)_2 \in A_\star, (L_\bullet v)_1, (L_\bullet v)_2, (L_\bullet v)_3\in A_\bullet.$ Therefore, the norm definitions in (\ref{cons}) admit grid domain consistency.
	Moreover, another difference of the classic discrete TGV in comparison with the new proposed one is the rotational invariance with respect to $90^{\circ}$ rotation. This property is discussed in the next section.
	\begin{remark}
		Note that other choices of interpolation operators in \eqref{nrt} are possible. %
		Generally, we can define
		operators converting elements of $v \in \bar{\mathcal{U}}_\bullet^x \times \bar{\mathcal{U}}_\bullet^y \times \mathcal{U}_\bullet$ and $w \in \mathcal{U}_{\leftrightarrow} \times \mathcal{U}_{\updownarrow}$ to respective versions on the grids $A_\bullet, A_\leftrightarrow, A_\updownarrow, A_\times$, resulting in 8 operators, denoted by $L_\bullet$, $L_\leftrightarrow$, $L_\updownarrow, L_\times$ with a slight abuse of notation. In principle, any non-empty subset of these operators applied to $v$ and $w$ would also be possible in~\eqref{nrt}.
		As it can be observed, (\ref{nrt})
		only contains the operator
		$L_{\bullet}$  for $v$ and the three conversion operators $L_{\leftrightarrow}, L_{\updownarrow}$ and $L_{\bullet}$ for $w$. %
		As $v$ contains two components in the extended center grids $\bar{A}_\bullet^x$, $\bar{A}_\bullet^y$ which are supersets of $A_\bullet$, and one component in the corner grid $A_\times$, we preferred to use only the conversion operator $L_{\bullet}$.  %
		For the variable $w$, as the components belong to $\mathcal{U}_{\leftrightarrow}$ and $\mathcal{U}_{\updownarrow}$, we use %
		the conversion operators $L_{\leftrightarrow}$ and $ L_{\updownarrow} $ as well as the natural conversion operator $L_{\bullet}$. %
		This selection %
		realizes a good trade-off between
		accuracy and efficiency. Also, as we will see in Section~\ref{invariance}, the choice of conversion operators allows us to prove a $90^\circ$ rotational invariance property. In contrast, the classic discrete TGV is not invariant with respect to $90^\circ$ rotations.
	\end{remark}
	\subsubsection{Fenchel--Rockafellar Dual of the Proposed Model}
	In this subsection we find a dual form for the proposed new discrete TGV (\ref{nrt}). We need such formulation to employ a primal-dual algorithm to solve corresponding denoising and inverse problems. Define
	\begin{align} \notag
		K=\{&(v,w,s) \in (\bar{\mathcal{U}}_\bullet^x \times \bar{\mathcal{U}}_\bullet^y \times \mathcal{U}_\bullet) \times (\mathcal{U}_\leftrightarrow\times \mathcal{U}_\updownarrow) \times \mathcal{U}_\bullet : \\ \notag &|L_\bullet v(n_1,n_2)|\leq\alpha_0 \ \forall (n_1,n_2)\in A_\bullet, \\ \notag &|L_{\star}w(n_1,n_2)|\leq \alpha_1, \star=\bullet, \leftrightarrow, \updownarrow \ \forall (n_1,n_2)\in A_\star, &
		\\ \label{K} & \qquad\qquad\qquad\quad\  w=-\text{Div}^{new} v, s=-\text{div}^{new}w  \}.
	\end{align}
	Then, obviously
	\begin{equation}\label{opp}
		\text{TGV}_{\alpha}^{2 (new)}(u)= \max_{(v,w,s)} \ \langle u,s\rangle -I_{K}(v,w,s),
	\end{equation}
	where
	$I_{K}(t)=\left\{
	\begin{array}{ll}
		0,      & t\in K,    \\
		\infty, & t\notin K.
	\end{array}
	\right.$
	We aim at finding a dual definition of $\text{TGV}_{\alpha}^{2 (new)}$. For this purpose, the adjoint operators of $L_{\bullet}, L_{\leftrightarrow}$ and $L_{\updownarrow}$ are calculated in the following.

	Let $w_{\bullet}=\left(\begin{array}{c} w_{\bullet}^1 \\ w_{\bullet}^2\\
	\end{array}\right)\in {\mathcal{U}}_\bullet\times {\mathcal{U}}_\bullet$,  $w_{\leftrightarrow}=\left(\begin{array}{c} w_{\leftrightarrow}^1 \\ w_{\leftrightarrow}^2
	\end{array}\right)\in {\mathcal{U}}_\leftrightarrow\times {\mathcal{U}}_\leftrightarrow$, $w_{\updownarrow}=\left(\begin{array}{c} w_{\updownarrow}^1 \\ w_{\updownarrow}^2
	\end{array}\right)\in {\mathcal{U}}_\updownarrow\times {\mathcal{U}}_\updownarrow$.
	Then, we have the following adjoint operators $L_\bullet^*: {\mathcal{U}}_\bullet\times {\mathcal{U}}_\bullet\rightarrow {\mathcal{U}}_\leftrightarrow\times {\mathcal{U}}_\updownarrow$, $L_\leftrightarrow^*: {\mathcal{U}}_\leftrightarrow\times {\mathcal{U}}_\leftrightarrow\rightarrow {\mathcal{U}}_\leftrightarrow\times {\mathcal{U}}_\updownarrow$ and
	$L_\updownarrow^*: {\mathcal{U}}_\updownarrow\times {\mathcal{U}}_\updownarrow\rightarrow {\mathcal{U}}_\leftrightarrow\times {\mathcal{U}}_\updownarrow$:
	\begin{align} \notag
          (L_{\bullet}^*w_{\bullet})_1(n_1,n_2) &= \left\{
					\begin{array}{l}
						\frac{1}{2}w_{\bullet}^1(1,n_2),                                                    \ \  n_1=\frac{1}{2}, 1\leq n_2\leq N_2,                         \\
						\frac{1}{2}(w_{\bullet}^1(n_1+\frac{1}{2},n_2)+w_{\bullet}^1(n_1-\frac{1}{2},n_2)), \\ \qquad\quad \frac{3}{2}\leq n_1\leq N_1-\frac{1}{2}, 1\leq n_2\leq N_2, \\
						\frac{1}{2}w_{\bullet}^1(N_1,n_2),                                                  \\ \qquad\qquad\ \  n_1=N_1+\frac{1}{2}, 1\leq n_2\leq N_2,                     \\
					\end{array}
					\right.
					\\ \notag
          (L_{\bullet}^*w_{\bullet})_2(n_1,n_2) &= \left\{
					\begin{array}{l}
						\frac{1}{2}w_{\bullet}^2(n_1,1),                                                    \ \  1\leq n_1\leq N_1, n_2=\frac{1}{2},                         \\
						\frac{1}{2}(w_{\bullet}^2(n_1,n_2+\frac{1}{2})+w_{\bullet}^2(n_1,n_2-\frac{1}{2})), \\ \qquad\quad 1\leq n_1\leq N_1, \frac{3}{2}\leq n_2\leq N_2-\frac{1}{2}, \\
						\frac{1}{2}w_{\bullet}^2(n_1, N_2),                                                 \\ \qquad\qquad\ \ 1\leq n_1\leq N_1, n_2=N_2+\frac{1}{2}.
					\end{array}
					\right.
          \\ \notag
					(L_{\leftrightarrow}^*w_{\leftrightarrow})_1(n_1,n_2) &= w_{\leftrightarrow}^1(n_1,n_2), \\ \notag & \qquad\qquad \tfrac{1}{2}\leq n_1\leq N_1+\tfrac{1}{2}, 1\leq n_2\leq N_2,
          \\ \notag
					\displaystyle	(L_{\leftrightarrow}^*w_{\leftrightarrow})_2(n_1,n_2) \\ \notag & \hspace{-5em}= \left\{\begin{array}{l}
						\frac{1}{4}(w_{\leftrightarrow}^2(n_1+\frac{1}{2},1)+w_{\leftrightarrow}^2(n_1-\frac{1}{2},1)), \\ \qquad\qquad\qquad\qquad 1\leq n_1\leq N_1, n_2=\frac{1}{2},                         \\
						\frac{1}{4}(w_{\leftrightarrow}^2(n_1+\frac{1}{2},n_2-\frac{1}{2})+w_{\leftrightarrow}^2(n_1-\frac{1}{2},n_2-\frac{1}{2})
						\\ \qquad
						+w_{\leftrightarrow}^2(n_1+\frac{1}{2},n_2+\frac{1}{2})+w_{\leftrightarrow}^2(n_1-\frac{1}{2}, n_2+\frac{1}{2})), \\  \qquad\qquad\qquad\qquad 1\leq n_1\leq N_1, \frac{3}{2}\leq n_2\leq N_2-\frac{1}{2}, \\
						\frac{1}{4}(w_{\leftrightarrow}^2(n_1+\frac{1}{2},N_2)+w_{\leftrightarrow}^2(n_1-\frac{1}{2},N_2)),             \\ \qquad\qquad\qquad\qquad 1\leq n_1\leq N_1, n_2=N_2+\frac{1}{2},
					\end{array}
					\right. \\ \notag
					\displaystyle	(L_{\updownarrow}^*w_{\updownarrow})_1(n_1,n_2)
					\\ \notag &\hspace{-5em}=\left\{\begin{array}{l}
						\frac{1}{4}(w_{\updownarrow}^1(1,n_2-\frac{1}{2})+w_{\updownarrow}^1(1,n_2+\frac{1}{2})),                   \\ \qquad\qquad\qquad n_1=\frac{1}{2}, 1\leq n_2\leq N_2,                         \\
						\frac{1}{4}(w_{\updownarrow}^1(n_1+\frac{1}{2},n_2-\frac{1}{2})+w_{\updownarrow}^1(n_1-\frac{1}{2},n_2-\frac{1}{2})
						\\ \qquad
						+w_{\updownarrow}^1(n_1+\frac{1}{2},n_2+\frac{1}{2})+w_{\updownarrow}^1(n_1-\frac{1}{2}, n_2+\frac{1}{2})), \\ \qquad\qquad\qquad\qquad \frac{3}{2}\leq n_1\leq N_1-\frac{1}{2}, 1\leq n_2\leq N_2, \\
						\frac{1}{4}(w_{\updownarrow}^1(N_1,n_2-\frac{1}{2})+w_{\updownarrow}^1(N_1,n_2+\frac{1}{2})),               \\ \label{adjop} \qquad\qquad\qquad\qquad n_1=N_1+\frac{1}{2}, 1\leq n_2\leq N_2.
					\end{array}
					\right. \\ \notag
					(L_{\updownarrow}^*w_{\updownarrow})_2(n_1,n_2) &= w_{\updownarrow}^2(n_1,n_2), \\ & \qquad\qquad 1\leq n_1\leq N_1, \tfrac{1}{2}\leq n_2\leq N_2+\tfrac{1}{2}.
	\end{align}
	Moreover, for $v_{\bullet}=\left(\begin{array}{c} v_{\bullet}^1 \\ v_{\bullet}^2\\ v_{\bullet}^3
	\end{array}\right)\in {\mathcal{U}}_\bullet\times {\mathcal{U}}_\bullet\times {\mathcal{U}}_\bullet$, %
	the adjoint operator $L_\bullet^*: {\mathcal{U}}_\bullet\times {\mathcal{U}}_\bullet\times {\mathcal{U}}_\bullet\rightarrow \bar{\mathcal{U}}_\bullet^x\times \bar{\mathcal{U}}_\bullet^y\times {\mathcal{U}}_{\times}$ reads:
	\begin{align} \notag
					(L_\bullet^*v_\bullet)_1(n_1,n_2)&=\left\{\begin{aligned}
						&v_\bullet^1(n_1,n_2), && 1\leq n_1\leq N_1, 1\leq n_2\leq N_2, \\
						&0,                   && n_1=0, N_1+1, 1\leq n_2\leq N_2,
					\end{aligned}
					\right.
					\\ \notag
					(L_\bullet^*v_\bullet)_2(n_1,n_2)&=\left\{\begin{aligned}
						& v_\bullet^2(n_1,n_2), && 1\leq n_1\leq N_1, 1\leq n_2\leq N_2, \\
						& 0,                   && 1\leq n_2\leq N_2, n_2=0, N_2+1,
					\end{aligned}
					\right.
					\\ \label{adjj}
					(L_\bullet^*v_\bullet)_3(n_1,n_2) \\ \notag & \hspace{-5em}=\left\{
					\begin{array}{l}
						\frac{1}{4} v_\bullet^3(1,1),                                                                   \qquad n_1=n_2=\frac{1}{2},                                                                  \\
						\frac{1}{4} (v_\bullet^3(1,n_2-\frac{1}{2})+v_\bullet^3(1,n_2+\frac{1}{2})),                   \\ \qquad\qquad\qquad\quad n_1=\frac{1}{2}, \frac{3}{2} \leq  n_2 \leq N_2-\frac{1}{2},                          \\
						\frac{1}{4} v_\bullet^3(1,N_2),                                                                 \qquad n_1=\frac{1}{2}, n_2=N_2+\frac{1}{2},                                                 \\
						\frac{1}{4}((v_\bullet^3(n_1-\frac{1}{2},1)
						+(v_\bullet^3(n_1+\frac{1}{2},1)),                                                               \\ \qquad\qquad\qquad\quad \frac{3}{2} \leq n_1 \leq N_1-\frac{1}{2}, n_2 = \frac{1}{2},                         \\
						\frac{1}{4}((v_\bullet^3(n_1-\frac{1}{2},n_2-\frac{1}{2})
						+(v_\bullet^3(n_1+\frac{1}{2},n_2-\frac{1}{2})                                                                                                                                          \\
						\qquad
						+(v_\bullet^3(n_1-\frac{1}{2},n_2+\frac{1}{2}) 	+(v_\bullet^3(n_1+\frac{1}{2},n_2+\frac{1}{2})), \\ \qquad\qquad\qquad\quad \frac{3}{2} \leq n_1 \leq N_1-\frac{1}{2}, \frac{3}{2} \leq n_2 \leq N_2-\frac{1}{2}, \\
						\frac{1}{4} (v_\bullet^3(n_1-\frac{1}{2},N_2)+v_\bullet^3(n_1+\frac{1}{2},N_2)),                \\ \qquad\qquad\qquad\quad \frac{3}{2} \leq  n_1 \leq N_1-\frac{1}{2}, n_2= N_2 + \frac{1}{2},                   \\
						\frac{1}{4} v_\bullet^3(N_1,1),                                                                 \qquad n_1=N_1+\frac{1}{2}, n_2=\frac{1}{2},                                                 \\
						\frac{1}{4} (v_\bullet^3(N_1,n_2-\frac{1}{2})+v_\bullet^3(N_1,n_2+\frac{1}{2})),                \\ \qquad\qquad\qquad\quad n_1=N_1 + \frac{1}{2}, \frac{3}{2} \leq  n_2 \leq N_2-\frac{1}{2},                    \\
						\frac{1}{4} v_\bullet^3(N_1,N_2),                                                               \qquad n_1=N_1+\frac{1}{2}, n_2=N_2+\frac{1}{2}.
					\end{array} 
					\right.  
	\end{align}
	Now, we define the operator $L$  and the corresponding dual $L^*$
	via the following operator matrices:
	\begin{align} \notag
          L&=\left(\begin{array}{ccc}
					L_\bullet        & 0                   & 0 \\
					0                & L_{\bullet}         & 0 \\
					0                & L_{\leftrightarrow} & 0 \\
					0                & L_{\updownarrow}    & 0 \\
					0                & \text{div}^{new}    & I \\
					\text{Div}^{new} & I                   & 0\end{array}\right), \\ \label{Lop}
				L^*&=\left(\begin{array}{cccccc}
					L_\bullet^* & 0             & 0                     & 0                  & 0                  & -\mathcal{E}^{new} \\
					0           & L_{\bullet}^* & L_{\leftrightarrow}^* & L_{\updownarrow}^* & -\mathcal{D}^{new} & I                  \\
					0           & 0             & 0                     & 0                  & I                  & 0                  \\
				\end{array}\right). %
	\end{align}
	In the first column of $L$, we have $L_\bullet: \bar{\mathcal{U}}_\bullet^x \times \bar{\mathcal{U}}_\bullet^y \times \mathcal{U}_\times \to \mathcal{U}_\bullet \times \mathcal{U}_\bullet \times \mathcal{U}_\bullet$ while in the second column, $L_\bullet: \mathcal{U}_{\leftrightarrow} \times \mathcal{U}_{\updownarrow} \to \mathcal{U}_\bullet \times \mathcal{U}_\bullet$. Consequently, in the first row of $L^*$, we have $L_\bullet^*: \mathcal{U}_\bullet \times \mathcal{U}_\bullet \times \mathcal{U}_\bullet \to \bar{\mathcal{U}}_\bullet^x \times \bar{\mathcal{U}}_\bullet^y \times \mathcal{U}_\times$ while in the second row, $L_\bullet^*: \mathcal{U}_\bullet \times \mathcal{U}_\bullet \to \mathcal{U}_{\leftrightarrow} \times \mathcal{U}_{\updownarrow}$. Analogous considerations apply to the operators $\text{div}^{new},$ and $\text{Div}^{new}$ in $L$.
	\begin{remark}
		The boundary conditions associated with the adjoint of the above conversion operators are dictated by the adjointness requirement:
		$$\langle L^*_\star w_\star, w\rangle=\langle w_\star,L_\star w\rangle, \star=\bullet,\leftrightarrow,\updownarrow, \quad \langle L^*_\bullet v_\bullet, v\rangle=\langle v_\bullet,L_\bullet v\rangle,$$
		for each $w \in \mathcal{U}_{\leftrightarrow} \times \mathcal{U}_{\updownarrow}$, $w_\star \in \mathcal{U}_\star \times \mathcal{U}_\star$, $\star = \bullet, \leftrightarrow, \updownarrow$, $v \in \bar{\mathcal{U}}_\bullet^x \times \bar{\mathcal{U}}_\bullet^y \times \mathcal{U}_\times$ and $v_\bullet \in \mathcal{U}_\bullet \times \mathcal{U}_\bullet \times \mathcal{U}_\bullet$.
	\end{remark}
	We employ the following theorem in order to find a dual form of %
	the proposed regularization term \cite{Convex}.
	\begin{theorem}[Fenchel Duality Theorem]\label{t1}
		Assume $X, Y$ are real Banach spaces, $f:X\rightarrow {]{-\infty, +\infty}]}$ and $g:Y\rightarrow {]{-\infty, +\infty}]}$ are proper, convex and lower-semicontinuous functions and $A:X \rightarrow Y$ is a linear continuous operator. If there exists $x_0\in X$ such that $f(x_0)<\infty$ and $g$ is continuous at $Ax_0$, then
		\begin{equation}{\label{FDT}}
			\sup_{x \in X} \ -f(x)-g(Ax) =\min_{y^* \in Y^*} \ g^*(y^*)+f^*(-A^*y^*),
		\end{equation}
		where $f^*$ and $g^*$ are the Fenchel conjugates of $f$ and $g$, respectively.
	\end{theorem}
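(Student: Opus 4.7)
The plan is to split the proof of~\eqref{FDT} into two parts: weak duality (the inequality $\sup \leq \min$) and strong duality with attainment of the minimum. For weak duality, I would invoke the Fenchel--Young inequality applied to both $f$ and $g$. For every $x \in X$ and $y^* \in Y^*$ one has $f(x) + f^*(-A^*y^*) \geq \langle -A^*y^*, x\rangle = -\langle y^*, Ax\rangle$ and $g(Ax) + g^*(y^*) \geq \langle y^*, Ax\rangle$. Adding and rearranging yields $-f(x) - g(Ax) \leq f^*(-A^*y^*) + g^*(y^*)$; passing to the supremum on the left and the infimum on the right gives the $\leq$ direction in~\eqref{FDT}.

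For strong duality and attainment, I would introduce the value (perturbation) function $h : Y \to [-\infty, +\infty]$ by $h(y) = \inf_{x \in X} \bigl(f(x) + g(Ax + y)\bigr)$. A direct calculation, using the substitution $z = Ax + y$ to decouple the supremum defining $h^*$, gives the key identity $h^*(y^*) = f^*(-A^*y^*) + g^*(y^*)$. Since $-h(0) = \sup_{x \in X}(-f(x) - g(Ax))$, assertion~\eqref{FDT} is equivalent to showing $h(0) = h^{**}(0)$ with the supremum defining $h^{**}(0) = \sup_{y^*} (-h^*(y^*))$ attained. Convexity of $h$ follows from the joint convexity of $(x,y) \mapsto f(x) + g(Ax+y)$, so what remains is to upgrade this convexity to continuity of $h$ at $0$, which in turn will yield $\partial h(0) \neq \emptyset$ and hence both equality and attainment.

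This last step is the main obstacle and is precisely where the qualification hypothesis enters. By assumption, $g$ is continuous at $Ax_0$ for some $x_0$ with $f(x_0) < \infty$, so there exist a neighborhood $U$ of $0 \in Y$ and a constant $M < \infty$ with $g(Ax_0 + y) \leq M$ for all $y \in U$. Substituting $x = x_0$ in the definition of $h$ gives $h(y) \leq f(x_0) + M$ on $U$, so the convex function $h$ is proper and bounded above on an open neighborhood of $0$. By the standard convex-analysis result that a convex function bounded above on a neighborhood of an interior point of its domain is locally Lipschitz and hence continuous there, $h$ is continuous at $0$, so $\partial h(0) \neq \emptyset$. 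Picking $\bar y^* \in \partial h(0)$, the Fenchel--Young equality gives $h(0) + h^*(\bar y^*) = \langle \bar y^*, 0\rangle = 0$, hence $h^*(\bar y^*) = -h(0)$. Combined with the weak-duality bound $h^*(y^*) \geq -h(0)$ valid for every $y^*$, this shows that $\min_{y^* \in Y^*} h^*(y^*)$ is attained at $\bar y^*$ and equals $-h(0)$, which is exactly the form of~\eqref{FDT} that we sought.
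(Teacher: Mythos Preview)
Your proof is the standard perturbation-function argument for Fenchel--Rockafellar duality and is essentially correct. One small point worth making explicit: you should note that the degenerate case $h(0)=-\infty$ (equivalently, the left-hand side of~\eqref{FDT} equals $+\infty$) is handled trivially, since then $h^*(y^*)\geq -h(0)=+\infty$ for every $y^*$ and the equality with attainment is vacuous; your continuity/subdifferential argument tacitly assumes $h(0)\in\mathbb{R}$, which is the only substantive case.

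As for comparison with the paper: there is nothing to compare. The paper does not prove Theorem~\ref{t1}; it merely states the Fenchel Duality Theorem and cites \cite{Convex} (Bauschke--Combettes) as the source, then invokes it as a black box to derive Theorem~\ref{theorem3}. Your argument is precisely the classical proof one finds in that reference (or in Ekeland--Temam, Borwein--Zhu, etc.), so in effect you have supplied the proof the paper outsources.
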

	\begin{theorem}\label{theorem3}
		The functional $TGV_{\alpha}^{2(new)}$ according to (\ref{nrt})
		satisfies:
		\begin{align} \notag
			& \displaystyle \text{TGV}_{\alpha}^{2 (new)}(u)=\min_{v_\bullet, w_{\bullet}, w_{\leftrightarrow}, w_{\updownarrow}, \omega} \ \alpha_0\| v_\bullet\|_1+ \alpha_1\|{w}_{\bullet}\|_1  \\[-0.5em]  & \notag \qquad\qquad\qquad\qquad\qquad\qquad\qquad + \alpha_1\| {w}_{\leftrightarrow}\|_1+
				\alpha_1\|{w}_{\updownarrow}\|_1 \\[0.5em] \label{nrt2}
				& \text{subject to} \quad %
				\left\{\begin{aligned}
					\mathcal{D}^{new}u - \omega & = L_\bullet^* v_\bullet + L_\leftrightarrow^* v_\leftrightarrow + L_\updownarrow^* v_\updownarrow, \\
					\mathcal{E}^{new}           & = L_\bullet^* w_\bullet,
				\end{aligned}\right.
		\end{align}
		where $v_\bullet \in \mathcal{U}_\bullet \times \mathcal{U}_\bullet \times \mathcal{U}_\bullet$, $w_\star \in \mathcal{U}_\star \times \mathcal{U}_\star$, $\star = \bullet, \leftrightarrow, \updownarrow$ %
		and $\omega \in \mathcal{U}_\leftrightarrow \times \mathcal{U}_\updownarrow$.
	\end{theorem}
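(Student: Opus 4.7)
My plan is to derive the claim from the Fenchel duality theorem (Theorem~\ref{t1}) applied to the $\sup$-formulation~\eqref{opp}. Before invoking it, I would eliminate the linear equality constraints $w = -\text{div}^{new} v$ and $s = -\text{div}^{new} w$ defining $K$ by direct substitution. This step is essential because the remaining constraint set becomes the product of four closed balls (defined by the pointwise norms on $L_\bullet v$ and $L_\star w$, $\star \in \{\bullet,\leftrightarrow,\updownarrow\}$), which has nonempty interior since $\alpha_0, \alpha_1 > 0$. Hence the qualification hypothesis of Theorem~\ref{t1} is trivially verified at $v_0 = 0$.

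After substituting $s = \text{div}^{2new} v = \text{div}^{new}\text{div}^{new} v$ and using the discrete Gauss--Green identity $\langle u, \text{div}^{2new} v\rangle = \langle \mathcal{D}^{2new} u, v\rangle$, the primal reads
$$\text{TGV}_{\alpha}^{2(new)}(u) = \sup_v \bigl\{ \langle \mathcal{D}^{2new} u, v\rangle : |L_\bullet v| \le \alpha_0,\ |L_\star \text{div}^{new} v| \le \alpha_1,\ \star = \bullet, \leftrightarrow, \updownarrow \bigr\}.$$
This matches the template $\sup_v -f(v) - g(\tilde L v)$ with $f(v) = -\langle \mathcal{D}^{2new} u, v\rangle$, the composite operator $\tilde L v = \bigl(L_\bullet v,\, L_\bullet \text{div}^{new} v,\, L_\leftrightarrow \text{div}^{new} v,\, L_\updownarrow \text{div}^{new} v\bigr)$, and $g$ the indicator of the product of the four pointwise balls.

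The computations of the conjugates and of $\tilde L^*$ are then routine. The linearity of $f$ gives $f^*(x^*) = I_{\{-\mathcal{D}^{2new} u\}}(x^*)$, which on the dual side enforces $\tilde L^* y^* = \mathcal{D}^{2new} u$. The conjugate $g^*$ factorizes across its four indicator components and yields $\alpha_0 \|v_\bullet\|_1 + \alpha_1(\|w_\bullet\|_1 + \|w_\leftrightarrow\|_1 + \|w_\updownarrow\|_1)$, by the standard duality between sup-of-Euclidean-norm and sum-of-Euclidean-norm. Via $(\text{div}^{new})^* = -\mathcal{E}^{new}$ established earlier, the adjoint $\tilde L^*$ evaluates to
$$\tilde L^*(v_\bullet, w_\bullet, w_\leftrightarrow, w_\updownarrow) = L_\bullet^* v_\bullet - \mathcal{E}^{new}\bigl( L_\bullet^* w_\bullet + L_\leftrightarrow^* w_\leftrightarrow + L_\updownarrow^* w_\updownarrow \bigr).$$
The single equation $\tilde L^* y^* = \mathcal{E}^{new}\mathcal{D}^{new} u$ can then be decomposed, after absorbing signs into the $w$-multipliers (which the symmetric norm $\|\cdot\|_1$ permits), by introducing the slack $\omega := \mathcal{D}^{new} u - (L_\bullet^* w_\bullet + L_\leftrightarrow^* w_\leftrightarrow + L_\updownarrow^* w_\updownarrow)$. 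This gives the two affine constraints $\mathcal{D}^{new} u - \omega = L_\bullet^* w_\bullet + L_\leftrightarrow^* w_\leftrightarrow + L_\updownarrow^* w_\updownarrow$ and $L_\bullet^* v_\bullet = \mathcal{E}^{new}\omega$ of the theorem.

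The main obstacle I foresee is purely bookkeeping. The symbol $L_\bullet^*$ refers to two distinct operators depending on whether it acts on vector- or symmetric-tensor-valued arguments (compare~\eqref{adjop} with~\eqref{adjj}), and the staggered grids $\mathcal{U}_\leftrightarrow$, $\mathcal{U}_\updownarrow$, $\bar{\mathcal{U}}_\bullet^x$, $\bar{\mathcal{U}}_\bullet^y$, $\mathcal{U}_\times$ must be tracked through each expression. In particular, the factor $2$ weighting the off-diagonal tensor component in the primal norm~\eqref{eq:newtgv_norms} must appear consistently in the dual norm $\|v_\bullet\|_1$, and the identification of $L$ and $L^*$ with the block-matrix expressions in~\eqref{Lop} must be done componentwise. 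Once these identifications are fixed, the Fenchel calculation itself is essentially formal.
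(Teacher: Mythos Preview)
Your approach is correct and close in spirit to the paper's, but the setup differs in a way worth noting. The paper applies Theorem~\ref{t1} directly to~\eqref{opp} without first eliminating the equality constraints: it keeps the triple $(v,w,s)$ as the primal variable, takes $A=L$ to be the full $6\times 3$ block operator~\eqref{Lop}, and encodes $w=-\text{div}^{new}v$, $s=-\text{div}^{new}w$ via the last two components of $\bar K$ (the conditions $\bar u=0$, $\omega=0$). The six dual variables $(v_\bullet,w_\bullet,w_\leftrightarrow,w_\updownarrow,\bar u,\omega)$ appear directly, and after enforcing $f^*(-L^*y^*)<\infty$ one reads off the two constraints with $\omega$ already present. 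You instead substitute out $w,s$ first, apply Fenchel duality with the single primal variable $v$ and the four-component operator $\tilde L$, and then reintroduce $\omega$ as the auxiliary combination $\mathcal{D}^{new}u-\sum_\star L_\star^* w_\star$; since the first constraint in~\eqref{nrt2} determines $\omega$ uniquely from the $w_\star$, this repackaging is legitimate. Your route has the advantage that the qualification condition of Theorem~\ref{t1} is genuinely trivial: $\tilde L(0)=0$ lies in the interior of a product of nondegenerate balls. In the paper's setup, $\bar K$ has empty interior because of the equalities $\bar u=0$, $\omega=0$, so the continuity hypothesis on $g$ is not literally satisfied at any point; the paper relies (implicitly) on the fact that in finite dimensions strong duality still holds under weaker relative-interior or polyhedral conditions. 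Your elimination step sidesteps this entirely.
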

	\begin{proof}
		Consider the optimization problem (\ref{opp}). To find the Fenchel dual problem via the Fenchel duality theorem, we define, for a given $u\in\mathcal{U}_\bullet$,
		$f(v, w, s) = - \langle u,s \rangle$ for $(v,w,s) \in (\bar{\mathcal{U}}_\bullet^x \times \bar{\mathcal{U}}_\bullet^y \times \mathcal{U}_\times) \times (\mathcal{U}_{\leftrightarrow} \times \mathcal{U}_{\updownarrow}) \times \mathcal{U}_\bullet$, $g = I_{\bar K}$
		where
		\begin{align} \notag
			\bar{K}=\bigl\{
			&(v_\bullet, {w}_{\bullet}, {w}_{\leftrightarrow},	{w}_{\updownarrow}, \bar u,	\omega) \\ \notag &\in \mathcal{U}_\bullet^3 \times \mathcal{U}_\bullet^2 \times \mathcal{U}_\leftrightarrow^2 \times \mathcal{U}_\updownarrow^2 \times \mathcal{U}_\bullet \times (\mathcal{U}_\leftrightarrow \times \mathcal{U}_\updownarrow)
			: \\ \notag & \quad |v_\bullet(n_1,n_2)|\leq\alpha_0 \ \forall (n_1,n_2)\in A_\bullet, \\ \notag
			& \quad |{w}_{\star} (n_1,n_2)|\leq \alpha_1, \star=\bullet, \leftrightarrow, \updownarrow \ \forall (n_1,n_2)\in A_\star, \\ \label{kbar} &\qquad\qquad\qquad\qquad\qquad\qquad \bar u =0, \omega=0 \bigr\},
		\end{align}
		and $A = L$ is defined in (\ref{Lop}). Obviously, $\bar{K}$ is non-empty, convex and closed, and therefore, $g$ is proper, convex and lower-semicontinuous. Furthermore, $f$ is convex and continuous. Thus, the assumptions of the Fenchel duality theorem hold.
		
		Now, the optimization problem corresponding to the left hand-side of (\ref{FDT}) corresponds to $\text{TGV}^{2(new)}_\alpha(u)$. To find the right hand-side, i.e., the dual minimization problem, the Fenchel conjugates $f^*, g^*$ are needed, whereas the adjoint operator $A^*=L^*$ is already given in (\ref{Lop}). Thus, consider
                \begin{multline*}
                  f(v,w,s) =-\langle u,s\rangle \\ =\sup_{v^\star, w^\star, s^\star} \left\langle (v,w,s) , (v^\star, w^\star, s^\star) \right\rangle %
                  - I_{\{(0,0,-u)\}}(v^\star, w^\star, s^\star),
                \end{multline*}
		therefore, $f^*(v,w,s)=%
		I_{\{(0,0,-u)\}}(v,w,s)$.
		Since the $1$-norm is the dual of the $\infty$-norm, we get:
                \begin{align*}
                  g^*(
                  v_\bullet,
                  {w}_{\bullet},
                  {w}_{\leftrightarrow},
                  {w}_{\updownarrow},
                  \bar u,
                  \omega
                  ) &= \alpha_0\| v_\bullet\|_1+ \alpha_1\|{w}_{\bullet}\|_1 \\ & \quad +\alpha_1\| {w}_{\leftrightarrow}\|_1+
                  \alpha_1\|{w}_{\updownarrow}\|_1,
                \end{align*}
		where
		\begin{align} \notag
			\|{w}_{\star}\|_1&=\sum_{(n_1,n_2)\in A_\star}|{w}_{\star}(n_1,n_2)|,\\ \notag |{w}_{\star}(n_1,n_2)|&=\sqrt{\sum_{i=1}^2 {w}_{\star}^i(n_1,n_2)^2}, \qquad \star= \bullet, \leftrightarrow, \updownarrow, \\ \notag
				\|{v_\bullet}\|_1&=\sum_{(n_1,n_2)\in A_\bullet}|v_\bullet(n_1,n_2)|,\\ |{v_\bullet}(n_1,n_2)|&=\sqrt{ v_{\bullet}^1(n_1,n_2)^2+v_{\bullet}^2(n_1,n_2)^2+2v_{\bullet}^3(n_1,n_2)^2}.\label{eq:fenchel_dual_1norms}
		\end{align}
		From the Fenchel duality theorem, we get:		
		\begin{align*}
			\text{TGV}_{\alpha}^{2 (new)}(u)= & \min_{v_\bullet, w_{\bullet}, w_{\leftrightarrow}, w_{\updownarrow}, \bar u, \omega} \ \alpha_0\| v_\bullet\|_1+ \alpha_1\|{w}_{\bullet}\|_1 \\ & \qquad\qquad\qquad + \alpha_1\| {w}_{\leftrightarrow}\|_1+
			\alpha_1\|{w}_{\updownarrow}\|_1                                                                                                                                                                                         \\[0.25em]
			\text{subject to} &
			\quad L^*(v_\bullet, w_\bullet, v_\leftrightarrow, v_\updownarrow, \bar u, \omega) = (0, 0, u),
		\end{align*}
		which is equivalent to
		\begin{align*}
			\text{TGV}_{\alpha}^{2 (new)}(u)= & \min_{v_\bullet, w_{\bullet}, w_{\leftrightarrow}, w_{\updownarrow}, \bar u, \omega} \ \alpha_0\| v_\bullet\|_1+ \alpha_1\|{w}_{\bullet}\|_1 \\ & \qquad\qquad\qquad + \alpha_1\| {w}_{\leftrightarrow}\|_1+
			\alpha_1\|{w}_{\updownarrow}\|_1                                                                                                                                                                                         \\[0.25em]
			\text{subject to} &
			\quad
			\left\{\begin{aligned}
				\mathcal{E}^{new} \omega         & = L_\bullet^*v_\bullet,                                                                                 \\
				\mathcal{D}^{new}\bar u - \omega & = L_\bullet^* v_\bullet + L_\leftrightarrow^*v_\leftrightarrow + L_\updownarrow^* v_\updownarrow, \\ u &= \bar u,
			\end{aligned}\right.
		\end{align*}
		leading to the desired statement.
	\end{proof}
	\begin{remark}
		Consider the classic discrete version of TGV in (\ref{tgv}):
		\begin{equation}\label{sim}
			\displaystyle \text{TGV}^2_{\alpha}(u)=\min_{\omega\in (\mathbb{R}^2)^{N_1\times N_2}} \alpha_1\|\mathcal{D}u-\omega\|_1+\alpha_0\|\mathcal{E}\omega\|_1
		\end{equation}
		which can be rewritten to
		\begin{equation}\label{sim2}
			\begin{array}{l}
				\displaystyle \text{TGV}^2_{\alpha}(u)=\min_{w,\omega\in (\mathbb{R}^2)^{N_1\times N_2}} \alpha_1\|w\|_1+\alpha_0\|v\|_1 \\[0.25em]
				\text{subject to}\quad \left\{
				\begin{aligned}
					\mathcal{D}u-\omega & =w, \\
					\mathcal{E}\omega   & =v.
				\end{aligned}                      \right.
			\end{array}
		\end{equation}
		Compare this to the proposed discrete TGV in~\eqref{nrt2}:
		\begin{align} \notag
				&\text{TGV}_{\alpha}^{2(new)}(u)=\min_{v_\bullet,w_\bullet,w_\leftrightarrow,w_\updownarrow,\omega}\alpha_1(\|w_\updownarrow\|_1+\|w_\leftrightarrow\|_1 +\|w_\bullet\|_1) \\ \notag & \qquad\qquad\qquad\qquad\qquad\qquad  +\alpha_0 \|v_\bullet\|_1 \\[0.25em] \label{sim3}
				&\text{subject to}\quad
				\left\{
				\begin{aligned}
					\mathcal{D}^{new}u-\omega & =L_{\bullet}^*w_{\bullet} + L_{\leftrightarrow}^*w_{\leftrightarrow}+L_{\updownarrow}^*w_{\updownarrow}, \\ 
					\mathcal{E}^{new}\omega   & =L^*_\bullet v_\bullet.
				\end{aligned}\right.
                \end{align}
		It can be seen that in the classic discrete TGV, the aim is the minimization of an energy function containing $\alpha_1\|w\|_1$ and $\alpha_0 \|v\|_1,$ where $w$ and $v$ are discrete gradient fields and symmetric matrix fields, respectively.
		For the newly defined discrete TGV (\ref{sim3}), instead of $w$, three gradient fields, $w_\bullet, w_\leftrightarrow, w_\updownarrow$, are used and penalized with the sum of their respective $1$-norms.
		Likewise, $v$ in the classic discrete TGV is replaced by $v_\bullet$ in the proposed TGV. Moreover, instead of the constraints
		$\mathcal{D}u - \omega=w$ and $\mathcal{E}\omega=v$,
		we have the different constraints
		\begin{equation} \label{OO}
			\mathcal{D}^{new}u - \omega=L_{\bullet}^*w_{\bullet}+L_{\leftrightarrow}^*w_{\leftrightarrow}+L_{\updownarrow}^*w_{\updownarrow} \quad \text{and}\quad \mathcal{E}^{new}\omega=L^*_\bullet v_\bullet.
		\end{equation}
		To interpret (\ref{OO}),
		observe that $\mathcal{D}^{new} u - \omega$ is decomposed into $w_\bullet, w_\leftrightarrow, w_\updownarrow$ which live on the grids $A_\bullet, A_\leftrightarrow, A_\updownarrow$, respectively, and are interpolated, as a consequence of Principle~\ref{tec2}, to be compatible with $\mathcal{D}^{new} - \omega$ whose components live on the grid $A_\leftrightarrow$ and $A_\updownarrow$, respectively. Minimizing over the sum of the $1$-norms of $w_\bullet$, $w_\leftrightarrow$ and $w_\updownarrow$ thus asks for an optimal decomposition of $\mathcal{D}^{new} u - \omega$ into vector fields on different grids in terms of the $1$-norm, similar (but not identical) to an infimal convolution.
		Similarly, $\mathcal{E}^{new} \omega$ can be interpreted to be converted to the grid $A_\bullet$ by choosing a $v_\bullet$ which is interpolated to be compatible to $\mathcal{E}^{new} \omega$ and whose $1$-norm is also penalized.
		
		Note that it is not possible to rewrite this problem to a simplified version without the variables $w_\star, \star=\bullet,\leftrightarrow, \updownarrow$, whereas for the classic discrete TGV, we can simplify \eqref{sim2} to~\eqref{sim}.
	\end{remark}
	
	\subsection{Alternative Choices and Extensions}\label{subsec:extensions}
		In the following, we aim at commenting and discussing the choices made for the design of the proposed discrete TGV functional in~\eqref{nrt} as well as possible alternatives and extensions. Recall that our construction depends, on the one hand, on the staggered grid domains in Definition~\ref{def1}, but also on the implementation of Principles~\ref{tec1} and~\ref{tec2}.
	
	{%
		Note that Principle~\ref{tec1} implies an interplay between the used grids and the finite-difference approximation scheme. In this regard, one could employ alternative discrete differentiation schemes that span more than two grid points and have higher accuracy than the employed two-point schemes which are of first order. A central difference scheme would, for instance, be a second-order scheme for which the need of staggered grids does not arise. Using this scheme for a discrete TV and, consequently, for a discrete TGV would consequently be possible without further effort. However, such a choice usually leads to checkerboard-type artifacts in associated variational problems, see Appendix \ref{sec:central_diff} for an example involving central-differences TV. We expect the same effects when designing a discrete TGV with central differences. Also, according to our experience, considering even more grid points in a finite-difference approximation does not mitigate this effect. For this reason, the employed two-point schemes already appear to be a reasonable choice that cannot easily be improved without introducing undesired effects.}
	
	{%
		Nevertheless, an alternative approach to formulate new discrete gradients is considering directional derivatives in more than two directions. Indeed, applying finite differences on staggered grids has been used earlier to improve isotropy for Mumford--Shah-type regularizers and related higher-order models in earlier works (see, for example, \cite{M1,M2,M3}). In \cite{M1} and \cite{M2}, appropriate weights for the
		finite differences in several directions were derived by comparing penalties with ideal (digital) lines. The idea was
		used in \cite{Stor1} to obtain a more isotropic finite-difference discretization of (first-order) TV, both in two and three dimensions. This discretization of TV implicitly uses staggered grids, horizontal, vertical, and diagonal differences and is $90^{\circ}$ rotationally invariant. %
		The discrete total variation introduced in \cite{Stor1} reads as
		\begin{multline}
			\text{TV}_{1}(u) = \sum_{n_1=1}^{N_1}\sum_{n_2=1}^{N_2} \sum_{s=1}^S {\omega_s |u(n_1,n_2)-u((n_1,n_2)+a_s)|},\\ a_1,\ldots, a_S\in\mathbb{Z}^2 \setminus \{0\},
		\end{multline}
		where $\omega_s > 0$, $s=1,\ldots, S$ are suitable weights. However, such a functional is anisotropic in the sense that a continuous counterpart would not be rotationally invariant.
		In \cite{Hosseini2}, an isotropic version is considered whose continuous counterpart is rotationally invariant. It reads as
		\begin{multline}
			\text{TV}_{2}(u) = \sum_{n_1=1}^{N_1}\sum_{n_2=1}^{N_2} \sqrt{\sum_{s=1}^4\bar{\omega}_s \bigl(u(n_1,n_2)-u((n_1,n_2)+a_s)\bigr)^2},\\ a_1,\ldots, a_4\in\mathbb{Z}^2 \setminus \{0\},
		\end{multline}
		where $a_s$ and $\tilde \omega_s > 0$, $s=1,\ldots, 4$ are horizontal, vertical, diagonal vectors and suitable weights, respectively. This idea could potentially be combined with Condat's discrete TV model as well as our proposed second-order TGV model. Such a combination would, however, be a topic for future research.} %

	{  {Additionally, inspired by our model, Bogensperger et al. \cite{learnedTGV} suggest that different types of operators can be defined for $L.$
			They proposed a learning model to find the optimal interpolation operators that admit the least squares error, using our model as the initial foundation due to its simplicity and straightforward nature. However, it is important to note that their model does not satisfy the rotational invariance property. Developing a learning algorithm to achieve the best rotationally invariant model could be an interesting new research direction.}\\
		Finally, let us note that it does not pose great challenges to extend the framework to color or multichannel images. In principle, one can proceed as outlined in \cite{brediestgv12} to obtain a classic TV and second-order TGV discretization using discrete vector and tensor fields as well as respective Euclidean and Frobenius norms. An extension of Condat's discrete total variation according to~\eqref{condat} to $C$ channels would arise from considering $u \in (\mathbb{R}^{C})^{N_1 \times N_2}$, $v \in (\mathbb{R}^{2 \times C})^{N_1 \times N_2}$, constructing $\mathcal{D}: (\mathbb{R}^{C})^{N_1 \times N_2} \to (\mathbb{R}^{2 \times C})^{N_1 \times N_2}$ as channelwise application of the discrete gradient operator and taking the Euclidean scalar product for $v_1(n_1,n_2), v_2(n_1,n_2) \in \mathbb{R}^C$. Further, $L_\bullet$, $L_\leftrightarrow$ and $L_\updownarrow$ would also have to be considered channelwise and the norm in the constraints $|L_\star v(n_1,n_2)| \leq 1$, $\star = \bullet, \leftrightarrow, \updownarrow$ would have to be the Frobenius norm for $\mathbb{R}^{2 \times C}$ matrices. An extension of the proposed TGV model according to~\eqref{nrt} to $C$ channels is then analogous. This means that the discrete function spaces have to be replaced by versions that map into $\mathbb{R}^C$, such as $\mathcal{U}_\bullet = \{u: A_\bullet \to \mathbb{R}^C\}$ and so on. The operators $L_\star$, $\star = \bullet, \leftrightarrow, \updownarrow,$ $\text{div}^{new},$ and $\text{Div}^{new},$ then have to operate channelwise, while in the norms according to~\eqref{eq:newtgv_norms}, the square terms have to be replaced by the squared Euclidean norm in $\mathbb{R}^C$. In this case, Theorem~\ref{theorem3} holds analogously with a representation~\eqref{nrt2} where $\mathcal{D}^{new}$, $\mathcal{E}^{new}$ and $L_\star^*$, $\star = \bullet, \leftrightarrow, \updownarrow$ operate channelwise and norms according to~\eqref{eq:fenchel_dual_1norms}, where the squared terms have to be replaced by the squared Euclidean norm in $\mathbb{R}^C$. As the subsequent results and algorithms also extend according to these straightforward principles, we will limit the discussion to single-channel images.}
	
	\section{A Basic Invariance Property}
	\label{invariance}
	In the following, we prove that the new proposed discrete TGV is $90^\circ$ rotationally invariant, which can be expected as a consequence of the proposed building blocks. However, as mentioned before, this property is not fulfilled for the classic discrete second-order TGV. For this purpose, denote by $A_\bullet^\perp$, $A_\leftrightarrow^\perp$, $A_\updownarrow^\perp$, $\bar{A}_\bullet^{x\perp}$, $\bar{A}_\bullet^{y\perp}$ and $A_\times^\perp$ the grids according to Definition~\ref{def1} with $N_1$ and $N_2$ interchanged. The resulting function spaces will also be marked with a ${}^\perp$, i.e., $\mathcal{U}_\bullet^\perp$ for the functions on $A_\bullet^\perp$ and so on. Since there will be no chance of confusion, we will use the same notation for the operators on the function spaces involving original and rotated grids such as $\mathcal{D}^{new}$, $\mathcal{E}^{new}$, $L_\bullet^*$, $L_\leftrightarrow^*$, $L_\updownarrow^*$ etc.
	\begin{theorem}[$90^\circ$ isotropy]\label{isot}
		 Let $u\in \mathcal{U}_\bullet$ and let $\mathcal{R}u\in \mathcal{U}_\bullet^\perp$ be the $90^\circ$ rotated image, that is,
		$u$ applied to $\mathcal{R}: \mathcal{U}_\bullet \to \mathcal{U}_\bullet^\perp$,
		the $90^\circ$ rotation operator mapping $u \in U_\bullet$ to
                \begin{multline*}
                  \mathcal{R}u(n_1,n_2)=u(n_2,N_2-n_1+1), \\
                  n_1=1,2,\ldots, N_2, \  n_2=1,2,\ldots, N_1.
                \end{multline*}
		Then, $\text{TGV}_{\alpha}^{2 (new)}(\mathcal{R}u)=\text{TGV}_{\alpha}^{2 (new)}(u)$ where the functional has to be understood in the respective
		domain.
	\end{theorem}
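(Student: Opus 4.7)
The plan is to exhibit a value-preserving bijection between the feasible sets of the max-problem~\eqref{nrt} defining $\text{TGV}_\alpha^{2(new)}(u)$ and the corresponding max-problem defining $\text{TGV}_\alpha^{2(new)}(\mathcal{R}u)$. To do so, I would first introduce companion $90^\circ$ rotation maps on every staggered grid space appearing in the construction, namely on $\mathcal{U}_\bullet, \mathcal{U}_\leftrightarrow, \mathcal{U}_\updownarrow, \bar{\mathcal{U}}_\bullet^x, \bar{\mathcal{U}}_\bullet^y$ and $\mathcal{U}_\times$. Writing $R(n_1,n_2) = (n_2, N_2-n_1+1)$ for the index transformation, the scalar rotation is given by pullback, $(\mathcal{R}u)(n_1,n_2) = u(R(n_1,n_2))$. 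Crucially, the $90^\circ$ rotation swaps the horizontal staggering with the vertical one, so $\mathcal{R}$ sends $\mathcal{U}_\leftrightarrow \to \mathcal{U}_\updownarrow^\perp$ and $\mathcal{U}_\updownarrow \to \mathcal{U}_\leftrightarrow^\perp$; likewise $\bar{\mathcal{U}}_\bullet^x \leftrightarrow \bar{\mathcal{U}}_\bullet^y$ (with obvious ${}^\perp$ decoration) and $\mathcal{U}_\times \to \mathcal{U}_\times^\perp$. On vector fields in $\mathcal{U}_\leftrightarrow\times\mathcal{U}_\updownarrow$ and tensor fields in $\bar{\mathcal{U}}_\bullet^x \times \bar{\mathcal{U}}_\bullet^y \times \mathcal{U}_\times$ the rotation additionally has to permute and sign-flip the components the way a $90^\circ$ rotation of vectors/tensors does: $(w_1,w_2)\mapsto(-\mathcal{R}w_2,\mathcal{R}w_1)$ for vectors and analogously for symmetric tensors (swapping the two diagonal entries and reversing the off-diagonal entry).

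Next I would establish the \emph{equivariance lemma}: each of the operators $\mathcal{D}^{new}$, $\mathcal{E}^{new}$, $\text{div}^{new}$ and $\text{div}^{2new}$ commutes with its rotated counterpart. This is a direct verification, case by case, from Definitions in Section~3.2 and Section~3.3. For instance, $\mathcal{D}^{new}_{x\bullet}$ applied to $\mathcal{R}u$ at $(n_1,n_2)$ involves the difference $u(R(n_1+\tfrac12,n_2))-u(R(n_1-\tfrac12,n_2))$, which up to a sign equals $\mathcal{D}^{new}_{y\bullet}u$ evaluated at $R(n_1,n_2)$; the boundary-zero conventions match because $R$ bijects the pertinent interior/boundary subgrids. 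The same pattern — a swap of the two coordinate directions together with a sign that is absorbed by the vector/tensor-rotation convention — propagates to $\mathcal{E}^{new}$ and to the divergences, which are the negative adjoints. Consequently, the chain $s=\text{div}^{new}w$, $w=\text{div}^{new}v$ is preserved under the rotation: if $(v,w,s)$ is feasible for $u$ then $(\tilde v,\tilde w,\tilde s)$ obtained by applying the corresponding rotation component-wise is feasible for $\mathcal{R}u$.

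I would then verify that the interpolation/conversion operators respect the rotation in a compatible way. By the symmetry of the definitions in~\eqref{operatorL2}, a $90^\circ$ rotation intertwines $L_\leftrightarrow$ with $L_\updownarrow$ and leaves $L_\bullet$ (in both its instances, on vector fields and on tensor fields) equivariant; the relevant componentwise sign swaps preserve the pointwise quantities $\sqrt{(L_\bullet v)_1^2+(L_\bullet v)_2^2+2(L_\bullet v)_3^2}$ and $\sqrt{(L_\star w)_1^2+(L_\star w)_2^2}$ because they are squared. Hence the feasibility constraints $\|L_\bullet v\|_\infty\le\alpha_0$ and $\|L_\star w\|_\infty\le\alpha_1$ for $\star=\bullet,\leftrightarrow,\updownarrow$ are preserved under rotation. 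Finally, the objective $\langle \mathcal{R}u,\tilde s\rangle=\langle \mathcal{R}u,\mathcal{R}s\rangle=\langle u,s\rangle$ since $\mathcal{R}$ is a bijection of $A_\bullet$ onto $A_\bullet^\perp$ as unitary on the respective inner products. Combining these three ingredients, the rotation gives a value-preserving bijection between the two constraint sets, so both maxima coincide, proving $\text{TGV}_\alpha^{2(new)}(\mathcal{R}u)=\text{TGV}_\alpha^{2(new)}(u)$.

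The main obstacle will be the bookkeeping of boundary cases and the exact sign/permutation conventions when rotating vector and tensor fields on different staggered grids. In particular, verifying the equivariance of $\text{div}^{new}$ on $\bar{\mathcal{U}}_\bullet^x\times\bar{\mathcal{U}}_\bullet^y\times\mathcal{U}_\times$ requires inspecting the boundary formulas~\eqref{d+-22}--\eqref{d+-6} to confirm that the swaps $\bar{\mathcal{U}}_\bullet^x\leftrightarrow\bar{\mathcal{U}}_\bullet^y$ together with the sign flip on the off-diagonal tensor component align exactly with the prescribed boundary values. Once this has been carried out — which is tedious but mechanical thanks to the symmetry built into Definitions~\ref{def1}--\ref{def} — the equivariance propagates through compositions (giving equivariance of $\mathcal{D}^{2new}$ and $\text{div}^{2new}$) and the bijection argument yields the theorem.
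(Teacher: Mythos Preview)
Your proposal is correct and follows essentially the same strategy as the paper: introduce companion rotation operators on every staggered grid (with the appropriate component swaps and sign flips for vector and tensor fields), verify equivariance of the differential and interpolation operators, and conclude via a value-preserving bijection between feasible sets. The one notable difference is that you work directly with the \emph{max} formulation~\eqref{nrt}, whereas the paper carries out the argument on the Fenchel-dual \emph{min} formulation~\eqref{nrt2} from Theorem~\ref{theorem3}. Consequently the paper checks equivariance of $\mathcal{D}^{new}$, $\mathcal{E}^{new}$ and the \emph{adjoint} interpolation operators $L_\star^*$, and concludes by observing that the $1$-norms in the objective are preserved by $\bar{\mathcal{R}}$ and $\tilde{\mathcal{R}}$; you instead check equivariance of the divergences and of $L_\star$ themselves, and close with $\langle \mathcal{R}u,\mathcal{R}s\rangle=\langle u,s\rangle$. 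Your route is marginally more self-contained (it does not invoke Theorem~\ref{theorem3}), while the paper's route makes the norm-preservation step slightly more transparent; otherwise the two arguments are dual to one another and require the same amount of boundary bookkeeping.
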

	\begin{proof}
		First note that the reparametrization $(n_1, n_2) \mapsto (n_2, N_2+1-n_1)$ is a bijection when mapping as follows: $A_\bullet \to A_\bullet^\perp$, $A_\leftrightarrow \to A_\updownarrow^\perp$, $A_\updownarrow \to A_\leftrightarrow^\perp$, $\bar A_\bullet^x \to \bar A_\bullet^{y\perp}$, $\bar A_\bullet^y \to \bar A_\bullet^{x\perp}$ and $A_\times \to A_\times^\perp$. Consequently, $\mathcal{R}$ considered as a map between $\mathcal{U}_\bullet \to \mathcal{U}_\bullet^\perp$ is a linear isomorphism. The same applies to the analogous versions, i.e., $\mathcal{R}: \mathcal{U}_\leftrightarrow \to \mathcal{U}_\updownarrow^\perp$ etc. With these preparations, we see, for instance, for $u \in \mathcal{U}_\bullet$ that
		\begin{align} \notag
				(\mathcal{D}_{x\bullet}^{new}\mathcal{R}u)&(n_1,n_2) =
				(\mathcal{R}u)(n_1+\tfrac12, n_2) - (\mathcal{R}u)(n_1 - \tfrac12, n_2) \\ \notag &= u(n_2, N_2 + 1 - n_1 - \tfrac12) - u(n_2, N_2 + 1 - n_1 + \tfrac12) \\ \notag &=
				-(\mathcal{D}_{y\bullet}^{new}u)(n_2, N_2 + 1 - n_1) \\ &=
				- (\mathcal{R}\mathcal{D}_{y\bullet}^{new}u)(n_1, n_2),
			\end{align}
		for $\tfrac32 \leq n_1 \leq N_2 - \tfrac12$, $1 \leq n_2 \leq N_1$.
		Also considering the boundary cases, it is easy to conclude that
		$\mathcal{D}_{x\bullet}^{new}\mathcal{R} = -\mathcal{R}\mathcal{D}_{y\bullet}^{new}$. Likewise,
			\begin{align} \notag
				(\mathcal{D}_{y\bullet}^{new}\mathcal{R}u)&(n_1,n_2) =
				(\mathcal{R}u)(n_1, n_2+\tfrac12) - (\mathcal{R}u)(n_1, n_2 - \tfrac12) \\ \notag &= u(n_2 + \tfrac12, N_2 + 1 - n_1) - u(n_2 - \tfrac12, N_2 + 1 - n_1) \\ \notag &=
				(\mathcal{D}_{x\bullet}^{new}u)(n_2, N_2 + 1 - n_1) \\ &=
				(\mathcal{R}\mathcal{D}_{x\bullet}^{new}u)(n_1, n_2),
			\end{align}
		for $1 \leq n_1 \leq N_2$, $\tfrac32 \leq n_2 \leq N_1 - \tfrac12$,
		allowing us to conclude analogously that $\mathcal{D}_{y\bullet}^{new}\mathcal{R} = \mathcal{R}\mathcal{D}_{x\bullet}^{new}$. Thus, with the linear isomorphism $\bar{\mathcal{R}}: \mathcal{U}_{\leftrightarrow} \times \mathcal{U}_{\updownarrow} \to \mathcal{U}_{\leftrightarrow}^\perp \times \mathcal{U}_{\updownarrow}^\perp$ according to $\bar{\mathcal{R}}(w_1,w_2) = (-\mathcal{R}w_2, \mathcal{R}w_1)$, we have $\mathcal{D}^{new}\mathcal{R} = \bar{\mathcal{R}}\mathcal{D}^{new}$.
		
		Considerations that are completely analogous also lead to the identities $\mathcal{D}_{x\star}^{new} \mathcal{R} = - \mathcal{R}\mathcal{D}^{new}_{y\star}$,
		$\mathcal{D}^{new}_{y\star} \mathcal{R} = \mathcal{R}\mathcal{D}^{new}_{x\star}$ for $u \in \mathcal{U}_{\star}$, $\star = \leftrightarrow, \updownarrow$.
		Thus, for $w \in \mathcal{U}_\leftrightarrow \times \mathcal{U}_\updownarrow$ we see that
		\begin{equation}
			\mathcal{E}^{new}\bar{\mathcal{R}}w = \left(
			\begin{array}{c}
				\mathcal{R} \mathcal{D}^{new}_{y\updownarrow} w_2    \\
				\mathcal{R} \mathcal{D}^{new}_{x\leftrightarrow} w_1 \\
				- \tfrac12 (\mathcal{R} \mathcal{D}^{new}_{x\updownarrow} w_2 + \mathcal{R} \mathcal{D}^{new}_{y\leftrightarrow} w_1)
			\end{array}
			\right) = \tilde{\mathcal{R}} \mathcal{E}^{new}w,
		\end{equation}
		where the linear isomorphism $\tilde{\mathcal{R}}: \bar{\mathcal{U}}_\bullet^x \times \bar{\mathcal{U}}_\bullet^y \times {\mathcal{U}}_\times \to \bar{\mathcal{U}}_\bullet^{x\perp} \times \bar{\mathcal{U}}_\bullet^{y\perp} \times {\mathcal{U}}_\times^\perp$ is given by $\tilde{\mathcal{R}} = (\mathcal{R}v_2, \mathcal{R}v_1, -\mathcal{R}v_3)$.
		
		Let us now discuss how the operators $L_\star^*$, $\star = \bullet, \leftrightarrow, \updownarrow$ behave under rotation. For instance, for $w_\bullet \in \mathcal{U}_\bullet \times \mathcal{U}_\bullet$ we have
		\begin{align} \notag
			(\bar{\mathcal{R}} L_\bullet^* w_\bullet)(n_1,n_2) &=
			\left(
			\begin{array}{l}
				-\tfrac12(w_\bullet^2(n_2 + \tfrac12, N_2 + 1 - n_1) \\ \qquad\qquad + w_\bullet^2(n_2 - \tfrac12, N_2 + 1 - n_1)) \\
				\tfrac12(w_\bullet^1(n_2, N_2 + 1 - n_1 - \tfrac12) \\ \qquad\qquad + w_\bullet^1(n_2, N_2 + 1 - n_1 + \tfrac12))
			\end{array}
			\right) \\ &= (L_\bullet^* \bar{\mathcal{R}} w_\bullet)(n_1,n_2)
		\end{align}
		for $\tfrac32 \leq n_1 \leq N_2 - \tfrac12$, $\tfrac32 \leq n_2 \leq N_1 - \tfrac12$ where $\bar{\mathcal{R}}$ on the right-hand side has to be understood, analogous to the above, as a mapping $\mathcal{U}_\bullet \times \mathcal{U}_\bullet \to \mathcal{U}_\bullet^\perp \times \mathcal{U}_\bullet^\perp$. Taking also the boundary cases into account, we are able to conclude that $\bar{\mathcal{R}}L_\bullet^* = L_\bullet^*\bar{\mathcal{R}}$. With the same reasoning, we also get that $\bar{\mathcal{R}}L_\leftrightarrow^* = L_\updownarrow^*\bar{\mathcal{R}}$ as well as $\bar{\mathcal{R}}L_\updownarrow^* = L_\leftrightarrow^*\bar{\mathcal{R}}$. This also applies to $L_\bullet^*$ given for $v \in \mathcal{U}_\bullet \times \mathcal{U}_\bullet \times \mathcal{U}_\bullet$ for which the identity $\tilde{\mathcal{R}}L_\bullet^* = L_\bullet^* \tilde{\mathcal{R}}$ holds for $\tilde{\mathcal{R}}$ on the right-hand side mapping $\mathcal{U}_\bullet \times \mathcal{U}_\bullet \times \mathcal{U}_\bullet \to \mathcal{U}_\bullet^\perp \times \mathcal{U}_\bullet^\perp \times \mathcal{U}_\bullet^\perp$.
		
		For the $u \in \mathcal{U}_\bullet$ given in the statement of the
		theorem, consider $(v_\bullet, w_\bullet, w_\leftrightarrow, w_\updownarrow, \omega)$ as well as
		\begin{equation}
			(v_\bullet^\perp, w_\bullet^\perp, w_\leftrightarrow^\perp, w_\updownarrow^\perp, \omega^\perp) = (\tilde{\mathcal{R}}v_\bullet, \bar{\mathcal{R}} w_\bullet,  \bar{\mathcal{R}} w_\updownarrow, \bar{\mathcal{R}} w_\leftrightarrow, \bar{\mathcal{R}}\omega).
		\end{equation}
		Now, using the above identities, we can see that
			\begin{align} \notag
				&  & \mathcal{D}^{new} u - \omega                             & = L_\bullet^* w_\bullet + L_\leftrightarrow^* w_\leftrightarrow + L_\updownarrow^* w_\updownarrow \\ \notag
  & \Leftrightarrow
				& \bar{\mathcal{R}} \mathcal{D}^{new} u - \bar{\mathcal{R}}\omega & = \bar{\mathcal{R}}L_\bullet^* w_\bullet + \bar{\mathcal{R}}L_\leftrightarrow^* w_\leftrightarrow + \bar{\mathcal{R}} L_\updownarrow^* w_\updownarrow            \\ \notag
				& \Leftrightarrow                                                       & \mathcal{D}^{new}\mathcal{R} u - \bar{\mathcal{R}}\omega & = L_\bullet^* \bar{\mathcal{R}}w_\bullet +  L_\updownarrow^* \bar{\mathcal{R}}w_\leftrightarrow +  L_\leftrightarrow^* \bar{\mathcal{R}}w_\updownarrow \\ 			\label{deg90_1}
                                  & \Leftrightarrow                                          & \mathcal{D}^{new} \mathcal{R} u - \omega^\perp                                                                                                         & = L_\bullet^* w_\bullet^\perp + L_\leftrightarrow^* w_\leftrightarrow^\perp + L_\updownarrow^* w_\updownarrow^\perp,
			\end{align}
		as well as
                        \begin{align} \notag
                          \mathcal{E}^{new} \omega &= L_\bullet^* v_\bullet &
                          \Leftrightarrow & & \tilde{\mathcal{R}}\mathcal{E}^{new}\omega
                          &= \tilde{\mathcal{R}} L_\bullet^* v_\bullet  \\ 			\label{deg90_2}
                          \Leftrightarrow \quad \mathcal{E}^{new}\bar{\mathcal{R}}
                          \omega &= L_\bullet^* \tilde{\mathcal{R}} v_\bullet
                          & \Leftrightarrow & &
                          \mathcal{E}^{new} \omega^\perp &= L_\bullet^* v_\bullet^\perp.
                        \end{align}
		Consequently, $(v_\bullet, w_\bullet, w_\leftrightarrow, w_\updownarrow, \omega)$ is feasible for~\eqref{nrt2} if and only if $(v_\bullet^\perp, w_\bullet^\perp, w_\leftrightarrow^\perp, w_\updownarrow^\perp, \omega^\perp)$ is feasible for~\eqref{nrt2} with $u$ replaced by the rotated image $\mathcal{R}u$.
		Finally, it is easy to see that $\bar{\mathcal{R}}$ and $\tilde{\mathcal{R}}$ preserve the $1$-norm such that
		\begin{multline}
			\label{deg90_3}
			\alpha_0 \|v_\bullet\|_1 + \alpha_1 \|w_\bullet\|_1 + \alpha_1 \|w_\leftrightarrow\|_1 + \alpha_1\|w_\updownarrow\|_1 \\ =
			\alpha_0 \|v_\bullet^\perp\|_1 + \alpha_1 \|w_\bullet^\perp\|_1 + \alpha_1 \|w_\leftrightarrow^\perp\|_1 + \alpha_1\|w_\updownarrow^\perp\|_1.
		\end{multline}
		With the latter three statements, i.e., \eqref{deg90_1}, \eqref{deg90_2} and \eqref{deg90_3}, the identity $\text{TGV}^{2(new)}_{\alpha}(u) = \text{TGV}^{2(new)}_{\alpha}(\mathcal{R}u)$ then follows directly from~\eqref{nrt2}.
	\end{proof}
	\section{Numerical Algorithms and Application to Image Restoration}
        \label{sec:algorithms}
	{In the following, we present numerical algorithms related to the proposed discrete TGV for solving inverse problems, including denoising and upscaling. We compare the restored image results with several discrete variational models. Additionally, to demonstrate the invariance property of the proposed model, we show that the denoising result remains unchanged after rotating the image.} Moreover, for some test images and their $90^{\circ}$ rotated versions, we compute and compare the value of the TGV for the proposed model and the classic discrete TGV. {The experimental MATLAB code that reproduces all materials is provided on Mendeley Data \cite{Hosseini_data}.}
	{\subsection{Formulation of Discrete Inverse Problems}
		Here, we formulate the discrete inverse problems utilizing variational models, and for denoising, and upscaling, we evaluate our proposed discrete TGV and compare it to classic discrete TV, Condat's TV, Shannon TGV, and the classic discretization of TGV.
		Consider the general form of the inverse problem}
	\begin{equation}\label{DNP}
		\min_{u\in \mathbb{R}^{N_1\times N_2}} \mathcal{F}(u)+ \mathcal{R}(u),
	\end{equation}
	which is the discrete form of the variational problem (\ref{OP}). In this formulation, {$\mathcal{F}(u)=\frac{1}{2}\|\mathcal{B}u-f\|^2$ for some degraded image
		$f\in Range(\mathcal{B})$ and linear operator $\mathcal{B}$.}
	Moreover, we can set any discrete total variation model or discrete second-order TGV model for $\mathcal{R}$. We consider the following five regularized problems:
	\begin{equation}\label{DPS}
			\begin{array}{ll}
				(a)\ \text{Classic TV }   & \min_u \mathcal{F}(u) + \lambda \text{TV}(u),            \\
				(b)\ \text{Condat's TV }  & \min_u \mathcal{F}(u) + \lambda \text{TV}_c(u),          \\
				(c)\ \text{$2^{nd}$ order Shannon TGV}\   & \min_u \mathcal{F}(u) + \text{TGV}^{2(\alpha)}_{\text{SH} (2)}(u),      \\
				(d)\ \text{Classic TGV }  & \min_u \mathcal{F}(u) + \text{TGV}_{\alpha}^2(u),        \\
				(e)\ \text{Proposed TGV} & \min_u \mathcal{F}(u) + \text{TGV}_{\alpha}^{2(new)}(u),
			\end{array}
	\end{equation}
	where $\lambda, \alpha_0, \alpha_1 >0$. As the numerical algorithms for solving problems (\ref{DPS}) $(a)$--$(d)$ have already been studied in the literature, we only focus here on describing a suitable algorithm for solving problem (\ref{DPS}) $(e)$. From Theorem \ref{t1}, it is easy to see that problem (\ref{DPS}) $(e)$ is equivalent to the following problem:
	\begin{equation}\label{pdm}
		\begin{array}{l}
                  \displaystyle \min_{v_\bullet, w_{\bullet}, w_{\leftrightarrow}, w_{\updownarrow}, u, \omega} \ \mathcal{F}(u)+\alpha_0\| v_\bullet\|_1+ \alpha_1\|{w}_{\bullet}\|_1 \\ \displaystyle \qquad\qquad\qquad\quad
                  +\alpha_1\| {w}_{\leftrightarrow}\|_1+
			\alpha_1\|{w}_{\updownarrow}\|_1 \\[0.5em]
			\quad\text{subject to} \qquad \bar L^* (
			v_\bullet,
			w_{\bullet},
			w_{\leftrightarrow},
			w_{\updownarrow},
			u,
			\omega
			)^T=0,
		\end{array}
	\end{equation}
	where $\bar L^* = \left(\begin{array}{cccccc} L_\bullet^* & 0 & 0 & 0 & 0 & -\mathcal{E}^{new} \\ 0 & L_\bullet^* & L_{\leftrightarrow}^* & L_{\updownarrow}^* & -\mathcal{D}^{new} & I
	\end{array}\right)$.
	In the numerical experiments below, we employ the Chambolle--Pock algorithm \cite{pridua} (see Algorithm \ref{algorithm1}). The algorithm generally can be used to solve the following optimization problem:
	\begin{equation}\label{conalg}
		\min_z \ {F}(Az) + {G}(z),
	\end{equation} and its dual form
	\begin{equation}
		\min_y \ {F}^*(y)+{G}^*(-A^*y),
	\end{equation}
	where $A: X \to Y$ is a linear and continuous operator, ${F}:Y\rightarrow {]{-\infty,\infty}]}$ and
	${G}:X\rightarrow {]{-\infty,\infty}]}$
	are proper, convex and lower semicontinuous functions whose corresponding proximal operators have simple forms or can easily be calculated. The algorithm is guaranteed to converge to a primal-dual solution pair, provided that a primal-dual solution exists, there is no duality gap and that $\sigma >0$, $\tau > 0$ satisfy  $\sigma \tau < \frac{1}{||A||^2}$. In practical situations where computing the exact value of $||A||$ is difficult, finding an upper bound $B > ||A||^2$ and setting $\sigma = \tau = \frac{1}{\sqrt{B}}$ is sufficient for convergence.
	\begin{algorithm}[t]
          \caption{The Chambolle--Pock algorithm for solving
            problem (\ref{conalg}).}\label{algorithm1}          
          \begin{algorithmic}[0]
            \Require $A, A^*, {F}^*, {G}$
            \Ensure For iteration number $N$, ${z}^{N}$ primal solution approximation and $y^N$ dual solution approximation
            \Initialization Choose parameters
            $\sigma >0$, $\tau>0$ with $\sigma\tau\|A\|^2 < 1$
            and initial estimates
            $(z^0, y^0) \in X\times Y, \tilde{z}^0=z^0$
            \While{convergence criterion not met, for $k=0,1,\ldots$}
              \State $ y^{k+1} = \text{prox}_{\sigma {F}^*} (y^k + \sigma A\tilde{z}^k)$
               \State $z^{k+1} = \text{prox}_{\tau {G}} (z^k - \tau A^* y^{k+1})$
               \State $\tilde{z}^{k+1} = 2z^{k+1} - z^k$
              \EndWhile
            \end{algorithmic}
          \end{algorithm}
	Assume $\mathcal{B}$ is a suitable linear operator. In this paper, $\mathcal{B}=I,$ for denoising, and for upscaling, $\mathcal{B}$ is set as downscaling operator.   In our case,  $f \in Range(\mathcal{B}),$ is a degraded image (noisy or downscaled image), and
		\begin{align*}
                  z&= (v_\bullet, w_{\bullet}, w_{\leftrightarrow}, w_{\updownarrow},u,\omega) \in X, \\
                  X &= \mathcal{U}_\bullet^3 \times \mathcal{U}_\bullet^2 \times \mathcal{U}_\leftrightarrow^2 \times \mathcal{U}_\updownarrow^2 \times \mathcal{U}_\bullet \times (\mathcal{U}_\leftrightarrow \times \mathcal{U}_\updownarrow), \\
			{G}(z)                                                                    & =\alpha_0\| v_\bullet\|_1+ \alpha_1\|{w}_{\bullet}\|_1+\alpha_1\| {w}_{\leftrightarrow}\|_1+
			\alpha_1\|{w}_{\updownarrow}\|_1.
		\end{align*}
		Furthermore, set $A = \left(\begin{array}{cccccc} L_\bullet^* & 0 & 0 & 0 & 0 & -\mathcal{E}^{new} \\ 0 & L_\bullet^* & L_{\leftrightarrow}^* & L_{\updownarrow}^* & -\mathcal{D}^{new} & I \\
			0 & 0 & 0 & 0 &\mathcal{B} & 0
		\end{array}\right)$, and %
              \begin{align*}
		y &=\left(\begin{array}{c}
                  v \\
                  w\\
                  s
		\end{array}\right) \in Y, \\ Y &= (\bar{\mathcal{U}}_\bullet^x \times \bar{\mathcal{U}}_\bullet^y \times \mathcal{U}_\times) \times (\mathcal{U}_\leftrightarrow \times \mathcal{U}_\updownarrow) \times Range(\mathcal{B}), \\ {F}(y)&=\frac{1}{2} \|s-f\|^2+I_{\{(0,0)\}}(v,w).
              \end{align*}
		It is not difficult to see that
		$$\text{prox}_{\sigma {F}^*}(v, w, s)=\left(v,w,\frac{s-f}{1+\sigma}\right), \quad (v,w,s)\in Y.$$
	Moreover, it is well known that the proximal operator of the $1$-norm is the so-called \emph{shrinkage operator} according to
        \begin{align*}
          \text{prox}_{\tau\|\cdot\|_1}(w)(n_1,n_2) &= \text{shrink}_{\tau}(w)(n_1,n_2) \\ &= \left(1-\frac{\tau}{\max\{|w(n_1,n_2)|,\tau\}}\right)w(n_1,n_2),
        \end{align*}
	for $w \in \mathcal{U}_\star \times \mathcal{U}_\star$, $\star = \bullet, \leftrightarrow, \updownarrow$ where $|w(n_1,n_2)| = \sqrt{w^1(n_1,n_2)^2 + w^2(n_1,n_2)^2}$. The proximal mapping of $\tau \|\cdot\|_1$ for $v \in \mathcal{U}_\bullet \times \mathcal{U}_\bullet \times \mathcal{U}_\bullet$ is given analogously with $|v(n_1,n_2)| = \sqrt{v^1(n_1,n_2)^2 + v^2(n_1,n_2)^2 + 2v^3(n_1,n_2)^2}$.
	It can easily be verified that $\text{prox}_{\tau \mathcal{G}}(v_\bullet,w_\bullet,w_\leftrightarrow,w_\updownarrow,u,\omega)=(\bar{v}_\bullet,\bar{w}_\bullet, \bar{w}_\leftrightarrow, \bar{w}_\updownarrow,\bar{u},\bar{\omega})$ where
	\begin{multline}\label{prox}
				\bar{v}_\bullet= \text{shrink}_{\alpha_0 \tau}(v_\bullet), \quad
				\bar{w}_\star = \text{shrink}_{\alpha_1 \tau}(w_\star), \quad \star = \bullet, \leftrightarrow, \updownarrow, \\ 
				\displaystyle \bar{u} = u, \quad \bar\omega = \omega.
		\end{multline}
	Based on above functionals and parameters, Algorithm \ref{algorithm2} is proposed to solve the inverse problem (\ref{pdm}) as well as its dual form
	which can equivalently be written as the following optimization problem:
	\begin{equation}\label{pdm2}
			\begin{array}{l}
				\displaystyle \min_{v,s} \tfrac{1}{2}\|s\|_2^2+\langle s,f\rangle \\
				\text{subject to} \quad
				\left\{\begin{aligned}
					\|L_\bullet v\|_\infty & \leq \alpha_0,       \\ \|L_\star w\|_\infty&\leq \alpha_1, \star=\bullet,\leftrightarrow, \updownarrow, \\
					\mathcal{B}^* s                      & = \text{div}^{2new}v.
				\end{aligned}\right.
			\end{array}
		\end{equation}
	\begin{algorithm}[t]
            \caption{An algorithm for solving the proposed TGV inverse problem (\ref{pdm}) and its dual form (\ref{pdm2}).} \label{algorithm2}
            \begin{algorithmic}
              \Require Degraded image {$f \in Range(\mathcal{B})$}, %
                $\alpha=(\alpha_0,\alpha_1), \alpha_0 > 0, \alpha_1 > 0$ %
              \Ensure For the iteration number $N$, $z^N=(v_\bullet^N, w_\bullet^N, w_\leftrightarrow^N, w_\updownarrow^N, u^N, \omega^N)$ is an approximation of the solution of the primal problem (\ref{pdm}) and $u^N$ is the obtained reconstructed			image. Moreover, $y^N=(v^N, w^N, s^N)$ is an approximation of a solution of the dual problem (\ref{pdm2})
            \Initialization  Choose $\sigma > 0$ and $\tau > 0$ such that $\sigma\tau \|A\|^2 < 1$, $A = \left(\begin{array}{cccccc} L_\bullet^* & 0 & 0 & 0 & 0 & -\mathcal{E}^{new} \\ 0 & L_\bullet^* & L_{\leftrightarrow}^* & L_{\updownarrow}^* & -\mathcal{D}^{new} & I \\
			0 & 0 & 0 & 0 &\mathcal{B} & 0
		\end{array}\right)$
            \State
            Choose the initial approximation $u^0=\tilde{u}^0 \in\mathcal{U}_\bullet$
            \State
            Choose an arbitrary initial approximation of a primal solution $v_\bullet^0 = \tilde{v}_\bullet^0 \in \mathcal{U}_\bullet \times \mathcal{U}_\bullet \times \mathcal{U}_\bullet$, $w_\star^0= \tilde{w}_{\star}^0 \in \mathcal{U}_{\star}\times \mathcal{U}_{\star}$, $\star = \bullet, {\leftrightarrow,} \updownarrow$, $\omega^0 = \tilde{\omega}^0 \in\mathcal{U}_{\leftrightarrow}\times \mathcal{U}_{\updownarrow}$
            \State
                Choose an arbitrary initial approximation of a dual solution $v^0\in\bar{\mathcal{U}}_{\bullet}^x\times \bar{\mathcal{U}}_{\bullet}^y\times \mathcal{U}_\times$, $w^0\in \mathcal{U}_{\leftrightarrow}\times \mathcal{U}_{\updownarrow}, s^0\in Range (\mathcal{B})$ 
                \While{convergence criterion not met, for\ $k=0,1,\ldots$}
                  \State $v^{k+1}=v^k+\sigma(L^*_\bullet\tilde v_\bullet^k-\mathcal{E}^{new}\tilde\omega^k)$
                    \State $w^{k+1}=w^k+\sigma(L^*_\bullet \tilde w_\bullet^k + L^*_\leftrightarrow \tilde w_\leftrightarrow^k + L^*_\updownarrow \tilde w_\updownarrow^k - \mathcal{D}^{new}\tilde u^k +\tilde\omega^k)$
                    \State $s^{k+1}=\frac{s^k+\sigma(\mathcal{B}u-f)}{1+\sigma}$
                    \ForAll{$\star = \bullet, \leftrightarrow, \updownarrow$}
                    \State $v_\bullet^{k+1} = \text{shrink}_{\alpha_0\tau}(v_\bullet^k - \tau L_\bullet v^{k+1})$ \State $w_\star^{k+1} = \text{shrink}_{\alpha_1\tau}(w_\star^k - \tau L_\star w^{k+1})$ \EndFor
                    \State $\displaystyle u^{k+1} ={u^k - \tau (\text{div}^{new} w^{k+1} + \mathcal{B}^*s)}$ \State  $\omega^{k+1} = \omega^k - \tau(\text{Div}^{new} v^{k+1} + w^{k+1})$
                    \State
                    $\tilde{v}_\bullet^{k+1}=2v_\bullet^{k+1}-v_\bullet^k$ %
                    \ForAll{$\star=\bullet, \leftrightarrow, \updownarrow$} \State $\tilde{w}_\star^{k+1}=2w^{k+1}_\star- w_\star^k$ \EndFor
                    \State $\tilde{u}^{k+1} = 2u^{k+1} - u^k$, $\tilde\omega^{k+1} = 2\omega^{k+1} - \omega^k$
               \EndWhile            
              \end{algorithmic}
            \end{algorithm}
            
	We also applied primal-dual algorithms for the four different variational models (\ref{DPS}) $(a)$--$(d)$ to solve {some} image {reconstruction} problems and compared the results with the newly proposed discrete TGV (problem (\ref{DPS}) $(e)$). {%
		An algorithm description for problems (\ref{DPS}) $(a)$--$(c)$ as well as a rough estimate of the computational complexity {for all these algorithms, and our proposed one, to solve the denoising problem (i.e., $\mathcal{B}=I$)} can be found in Appendix \ref{sec:alg_comp}.
		There, one can see that the number of basic floating-point operations (flops) for the new proposed model is about $1.8$ times the number of flops of the second-order TGV and Condat-TV. This fact is also in accordance with the observed CPU times and confirms that the computational complexity of the proposed model is acceptable.}

	In {all} algorithms, we need {to fix} the two parameters $\sigma > 0$ and $\tau > 0$ to satisfy $\sigma\tau \|A\|^2 < 1$. For  classic discrete TV and Condat's TV, we set $\tau=\frac{0.99}{8}, \sigma=\frac{0.99}{3}$ as recommended in \cite{condat1} and for both discrete TGV models, we set $\tau=\sigma=\frac{5}{37}.$ Moreover, we have incorporated the  minimum iteration number in our simulations. We stop the algorithm when the iteration number reaches 5000. The parameters
		of each model are optimized to achieve the best reconstruction with respect to
		the peak signal-to-noise ratio (PSNR) or structural similarity index measure (SSIM). In other words, with $u^\dagger$ denoting the reference image, for instance, in the proposed model (\ref{DPS}) ($e$), parameters are obtained by solving
		\begin{align} \notag
			\alpha_1^* \in \text{argmax}_{\alpha_1>0}\left\{\right.&\text{PSNR}/\text{SSIM}(u^*_{\alpha}, u^\dagger),  \\ \notag &\alpha=(\alpha_0,\alpha_1), \alpha_0=2\alpha_1, \\ \label{lambda} & \left.u^*_{\alpha}\ \text{ is the solution of}\ (\ref{DPS})\, (d)\right\},
	\end{align}
	In our computational experiments, we set $\alpha_0^*=2\alpha_1^*$.   We determine $\alpha_1^*$ by exhaustive search over a suitable regular grid within a finite interval. Note that here, we fix the ratio $\alpha_0^*/\alpha_1^* = 2$ for the TGV-based models, which could, of course, also be optimized.
	\subsection{Denoising}
	For the {test} images, %
	whose intensity values are stored as double-precision floating-point numbers in the range $[0,1]$, we artificially produce a noisy image by adding Gaussian noise with 0 mean and a fixed standard deviation to the respective clean image.
	We consider two criteria to compare the results{:} accuracy and the ability to remove artifacts (especially, the staircase effect). %
	In \cite{condat1}, Condat shows that the discrete total variation model developed in this work has better quality in terms of accuracy and isotropy in comparison with state-of-the-art discrete total variation models. Moreover, classic discrete TGV \cite{bredies} is a variational model whose experimental results show that it is very efficient in the sense of reducing artifacts such as the staircase effect.\\	
	{To illustrate visually distinctive features of the methods, we first apply the denoising methods {(\eqref{DPS} $(a)$, $(b)$, $(d)$ and $(e)$)} for {a} synthetic test image. We selected a deliberate test image comprising piecewise linear regions and sections with structured edges. The reference and noisy images are depicted in Figure~\ref{test1}.  Details and quality metrics (PSNR and structural similarity index measure (SSIM) \cite{ssim}) of the restored images are displayed in Figure \ref{check}.}
	\begin{figure} \centering \fontsize{8}{9.5}\selectfont
		\begin{tabular}{p{0.325\columnwidth}@{\ }p{0.325\columnwidth}@{\ }p{0.325\columnwidth}}
			\includegraphics[height=2.65cm]{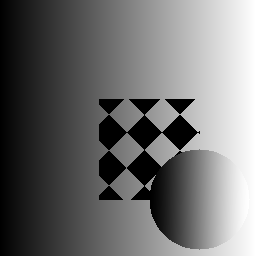}
			&
			\includegraphics[height=2.65cm]{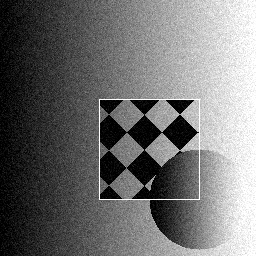}
			&
			\includegraphics[height=2.65cm]{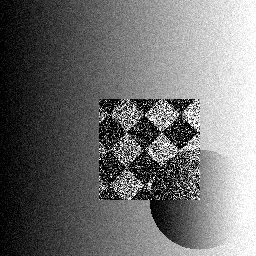} \\
			{($a$) Synthetic test image} & ($b$) a noisy image with a 
			& {($c$) the noisy image }\\
			& selected part highlighted & applied for denoising \\
                  & in the white box
		\end{tabular}
		\caption{Synthetic test image ($a$)  and its artificially generated noisy versions using Gaussian noise ($c$). The noisy image is generated in two stages: first, noise with a standard deviation of 0.05 is added to the reference image, followed by the addition of noise with a standard deviation of 0.4 to a specific region of the generated noisy image, highlighted by a white box $(b)$.}\label{test1}
	\end{figure}
	The restored images from Condat-TV and the proposed TGV method outperform other models in restoring edges (see the third column of Figure \ref{check}), whereas the proposed model demonstrates superiority in attenuating artifacts and removing noise. In the results of the first-order models (classic TV and Condat-TV), a significant amount of staircase artifacts can be observed in the partially linear areas. This experiment confirms that the proposed model preserves the edge restoration and detail retention properties of Condat-TV while also incorporating the inherent staircase effect mitigation of TGV. Additionally, it exhibits enhanced reliability in noise removal (see Figure \ref{check}).
	\begin{figure} \fontsize{8}{9.5}\selectfont
		\begin{tabular}{p{0.325\columnwidth}@{\ }p{0.325\columnwidth}@{\ }p{0.325\columnwidth}}
			\includegraphics[height=2.65cm]{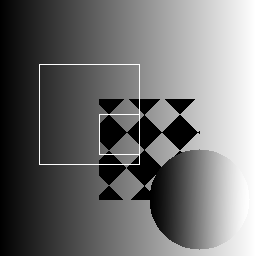}
			& \includegraphics[height=2.65cm]{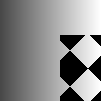}
			& \includegraphics[height=2.65cm]{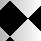}  \\
			\multicolumn{3}{c}{$(a)$ reference image and details}\\[0.25em]
			\includegraphics[height=2.65cm]{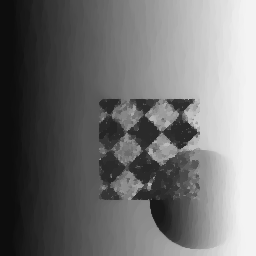}
			& \includegraphics[height=2.65cm]{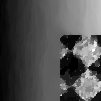}
			& \includegraphics[height=2.65cm]{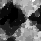} \\
			\multicolumn{3}{c}{$(b)$ TV restored and details}\\[0.25em]
			\includegraphics[height=2.65cm]{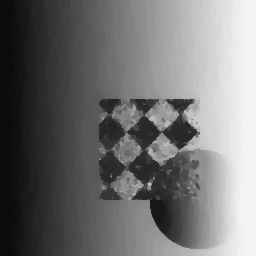}
			& \includegraphics[height=2.65cm]{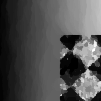}
			& \includegraphics[height=2.65cm]{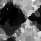} \\
			\multicolumn{3}{c}{$(c)$ Condat-TV restored and details}\\[0.25em]
			\includegraphics[height=2.65cm]{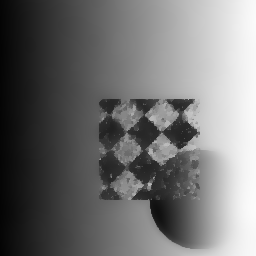}
			& \includegraphics[height=2.65cm]{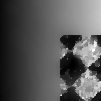}
			& \includegraphics[height=2.65cm]{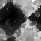} \\
			\multicolumn{3}{c}{$(d)$ TGV restored and details}\\[0.25em]
			\includegraphics[height=2.65cm]{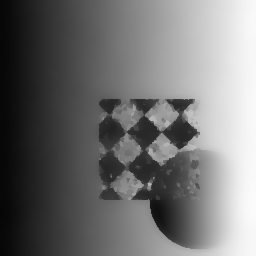}
			& \includegraphics[height=2.65cm]{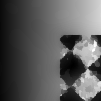}
			& \includegraphics[height=2.65cm]{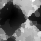} \\
			\multicolumn{3}{c}{$(e)$ New TGV restored and details}\\
		\end{tabular}
		\caption{Comparison of variational denoising results for the synthetic test image (Figure~\ref{test1}($a$)) with the best PSNR criterion. The parameters and quality metrics are as follows: ($b$) $\lambda^*=0.2050$, PSNR=24.22, SSIM=0.8626, ($c$) $\lambda^*=0.190$, PSNR=24.38, SSIM=0.8641, ($d$) $\alpha_1^*=0.205$, PSNR=24.34, SSIM=0.8838, ($e$) $\alpha_1^*=0.215$, PSNR=24.48, SSIM=0.8911. Results in the second column show that due to the piecewise linear nature of a part of the test image, staircase artifacts are evident for the first-order models; TV and Condat TV. The proposed TGV and the Condat-TV  are slightly more successful in restoring edges (for instance, look at the lower left and the upper left parts of the restored images in the third column). Overall, the proposed model is superior in attenuating artifacts, noise removal, and preserving edges.}\label{check}
	\end{figure}
	Moreover, we compare these variational denoising models for natural images (see the reference image  ``Girl'', in Figure \ref{fig000}), with zero-mean additive Gaussian noise and standard deviation of $0.1$. These images contain partially smooth areas as well as textures, edges, and fine details. Based on the above discussion, we expect that the proposed model outperforms the competing methods in terms of accuracy and artifact reduction.
	{As classic TV does not compete with other state-of-the-art variational models (at least in reducing artifacts), we illustrate the visual results for models in (\ref{DPS}), excluding classic TV. It is worth mentioning that the nature of Shannon TGV is different from the others because it involves interpolation in a domain four times the size of the grid domain of the given image, leading to significant computational complexity. In contrast, all other models are based on finite-difference operators with ranges of the same size as the given image. We include the results of the Shannon TGV model in our visual examples to demonstrate the effect of interpolation in a higher-dimensional grid domain. In our opinion, applying Shannon interpolation to the current approach could further reduce the fine edges of staircase artifacts and lead to improvements.
          
          In comparison with Condat's TV, and classic TGV}, the proposed model can restore images with better accuracy (PSNR and SSIM values) whereas staircase artifacts are more attenuated (see Figures  {\ref{fig1}, and \ref{fig2.5}}).  That is, the new proposed TGV is more accurate in comparison with Condat's TV and preserves the artifact-reducing property of the discrete classic second-order TGV (see again Figures {\ref{fig1}, and \ref{fig2.5}}). The resulting images are cleaner from noise and the PSNR and SSIM values are the highest in comparison with the other variational models.  To observe more details of the reconstructed images, parts of the obtained images are also shown in a zoomed version in the figures. To better judge the real superiority of the approach, we evaluated the performance of four variational models (TV, TGV, Condat-TV, and our proposed model) on the first 25 test images from the Berkeley Segmentation Dataset (BSDS). Table \ref{BLL} shows the average values of PSNR and SSIM for each model, where the parameters are tuned for both the best PSNR and SSIM criteria separately. The proposed method achieves the best PSNR and SSIM metrics (highlighted in bold).
        
	\begin{figure} \centering \fontsize{8}{9.5}\selectfont
 		\begin{tabular}{c@{\ \ }c}
			\includegraphics[height=4cm]{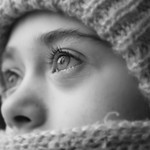} &
			\includegraphics[height=4cm]{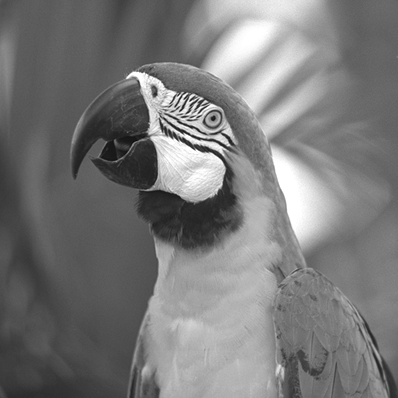}
			\\
			$(a)$ Girl                  &            $(b)$ Parrot  
		\end{tabular}
		\caption{{Reference images: images in ``png'' format with the following sizes: $(a)$ Girl ($150\times 150$ pixels), $(b)$ Parrot ($398\times 398$ pixels).}}\label{fig000}
	\end{figure}
	\begin{figure} \centering \fontsize{8}{9.5}\selectfont
          \begin{tabular}{c@{\ \ }c@{\ \ }c}
            \includegraphics[width=0.305\linewidth]{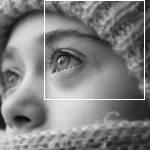} &
            \includegraphics[width=0.305\linewidth]{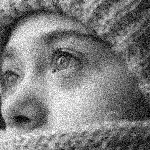} &
            \includegraphics[width=0.305\linewidth]{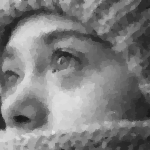} \\
            $(a)$ reference image & $(b)$ noisy image & $(c)$ Condat-TV-restored \\[0.25em]
            \includegraphics[width=0.305\linewidth]{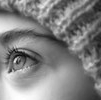} &
            \includegraphics[width=0.305\linewidth]{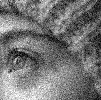} &
            \includegraphics[width=0.305\linewidth]{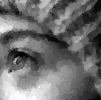} \\
            \multicolumn{1}{p{0.28\linewidth}}{\centering $(d)$ reference image: 
            details} &
            \multicolumn{1}{p{0.28\linewidth}}{\centering $(e)$ noisy image: details} &
             \multicolumn{1}{p{0.28\linewidth}}{\centering $(f)$ Condat-TV-restored: details}
          \end{tabular}          
		\caption{{Denoising experiment for the test image ``Girl'' with the best PSNR criterion: reference image, noisy image and restored image using Condat's discrete TV model are shown in $(a)$, $(b)$ and $(c)$, respectively. Some details of the reference and noisy image are shown in $(d)$ and $(e)$. The details of the restored image using Condat's TV model are shown in $(f)$, where strong staircase artifacts around the interfaces are evident. The parameters and quality metrics for $(c)$ are as follows: $\lambda^*=0.06$, PSNR=29.04, SSIM=0.8441.}}\label{fig1}
	\end{figure}
	\begin{figure} \fontsize{8}{9.5}\selectfont
          \begin{tabular}{c@{\ \ }c@{\ \ }c}
            \includegraphics[width=0.305\linewidth]{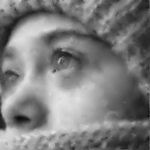} &
            \includegraphics[width=0.305\linewidth]{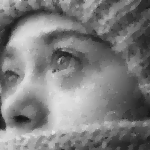} &
            \includegraphics[width=0.305\linewidth]{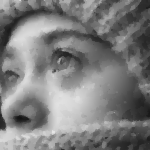} \\
            \multicolumn{1}{p{0.28\linewidth}}{\centering$(a)$ Shannon TGV-restored} & $(b)$ TGV-restored & $(c)$ new-TGV-restored \\[0.25em]
            \includegraphics[width=0.305\linewidth]{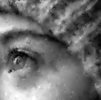} &
            \includegraphics[width=0.305\linewidth]{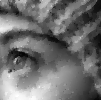} &
            \includegraphics[width=0.305\linewidth]{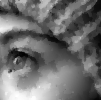} \\
            \multicolumn{1}{p{0.28\linewidth}}{\centering $(d)$ Shannon TGV-restored: details} &
            \multicolumn{1}{p{0.28\linewidth}}{\centering $(e)$ TGV-restored: details} &
             \multicolumn{1}{p{0.28\linewidth}}{\centering $(f)$ new-TGV-restored: details}
          \end{tabular}          
		\caption{{Denoising experiment for the test image ``Girl'' with the best PSNR criterion: the restored images using the Shannon TGV model, the second-order classic discrete TGV model, and the proposed model are shown in $(a)$, $(b)$, and $(c)$, respectively. Some details are shown in $(d)$, $(e)$, and $(f)$. The images obtained by means of the TGV models are far better than the restored image using Condat's TV model in terms of reducing staircase artifacts. In the result for classic discrete TGV, some unwanted narrow edges and speckles are visible in the smooth parts, whereas, in the proposed model, these kinds of effects have decreased significantly. On the other hand, the amount of noise in the image has been reduced using the proposed model. As we expect, because of using interpolation in a higher-dimensional grid domain in the Shannon TGV model, the very fine artifacts are attenuated. The parameters and quality metrics are as follows: $(a)$ $\alpha_1^*=0.05$, PSNR=29.67, SSIM=0.8697; $(b)$ $\alpha_1^*=0.06$, PSNR=29.16, SSIM=0.8529; $(c)$ $\alpha_1^*=0.06$, PSNR=29.45, SSIM=0.8657.}}\label{fig2.5}
	\end{figure}
	\begin{table}
		\centering
		\caption{{Denoising with both the PSNR and SSIM criteria: average PSNR and SSIM for each variational model, for the first 25 test images from BSDS.}}
		\label{tab:results}
		\begin{tabular}{lcc}
			\toprule
			Model   & Average PSNR & Average SSIM \\ \midrule
			TV                & 27.5052                 & 0.7726               \\ 
			Condat-TV              & 27.6486                & 0.7777                \\ 
			TGV         & 27.5332                & 0.7749               \\ 
			Proposed          & \textbf{27.6738}                 & \textbf{0.7798}                 \\ \botrule
		\end{tabular}\label{BLL}
              \end{table}
              
	\subsection{Upscaling}
	{In this section, we discuss the upscaling performance of our proposed model, focusing on its ability to enhance image resolution while preserving details and textures. We compare our method with the variational models in \eqref{DPS}, highlighting its superior performance in terms of texture reproduction and edge preservation. In the visual illustrations, results of classic TV are excluded. \\
		Upscaling involves increasing the resolution of an image $f$ by a factor of $k \in \mathbb{N}$ in both directions. It is considered the inverse problem of downscaling. The downscaling operator ``${Dow}$'' maps an image to the image of its averages over $k \times k$ blocks, and we suppose that $f = {Dow}(u)$ for some reference image $u$ that we want to estimate. In the general inverse problem \eqref{DNP}, assume $\mathcal{B} = {Dow}$. In our experiments, we set $k = 2,$
		and $f$ is a downscaled noisy version of the reference image $u$:  $f={Dow}(\hat{u}), \hat{u}=u+\epsilon,$ where $\epsilon$ is additive Gaussian noise with 0 mean and a standard deviation of 0.001.
		\\
		We present the results of upscaling the ``parrot'' test image (Figure \ref{fig000}) using our model and the compared variational models  (see Figures \ref{fig:ups1}, and \ref{fig:ups2}). Our method consistently outperforms the others, particularly in reproducing intricate textures and preserving edges, such as the textures in the parrot's cheek patches.
		One of the key strengths of our upscaling model is its ability to capture fine details with high fidelity.\\
		To further validate the performance of our upscaling model, we conducted a comprehensive analysis using the first 25 test images from the BSDS dataset for TV, TGV, Condat-TV,
		and our proposed model. Table \ref{BLL2} shows the average PSNR and SSIM values for these images, where the parameters are tuned for both the best PSNR and SSIM criteria separately. The proposed method achieves the best SSIM and PSNR values (highlighted in bold). This indicates the superior visual quality and detail preservation of the upscaled images produced by our model.
	}  
	\begin{figure} \centering \fontsize{8}{9.5}\selectfont
          \begin{tabular}{c@{\ \ }c@{\ \ }c}
            \includegraphics[width=0.305\linewidth]{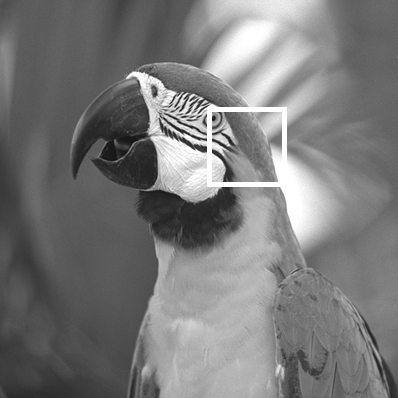} &
            \includegraphics[width=0.305\linewidth]{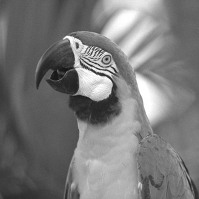} &
            \includegraphics[width=0.305\linewidth]{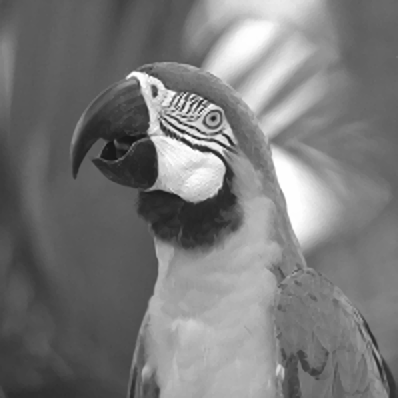} \\
            $(a)$ reference image & $(b)$ downscaled image & $(c)$ Condat-TV-restored \\[0.25em]
            \includegraphics[width=0.305\linewidth]{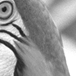} &
            \includegraphics[width=0.305\linewidth]{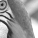} &
            \includegraphics[width=0.305\linewidth]{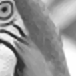} \\
            \multicolumn{1}{p{0.28\linewidth}}{\centering $(d)$ reference image: 
            details} &
            \multicolumn{1}{p{0.28\linewidth}}{\centering $(e)$ downscaled image: details} &
             \multicolumn{1}{p{0.28\linewidth}}{\centering $(f)$ Condat-TV-restored: details}
          \end{tabular}          
		\caption{{Upscaling experiment for the test image ``Parrot'' with the best PSNR criterion: the reference image, downscaled image, and restored image using Condat's discrete TV model are shown in $(a)$, $(b)$, and $(c)$, respectively. Some details of the reference and downscaled images are shown in $(d)$ and $(e)$. The details of the restored image using Condat's TV model are shown in $(f)$, where staircase artifacts in the smooth upper right region of the image are evident. The parameters and quality metrics for $(c)$ are as follows: $\lambda^*=0.0011$, PSNR=34.39, SSIM=0.9498. }}\label{fig:ups1}
	\end{figure}
	\begin{table}
		\centering
		\caption{{Upscaling with both the PSNR and SSIM criteria: average PSNR and SSIM for each variational model, for the first 25 test images from BSDS.}}
		\label{tab:results_upscaling}
		\begin{tabular}{lcc}
			\toprule
			Model   & Average PSNR & Average SSIM \\ \midrule
			TV                & 28.5268                 & 0.8658               \\ 
			Condat-TV              & 29.0356               & 0.8788               \\ 
			TGV         & 28.8466              & {0.8764}                \\ 
			Proposed          & \textbf{29.0443}                 & \textbf{0.8789}                 \\ \botrule
		\end{tabular}\label{BLL2}
	\end{table}
	\begin{figure} \centering \fontsize{8}{9.5}\selectfont
          \begin{tabular}{c@{\ \ }c@{\ \ }c}
            \includegraphics[width=0.305\linewidth]{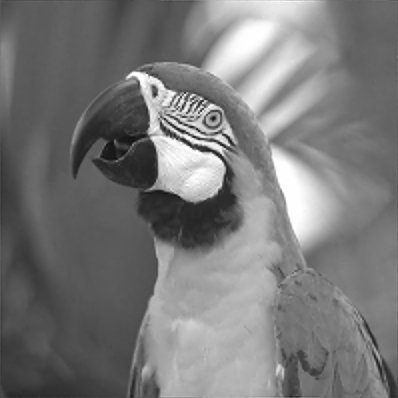} &
            \includegraphics[width=0.305\linewidth]{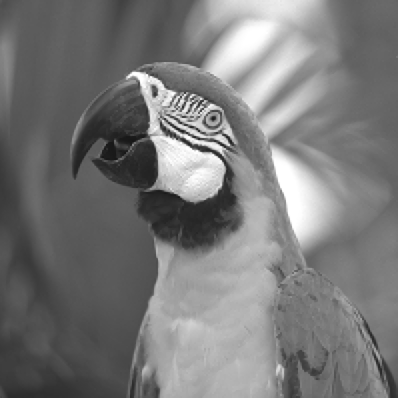} &
            \includegraphics[width=0.305\linewidth]{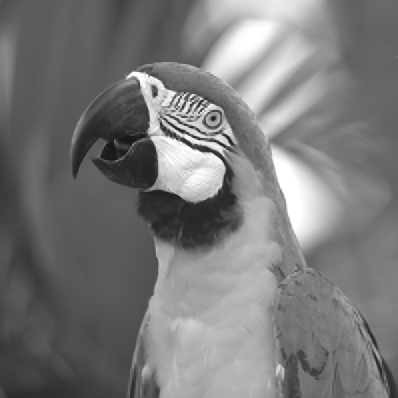} \\
            \multicolumn{1}{p{0.28\linewidth}}{\centering $(a)$ Shannon TGV-restored} & $(b)$ TGV-restored & $(c)$ new-TGV-restored \\[0.25em]
            \includegraphics[width=0.305\linewidth]{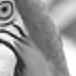} &
            \includegraphics[width=0.305\linewidth]{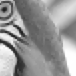} &
            \includegraphics[width=0.305\linewidth]{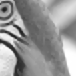} \\
 \multicolumn{1}{p{0.28\linewidth}}{\centering $(d)$ Shannon TGV-restored: details} &
            \multicolumn{1}{p{0.28\linewidth}}{\centering $(e)$ TGV-restored: details} &
             \multicolumn{1}{p{0.28\linewidth}}{\centering $(f)$ new-TGV-restored: details}
          \end{tabular}                    
		\caption{{Upscaling experiment for the test image ``Parrot'' with the best PSNR criterion: the restored images using the Shannon TGV 
				model, the second-order classic discrete TGV model, and the proposed model are shown in $(a)$, $(b)$, and $(c)$, respectively. Some details are shown in $(d)$, $(e)$, and $(f)$. The images obtained by means of the classic discrete TGV models are superior    
				to the restored image using Condat's TV model in terms of reducing staircase artifacts. The proposed model outperforms in    
				reproducing intricate  textures and preserving edges, such as the textures in the cheek patches. Despite the  
				expectation of obtaining better results for the Shannon TGV model due to its interpolation in a higher-dimensional grid domain, we cannot observe better visual results in the Shannon TGV restored image. The parameters and quality metrics are as follows: $(a)$ $\alpha_1^* = 0.0002$, PSNR = 34.37, SSIM = 0.9520; $(b)$ $\alpha_1^* = 0.0002$, PSNR = 33.84, SSIM = 0.9532; $(c)$ $\alpha_1^* = 0.0008$, PSNR = 34.42, SSIM = 0.9506.}}\label{fig:ups2}
	\end{figure}		
	\subsection{Effect of the Invariance Property in Denoising}
	The invariance property of the proposed model can clearly affect the results of inverse problems in imaging. In this section, we compare the denoising results of both the classic second order discrete TGV and the proposed model with the corresponding results obtained from the $90^\circ$ rotated images. Specifically, let $u$ be a given image and $f$ be the related noisy image. Additionally, let $\mathcal{R}u$ and $\mathcal{R}f$ denote the $90^\circ$ rotated versions of $u$ and $f$, respectively. We aim to solve denoising problems to restore $u$ and $\mathcal{R}u$ from $f$ and $\mathcal{R}f$, respectively. If $u^*$ is a solution for the original problem and $v^*$ is the solution for the rotated version, according to Theorem \ref{isot}, we expect that in the proposed model $v^*=\mathcal{R}u^*$, whereas this is not the case for the classic TGV. To demonstrate this fact empirically, consider $u$ and $f$ as the test image ``Girl'' and its noisy version, respectively, shown in Figure \ref{fig1}.\\
		We conducted a denoising experiment on the ``Girl'' test image using both the classic discrete TGV and our proposed model. In Figure \ref{fig:ROT1}, we compared the restored images for the original problem and the $-90^\circ$ rotated version of the restored image for $90^\circ$ rotated data using TGV. In the first row these two images are shown. We highlighted two small parts of both restored images in two boxes. In the second row, we show the intensities of the bigger box, and in the third row, we show the intensity values of the smaller box. The differences between the results in the second and third row are evident, indicating that the results for the original data and the rotated one are different.\\
		In Figure \ref{fig:ROT2}, we performed the same analysis for our proposed model. Interestingly, everything for both restored results (original data and rotated ones) was completely consistent, demonstrating the rotational invariance property of our model.\\
		Furthermore, we reported the results in Table \ref{table:ROT1} for the TGV model, including PSNR and SSIM values for the original problem and the rotated one, as well as primal and dual values of the primal-dual algorithm for the corresponding optimization problem for both of them. Additionally, we reported the 2-norm of the difference between the obtained results for the original problem and the $-90^\circ$ rotated version of the problem with $90^\circ$ rotated data. We included a comparative table for our model (Table \ref{table:ROT2}). The differences of all these values for the original and rotated problems were compared. For our model, all differences were of the order of at most $10^{-14}$, indicating perfect rotational invariance. In contrast, for the TGV model, the differences were at least of the order of $10^{-4}$, highlighting the superior rotational invariance property of our model.
              \begin{table*} \centering \tablebodyfont
                \caption{{Comparison of denoising results for the test image ``Girl'' and its $90^\circ$ rotated version using the classic discrete TGV: the differences in the quantities for both problems are at least of the order of $10^{-4}$, indicating that this model is not $90^\circ$ rotationally invariant}} %
		\begin{tabular}{lrrrrr}
			\toprule
			\multicolumn{1}{l}{Model/TGV} & \multicolumn{1}{c}{PSNR} & \multicolumn{1}{c}{SSIM}  & \multicolumn{1}{c}{Primal value} & \multicolumn{1}{c}{Dual value}                          \\%
			\midrule
			Original data                                & 29.1634                    & 0.8529                & 105.8004                            & 105.9118                                             \\
			Rotated data                                & 29.1498                    & 0.8525                 & 106.2450                      & 106.3516                                       \\
			\midrule
			Difference                                & 0.0136                    & $3.63\times 10^{-4}$                 & 0.4446                      & 0.4399 \\
			\midrule
			\rlap{$\|u^*-\mathcal{R}^{-1}v^*\|=0.2089$}                                &                    &                  &                       & \\				
			\botrule
		\end{tabular}
		\label{table:ROT1}
	\end{table*}
	\begin{table*}
          \centering \tablebodyfont
          \caption{{Comparison of denoising results for the test image ``Girl'' and its $90^\circ$ rotated version using the proposed model: the differences in the quantities for both problems are at most of the order of $10^{-14}$, indicating that this model is $90^\circ$ rotationally invariant.}} %
		\begin{tabular}{lrrrrr}
			\toprule
			\multicolumn{1}{l}{Model/Proposed} & \multicolumn{1}{c}{PSNR} & \multicolumn{1}{c}{SSIM}  & \multicolumn{1}{c}{Primal value} & \multicolumn{1}{c}{Dual value}                          \\%
			\midrule
			Original data                                & 29.4524                   & 0.8657                & 107.7793                            & 107.9145                                            \\
			Rotated data                                & 29.4524                   & 0.8657                 & 107.7793                      & 107.9145                                      \\
			\midrule
			Difference                                & $3.20\times 10^{-14}$                   & $1.11\times 10^{-15}$                 &  $2.84\times 10^{-14}$                     & $2.84\times 10^{-14}$\\
			\midrule
			\rlap{$\|u^*-\mathcal{R}^{-1}v^*\|=1.25\times 10^{-15}$}                             &                    &                  &                       & \\
			\bottomrule
		\end{tabular}
		\label{table:ROT2}
	\end{table*}
	\begin{figure} \centering \fontsize{8}{9.5}\selectfont
		\begin{tabular}{c@{\ }c}
			\includegraphics[height=4cm]{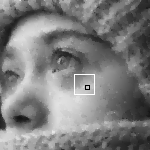} &
			\includegraphics[height=4cm]{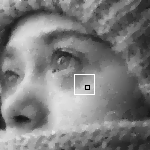} 
			\\
			$(a)$  $u^*$         & $(b)$ $\mathcal{R}^{-1}v^*$       \\[0.5em]
			\includegraphics[height=4cm]{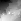}   &
			\includegraphics[height=4cm]{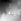}     
			\\
			$(c)$ details of the white box for $u^*$      & $(d)$ details of the white box for $\mathcal{R}^{-1}v^*$      \\[0.75em]
			\includegraphics[height=2cm]{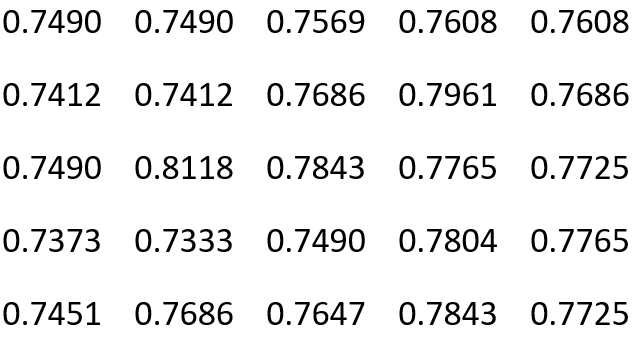}   &
			\includegraphics[height=2cm]{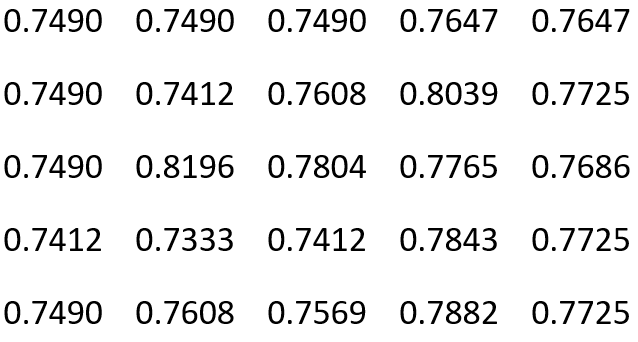}     
			\\[0.5em]
			$(e)$ details of the black box for $u^*$                & $(f)$ details of the black box for $\mathcal{R}^{-1}v^*$
		\end{tabular}
		\caption{Comparison of denoising results for the test image ``Girl'' and its $90^\circ$ rotated version using the classic discrete TGV: $(a)$ $u^*$ is the restored image for the original problem, $(b)$ $v^*$ is the restored image for the $90^\circ$ rotated data, and $\mathcal{R}^{-1}v^*$ is its $-90^\circ$ rotated version. Intensities of the highlighted white box for the result of the original problem and the rotated data are shown in $(c)$ and $(d)$, respectively. Intensity values in $[0,1]$ of the highlighted black box for the result of the original problem and the rotated data are shown in $(e)$ and $(f)$, respectively. The images in the second row and the intensity values in the third row show that the two restored images are significantly different in most parts.}\label{fig:ROT1}\end{figure}
	\begin{figure} \centering \fontsize{8}{9.5}\selectfont
		\begin{tabular}{c@{\ }c}
			\includegraphics[height=4.2cm]{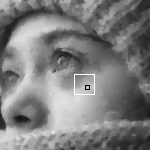} &
			\includegraphics[height=4.2cm]{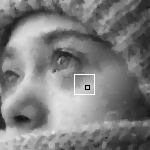} 
			\\
			$(a)$  $u^*$         & $(b)$ $\mathcal{R}^{-1}v^*$       \\[0.5em]
			\includegraphics[height=4.2cm]{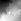}   &
			\includegraphics[height=4.2cm]{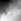}     
			\\
			$(c)$ details of the white box for $u^*$      & $(d)$ details of the white box for $\mathcal{R}^{-1}v^*$      \\[0.75em]
			\includegraphics[height=2cm]{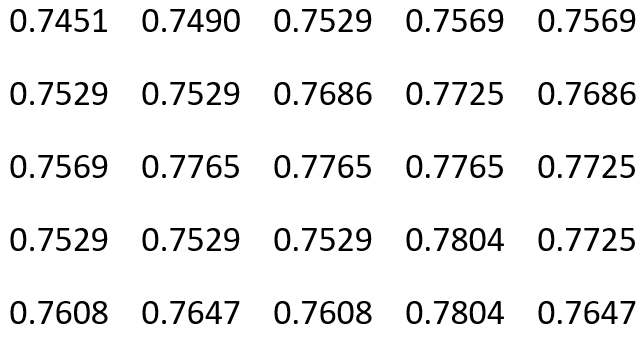}   &
			\includegraphics[height=2cm]{NewTGV-5.png}     
			\\[0.5em]
			$(e)$ details of the black box for $u^*$                & $(f)$ details of the black box for $\mathcal{R}^{-1}v^*$
		\end{tabular}
		\caption{Comparison of denoising results for the test image ``Girl'' and its $90^\circ$ rotated version using the proposed discrete TGV: $(a)$ $u^*$ is the restored image for the original problem, $(b)$ $v^*$ is the restored image for the $90^\circ$ rotated data, and $\mathcal{R}^{-1}v^*$ is its $-90^\circ$ rotated version. Intensities of the highlighted white box for the result of the original problem and the rotated data are shown in $(c)$ and $(d)$, respectively. Intensity values in $[0,1]$ of the highlighted black box for the result of the original problem and the rotated data are shown in $(e)$ and $(f)$, respectively. The images in the second row and the intensity values in the third row show that the two restored images are completely the same, indicating that the proposed model is $90^\circ$ rotationally invariant.}\label{fig:ROT2}
	\end{figure}
	\subsection{Computation of TGV; Classic TGV vs. Proposed TGV}
	In this section the rotational invariance property of the classic discrete TGV and the proposed discrete TGV are compared. For three test images (see Figure \ref{iso}), the classic discrete TGV values and the proposed discrete TGV values of the images as well as their $90^\circ$ rotated versions are calculated.		
	To compute an approximation of the classic discrete TGV and the proposed one, we solve the optimization problems (\ref{sim2}) and (\ref{sim3}), respectively, by means of the primal-dual algorithm (Algorithm \ref{algorithm1}).
	In order to solve (\ref{sim3}),
	let $\tilde{L}^*=\left(\begin{array}{cccccc}
		L_\bullet^* & 0             & 0                     & 0                  & -\mathcal{E}^{new} \\
		0           & L_{\bullet}^* & L_{\leftrightarrow}^* & L_{\updownarrow}^* & I
	\end{array}\right)$.
	Then, we can rewrite the constraint of (\ref{sim3}) as
	$$
	\tilde{L}^*(
	v_\bullet,
	{w}_{\bullet},
	{w}_{\leftrightarrow},
	{w}_{\updownarrow},
	\omega
	)=(0,\mathcal{D}^{new}u).
	$$
	Now, set
        \begin{align*}
          z&= (v_\bullet, w_{\bullet}, w_{\leftrightarrow}, w_{\updownarrow}, \omega), \\
          \mathcal{G}(z)&=\alpha_0\| v_\bullet\|_1+ \alpha_1\sum_{\star=\bullet,\leftrightarrow,\updownarrow}{\|{w}_{\star}\|_1}, \\ y&=\left(\begin{array}{c}
            v \\
            w
          \end{array}\right), \quad \mathcal{F}(y)=I_{\{(0, \mathcal{D}^{new}u)\}}(y), \quad A = \tilde L^*.
        \end{align*}
	Then, problem~\eqref{sim3} can be written in terms of~\eqref{conalg}. In order to employ Algorithm~\ref{algorithm1}, note that the proximal operators for $\sigma \mathcal{F}^*$ and $\tau \mathcal{G}$ read as:
        \begin{align*}
          \text{prox}_{\sigma \mathcal{F}^*}(v,w) &=(v,w-\sigma\mathcal{D}^{new}u), \\ \text{prox}_{\tau \mathcal{G}}(v_\bullet, w_\bullet,w_\leftrightarrow,w_\updownarrow,\omega)&=(\bar{v}_\bullet,\bar{w}_\bullet,\bar{w}_\updownarrow, \bar{w}_\leftrightarrow,\bar{\omega}),
        \end{align*}
	using the notation of~\eqref{prox}.
	As a result, the primal-dual algorithm to solve (\ref{sim3}) is outlined in Algorithm \ref{algorithm3}.
	\begin{algorithm}
          		\caption{An algorithm for solving
			problem (\ref{sim3}) and its dual form (\ref{nrt}).} \label{algorithm3}
                      \begin{algorithmic}
\Require Image $u \in \mathcal{U}_\bullet$,
                          $\alpha=(\alpha_0,\alpha_1), \alpha_0 > 0, \alpha_1 > 0$
                          \Ensure For the iteration number $N$, $z^N=(v_\bullet^N, w_\bullet^N, w_\leftrightarrow^N, w_\updownarrow^N, \omega^N)$ is an approximation of the solution of the primal problem (\ref{sim3}). Moreover, $y^N=(v^N, w^N)$ is an approximation of a solution of the dual problem (\ref{nrt}) and the approximated new discrete TGV value is $\alpha_0\| v_\bullet\|_1+ \alpha_1\sum_{\star=\bullet,\leftrightarrow,\updownarrow}{\|{w}_{\star}\|_1}$
                          \Initialization  Choose $\sigma > 0$ and $\tau > 0$ such that $\sigma\tau \|\tilde L^*\|^2 < 1$, $\tilde{L}^*=\left(\begin{array}{cccccc}
		L_\bullet^* & 0             & 0                     & 0                  & -\mathcal{E}^{new} \\
		0           & L_{\bullet}^* & L_{\leftrightarrow}^* & L_{\updownarrow}^* & I
	\end{array}\right)$ 
                        \State Choose an arbitrary initial approximation of a primal solution $v_\bullet^0 = \tilde{v}_\bullet^0 \in \mathcal{U}_\bullet \times \mathcal{U}_\bullet \times \mathcal{U}_\bullet$, $w_\star^0= \tilde{w}_{\star}^0 \in \mathcal{U}_{\star}\times \mathcal{U}_{\star}$, $\star = \bullet, {\leftrightarrow,} \updownarrow$, $\omega^0 = \tilde{\omega}^0 \in\mathcal{U}_{\leftrightarrow}\times \mathcal{U}_{\updownarrow}$
                        \State Choose an arbitrary initial approximation of a dual solution $v^0\in\bar{\mathcal{U}}_{\bullet}^x\times \bar{\mathcal{U}}_{\bullet}^y\times \mathcal{U}_\times$, $w^0\in \mathcal{U}_{\leftrightarrow}\times \mathcal{U}_{\updownarrow}$ 
                        \While{convergence criterion not met, for\ $k=0,1,\ldots$}
                          \State $v^{k+1}=v^k+\sigma(L^*_\bullet\tilde v_\bullet^k-\mathcal{E}^{new}\tilde\omega^k)$
                            \State $w^{k+1}=w^k+\sigma(L^*_\bullet \tilde w_\bullet^k + L^*_\leftrightarrow \tilde w_\leftrightarrow^k + L^*_\updownarrow \tilde w_\updownarrow^k - \mathcal{D}^{new}u +\tilde\omega^k)$
                            \ForAll{$\star = \bullet, \leftrightarrow, \updownarrow$} \State $v_\bullet^{k+1} = \text{shrink}_{\alpha_0\tau}(v_\bullet^k - \tau L_\bullet v^{k+1})$ \State $w_\star^{k+1} = \text{shrink}_{\alpha_1\tau}(w_\star^k - \tau L_\star w^{k+1})$ \EndFor
                            \State
                            $\omega^{k+1} = \omega^k - \tau(\text{Div}^{new} v^{k+1} + w^{k+1})$
                            \State $\tilde{v}_\bullet^{k+1}=2v_\bullet^{k+1}-v_\bullet^k$ %
                            \ForAll{$\star=\bullet, \leftrightarrow, \updownarrow$} \State $\tilde{w}_\star^{k+1}=2w^{k+1}_\star- w_\star^k$ \EndFor
                            \State
                            $\tilde\omega^{k+1} = 2\omega^{k+1} - \omega^k$
                          \EndWhile
                        \end{algorithmic}
                      \end{algorithm}
	We employed Algorithm \ref{algorithm3} with 1000 iterations in our simulations. Table \ref{table:2} confirms that the proposed TGV has invariant values, up to numerical precision, for the original images and their 90-degree rotated versions.
	\begin{figure} \fontsize{8}{9.5}\selectfont
               \fontsize{8}{9.5}\selectfont
		\begin{tabular}{c@{\ \ }c@{\ \ }c}
			\includegraphics[height=2.95cm]{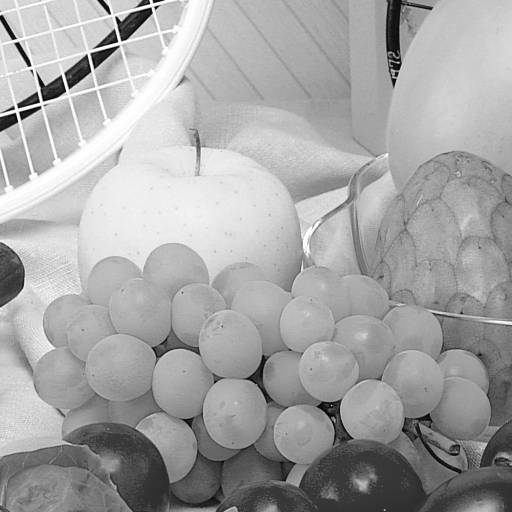} &
			\includegraphics[height=2.95cm]{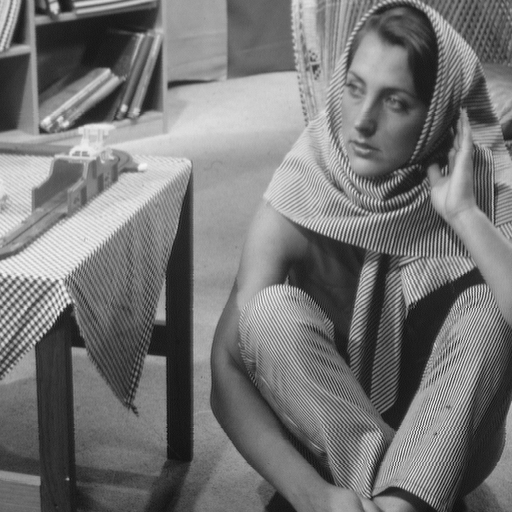}    &
			\includegraphics[height=2.95cm]{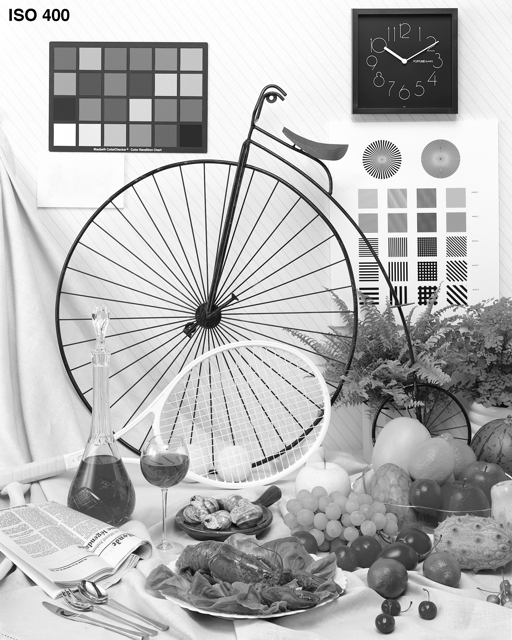}
			\\
			 $(a)$ Fruits                                 & $(b)$ Barbara & $(c)$ Bike
		\end{tabular}
		\caption{The test images used for the 90$^\circ$ isotropy testing problem. From left to right: Fruits, Barbara and Bike test image.}\label{iso}
              \end{figure}
              
	\begin{table*} \centering \tablebodyfont
          		\caption{The values of the classic discrete TGV and the proposed discrete TGV for three test images and their $90^\circ$ rotated versions {for the parameters $(\alpha_0,\alpha_1) = (0.14, 0.07)$}. Here, ``error'' is the absolute difference of the classic discrete TGV (proposed discrete TGV) value of the rotated image and the reference image.} %
		\label{table:2}
		\begin{tabular}{lrrrrrrr}
			\toprule
			Image                              &                               & \multicolumn{2}{c}{Fruits} & \multicolumn{2}{c}{Barbara}   & \multicolumn{2}{c}{Bike}                                                                                        \\
			\midrule
			\multicolumn{2}{l}{Model/Rotation} & \multicolumn{1}{c}{TGV value} & \multicolumn{1}{c}{Error}  & \multicolumn{1}{c}{TGV value} & \multicolumn{1}{c}{Error} & \multicolumn{1}{c}{TGV value} & \multicolumn{1}{c}{Error}                           \\%
			\midrule
			TGV                                & $0^\circ$                     & {587.0513}                 & --                            & 1326.9521                 & --                            & 1720.4807                 & --                      \\
			TGV                                & $90^\circ$                    & {588.2140}                 & {1.1627}                      & 1329.7168                 & 2.7647                        & 1706.1456                 & 14.3351                 \\
			New TGV                            & $0^\circ$                     & {632.2688}                 & --                            & 1421.8078                 & --                            & 1806.2271                 & --                      \\
			New TGV                            & $90^\circ$                    & {632.2688}                 & {$4.55\times 10^{-13}$}       & 1421.8078                 & {$2.27\times 10^{-13}$}       & 1806.2271                 & {$1.14\times 10^{-12}$} \\
			\bottomrule
		\end{tabular}
	\end{table*}	
	\section{Conclusion}
	In this paper, the idea of Condat's discrete total variation is transferred to the second-order TGV. A new discrete second-order TGV model is designed based on the building blocks containing the definition of suitable grids, introducing new discrete derivative and divergence operators and proposing suitable linear conversion operators to guarantee some invariance properties. The proposed model is invariant with respect to $90^\circ$ rotations {and preserves the benefits of Condat's model in reducing noise for the areas containing textures, edges and
		details.} %
	Moreover, the new discrete TGV preserves the ability of the classic discrete TGV to diminish artifacts such as staircase artifacts, which are typical for discrete %
	TV models. The same design principles can be applied for higher-order TGV or in higher dimensions to gain better results for imaging problems. %
	While this can quite easily be done for specific cases, the development of a general framework requires some effort and can thus be regarded a subject of future work.

        \backmatter

        \bmhead{Acknowledgements}

	This work is based upon research funded by Iran National Science Foundation (INSF) under project No. 4032410, and it was partially supported by IMU-CDC. {The Department of Mathematics and Scientific Computing, to which KB is affiliated, is a member of NAWI Graz (\url{https://www.nawigraz.at/en/}).}
        
        \bibliography{paper}

	\section*{List of Symbols}\nopagebreak
        \newsavebox\WBox
         \sbox\WBox{$C_c^{k}(\Omega,\mathbb{R}^d)$}
         \begin{supertabular}{
             @{} 
             p{\wd\WBox}
             p{\dimexpr\columnwidth-\wd\WBox-2\tabcolsep\relax}}
           \textbf{Section~\ref{sec:intro}} \\
         $L^1_{loc}(\Omega)$ & space of locally integrable functions on $\Omega$ \\
         $\mathcal{F}$ & fidelity term (Tikhonov regularization)\\
         $\nabla$ & continuous gradient \\
         TV &  total variation (continuous and discrete) \\
         TGV & total generalized variation (continuous and discrete) \\
         $\text{TGV}^k$ &  $k$-th order total generalized variation \\
         $\text{TV}_c$  & Condat's TV \\
         \textbf{Section~\ref{sec:tv}} \\
         $C_c^{k}(\Omega,\mathbb{R}^d)$ & $k$-times continuously differentiable compactly supported $\mathbb{R}^d$-valued functions  \\
         $\text{div}$ & continuous divergence operator ($\text{div}: C_c^{1}(\Omega,\mathbb{R}^d)\rightarrow C^0(\Omega)$) \\
         $C^k(\Omega)$ & $k$-times continuously differentiable functions on $\Omega$ \\
         $W^{1,1}(\Omega)$ & Sobolev space of functions whose weak derivatives up to the order one, belong to $L^1(\Omega)$ \\
         \textbf{Section~\ref{sec:tgv}} \\         
         $\text{Sym}^{2}(\mathbb{R}^d)$ & space of symmetric $2$-tensors on $\mathbb{R}^d$  \\
         \rlap{$C_c^{2}(\Omega, \text{Sym}^{2}(\mathbb{R}^d))$} & \qquad\ \  two times continuously differentiable compactly supported $2$-tensor fields \\
         $\text{Div}$ & continuous divergence operator ($\text{Div}: C_c^{2}(\Omega, \text{Sym}^{2}(\mathbb{R}^d)\rightarrow C_c^1(\Omega, \mathbb{R}^d)$) \\
         $\text{div}^2$ & continuous second-order divergence operator ($\text{div}^2: C_c^{2}(\Omega, \text{Sym}^{2}(\mathbb{R}^d)\rightarrow C^0(\Omega)$) \\
         $D_{x+}$  & forward difference operator with respect to $x$-direction \\
         $D_{y+}$  & forward difference operator with respect to $y$-direction \\
         $\mathcal{D}$ & discrete gradient operator \\
         \textbf{Section~\ref{sec:iso_tv}} \\         
         $\text{TV}_i$ & isotropic TV (ROF model) \\
         \textbf{Section~\ref{sec:condat_tv}} \\         
         $L_\bullet$  & domain conversion operator to the center of a pixel for Condat's model \\
         $L_\leftrightarrow$  & domain conversion operator to the center of the horizontal edge of a pixel for Condat's model \\
         $L_\updownarrow$ & domain conversion operator to the center of the vertical edge of a pixel for Condat's model \\
         \textbf{Section~\ref{sec25}} \\         
         $D_{x-}$ & backwards difference operator with respect to $x$-direction \\
         $D_{y-}$ & backwards difference operator with respect to $y$-direction \\
         $\text{TGV}_{\alpha}^2$ & discrete classic second-order TGV with regularization parameters $\alpha=(\alpha_0,\alpha_1)$ \\
         $S(\mathbb{R}^4)^{N_1\times N_2}$ & space of discrete symmetric second-order tensor fields in $(\mathbb{R}^4)^{N_1\times N_2}$ \\
         $\text{Div}$ & discrete divergence operator ($\text{Div}: S(\mathbb{R}^4)^{N_1\times N_2}\rightarrow (\mathbb{R}^2)^{N_1\times N_2}$) \\
         $\mathcal{E}$ & adjoint of $-\text{Div}$  \\
         $\text{div}^2$ & discrete second order divergence operator ($\text{div}^2 : S(\mathbb{R}^4)^{N_1\times N_2}\rightarrow \mathbb{R}^{N_1\times N_2}$) \\
         $\mathcal{D}^2$ & adjoint of $\text{div}^2$ \\
         \textbf{Section~\ref{sec:other_tgv}} \\
         $\text{TGV}^{2(\alpha)}_{\text{SH} (n)}$ & $n$ Shannon second-order TGV \\
         $\mathbb{R}^+$ & set of positive real numbers \\
         $\mathcal{D}\mathcal{G}_r(\Omega)$ & the discontinuous Lagrange finite element spaces of order $r$ on $\Omega$ \\
         $\mathcal{R}\mathcal{T}_0(\Omega)$ & the lowest-order Raviart--Thomas finite element space on $\Omega$ \\
         \textbf{Section~\ref{sec:grids}} \\
         $A_{\bullet}$ & grid set of pixel centers \\
         $A_{\leftrightarrow}$ & grid set of horizontal edge centers of pixels \\ 
         $A_{\updownarrow}$ & grid set of vertical edge centers of pixels \\
         $\bar{A}^x_{\bullet}$ & extended grid set of pixel centers in $x$-direction \\
         $\bar{A}^y_{\bullet}$  & extended grid set of pixel centers in $y$-direction \\
         $A_{\times}$ & grid set of pixel corners \\
         $\mathcal{U}_\bullet$ &  space of real functions with domain $A_{\bullet}$ \\
         $\mathcal{U}_{\leftrightarrow}$ & space of real functions with domain $A_{\leftrightarrow}$ \\
         $\mathcal{U}_{\updownarrow}$ &  space of real functions with domain $A_{\updownarrow}$ \\
         $\bar{\mathcal{U}}_\bullet^x$ & space of real functions with domain $\bar{A}^x_{\bullet}$ \\
         $\bar{\mathcal{U}}_\bullet^y$ & space of real functions with domain $\bar{A}^y_{\bullet}$ \\
         $\bar{\mathcal{U}}_\times$ & space of real functions with domain $A_{\times}$ \\
         \textbf{Section~\ref{sec:diff_op}} \\
         $\mathcal{D}^{new}$  & the proposed discrete first-order derivative operator ($\mathcal{D}^{new} : \mathcal{U}_\bullet\rightarrow \mathcal{U}_\leftrightarrow\times\mathcal{U}_\updownarrow$) \\
         $\mathcal{E}^{new}$ & the proposed discrete first-order symmetrized derivative operator ($\mathcal{E}^{new} : \mathcal{U}_\leftrightarrow \times\mathcal{U}_\updownarrow\rightarrow \bar{\mathcal{U}}_{\bullet}^x\times \bar{\mathcal{U}}_{\bullet}^y\times \mathcal{U}_\times$) \\
         $\mathcal{D}^{2new}$ & the proposed discrete second-order derivative operator ($\mathcal{D}^{2new}:\mathcal{U}_\bullet\rightarrow \bar{\mathcal{U}}_{\bullet}^x\times \bar{\mathcal{U}}_{\bullet}^y\times \mathcal{U}_\times$) \\
         \textbf{Section~\ref{sec:div}} \\
         $\text{div}_{new}$ & the proposed discrete divergence operator (adjoint of $-\mathcal{D}^{new}$) \\
         $\text{Div}_{new}$ & the proposed discrete vector divergence operator (adjoint of $-\mathcal{E}^{new}$) \\
         $\text{div}^2_{new}$ & the proposed discrete second-order divergence operator (adjoint of $\mathcal{D}^{2new}$) \\
         \textbf{Section~\ref{sec35}} \\
       $L_\bullet$  & domain conversion operator to the pixel centers for the proposed model ($L_\bullet:{\mathcal{U}}_\leftrightarrow\times{\mathcal{U}}_\updownarrow\rightarrow {\mathcal{U}}_\bullet\times {\mathcal{U}}_\bullet$) \\
       $L_\leftrightarrow$ & domain conversion operator to the horizontal edge centers of pixels for the proposed model \\
       $L_\updownarrow$ & domain conversion operator to vertical edge centers of pixels for the proposed model \\
       $L_\bullet$ & domain conversion operator to the pixel centers for the proposed model ($L_\bullet: \bar{\mathcal{U}}_\bullet^x\times \bar{\mathcal{U}}_\bullet^y \times{\mathcal{U}}_{\times}\rightarrow {\mathcal{U}}_\bullet\times {\mathcal{U}}_\bullet\times {\mathcal{U}}_\bullet$) \\
       \textbf{Section~\ref{sec:model}} \\
       $\text{TGV}_{\alpha}^{2(new)}$ & proposed discrete second-order TGV with regularization parameters $\alpha=(\alpha_0,\alpha_1)$ \\
       $I_{K}$ & indicator function of the set $K$ \\
       \textbf{Section~\ref{subsec:extensions}} \\
       $\text{TV}_1$ & the discrete total variation introduced in \cite{Stor1} \\
       $\text{TV}_2$ & the discrete total variation introduced in \cite{Hosseini2} \\
       \textbf{Section~\ref{invariance}} \\
       $\mathcal{R}$ & $90^\circ$ rotation operator \\
       \textbf{Section~\ref{sec:algorithms}} \\
       $\text{prox}$ & proximal operator \\
       $\text{shrink}$ &shrinkage operator \\
   \end{supertabular}

        \begin{appendices}

	\section{A Higher-Order Finite-Difference Scheme for Discrete TV}
	\label{sec:central_diff}\setcounter{figure}{0}\setcounter{table}{0}		
	{%
		Recall that the basis for classic discrete TV \cite[Section 3]{tv} is the following finite-difference approximation of the derivative:
		\begin{equation}\label{Difff}
			f'(x)\approx \frac{f(x+h)-f(x)}{h},
		\end{equation}
		leading to the well-known two-point stencils of approximation order 1.
		In the following, instead of (\ref{Difff}), we use the following central differences formula for TV denoising and compare it with the classic discrete TV:
		\begin{equation}\label{Difff2}
			f'(x)\approx \frac{f(x+h) - f(x-h)}{2h}. %
		\end{equation}
		This approximation leads to a three-point finite-difference stencil which has approximation order 2.
		Using symmetric boundary conditions for $u\in\mathbb{R}^{N_1\times N_2}$, we define the central differences discrete TV as follows:
		\begin{align} \notag
				(\tilde{D}_x u)(n_1,n_2) &= \tfrac12 (u(n_1+1,n_2) - u(n_1-1,n_2)), \\ \notag
                  (\tilde{D}_y u)(n_1,n_2) &= \tfrac12 (u(n_1,n_2+1) - u(n_1,n_2-1)), \\ \notag & \qquad\qquad\qquad
				n_1 = 1,\ldots,N_1, \ n_2 = 1,\ldots,N_2,
				\\\label{testt}
				\displaystyle \widetilde{\text{TV}} (u) &= \sum_{n_1=1}^{N_1}\sum_{n_2=1}^{N_2} \sqrt{(\tilde{D}_x u)(n_1,n_2)^2+(\tilde{D}_y u)(n_1,n_2)^2}.
		\end{align}
		The $\widetilde{\text{TV}}$ model can easily be tested using a primal-dual algorithm similar to Algorithm~\ref{algorithmTV} below. Fig. \ref{test}, shows the reference image, a noisy version with zero-mean additive Gaussian noise of standard deviation $0.2$, and the restored images via classic discrete TV and central differences discrete TV according to (\ref{testt}).  One clearly recognizes that the restored image using (\ref{testt}) is inaccurate and admits undesired checkerboard artifacts. The reason could be that locally, checkerboard-like images induce a vanishing discrete gradient when using a central-difference approximation and are hence preferred by the model. A similar effect can be observed when using a 5-point finite-difference stencil and it appears that higher-order finite-difference schemes are generally not suitable for variational denoising problems. When designing finite-difference schemes for TV denoising, it seems to be essential to ensure that locally, the corresponding discrete gradient only vanishes where the image is constant. The property is certainly satisfied for the classical model that bases on~\eqref{Difff}.}		
	\begin{figure} \centering \fontsize{8}{9.5}\selectfont
		\begin{tabular}{c@{\ \ }c}
			\includegraphics[height=3.6cm]{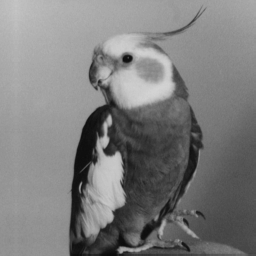}
                  & \includegraphics[height=3.6cm]{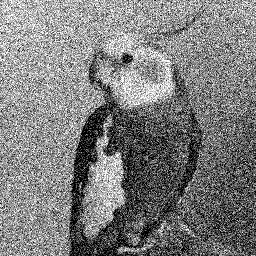} \\
                    			($a$) reference image
			& ($b$) noisy image \\[0.25em]
			\includegraphics[height=3.6cm]{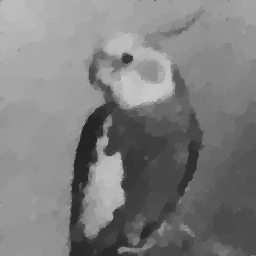}
			& \includegraphics[height=3.6cm]{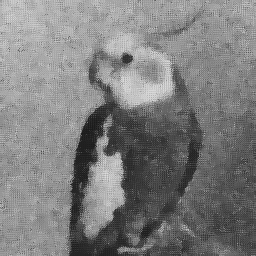}    \\
			($c$) TV-restored
			& ($d$) $\widetilde{\text{TV}}$-restored
		\end{tabular}
		\caption{{Test and comparison of the $\widetilde{\text{TV}}$ model. The chosen parameter is $\lambda^*=0.2$, the quality metrics are as follows: ($c$) PSNR=28.06, SSIM=0.8327, ($d$) PSNR=23.71, SSIM=0.3664. The change of the derivative approximation scheme leads to clearly visible artifacts and a significant drop of reconstruction quality.}}\label{test}
	\end{figure}		
	\section{Algorithms for Inverse Problems and Computational Complexities of Denoising}
	\label{sec:alg_comp}\setcounter{figure}{0}\setcounter{table}{0}		
	The primal-dual Algorithm~\ref{algorithm1} for {the inverse problems} (\ref{DPS}) $(a)$--$(c)$ could be realized by the computational schemes outlined in Algorithms \ref{algorithmTV}--\ref{algorithmTGV}. We provide them for the sake of completeness and reproducibility. In particular, they allow for a rough estimate of the computational complexity of these algorithms for denoising a given $N_1\times N_2$ gray image in terms of flops per iteration. These estimates are summarized in Table \ref{table:complexity}. In summary, we obtain the following relations of the number of the flops of the proposed algorithm with TGV and Condat-TV:
          \begin{align*}
            &\text{Number of flops for proposed TGV model} \\ & \qquad \approx 1.8\times \text{Number of flops for classic TGV model}, \\
            & \text{Number of flops for proposed TGV model} \\ & \qquad \approx 1.8\times \text{Number of flops for Condat-TV model}.
          \end{align*}
		The experimental MATLAB code in \cite{Hosseini_data} confirms that these relations also approximately transfer to the utilized CPU time.
		\begin{algorithm}[t]
                  			\caption{{An algorithm for solving the TV inverse
					problem (\ref{DPS}) $(a)$.}} \label{algorithmTV}
                                    \begin{algorithmic}
                                      \Require Degraded image $f \in Range(\mathcal{B})$, %
                                        $\lambda > 0$ %
                                      \Ensure For the iteration number $N$, $u^N$ is an approximation of the solution of the primal problem which is the reconstructed	image. Moreover, $(w^N, s^N)$ is an approximation of a solution of the dual problem
                                      \Initialization Choose $\sigma > 0$ and $\tau > 0$ such that $\sigma\tau \|{L_i}\|^2 < 1$, $L_i=\left(\begin{array}{c}\mathcal{D} \\ \mathcal{B}\end{array}\right)$ 
                                      \State Choose the initial approximation of a primal solution $u^0=\tilde{u}^0 = f \in{\mathbb{R}}^{N_1\times N_2}$ 
                                      \State Choose an arbitrary initial approximation of a dual solution  {$w^0\in (\mathbb{R}^{N_1\times N_2})^2, s^0\in Range(\mathcal{B})$} 
                                      \While{convergence criterion not met, for\ $k=0,1,\ldots$}
                                        \State $p^{k+1}=w^k+\sigma \mathcal{D}\tilde u^k$
                                         \State $\displaystyle w^{k+1} = \frac{\lambda p}{\max(|p|, \lambda)}$ \State $s^{k+1}=\frac{s^k+\sigma(\mathcal{B}\tilde{u}^k- f)}{1+\sigma}$
                                       \State   $\displaystyle u^{k+1} = {u^k - \tau (\mathcal{B}^* s^{k+1}-\text{div} \, w^{k+1})}$ 
                                          \State $\tilde{u}^{k+1} = 2u^{k+1} - u^k$
                                          \EndWhile 
                                      \end{algorithmic}
                                    \end{algorithm}
                                    \begin{algorithm}[t]
                                      			\caption{{An algorithm for solving the Condat-TV inverse
					problem (\ref{DPS}) $(b)$.}} \label{algorithmC}
                                    \begin{algorithmic}
                                      \Require Degraded image $f \in Range(\mathcal{B})$, %
                                        $\lambda > 0$ %
                                      \Ensure For the iteration number $N$, $z^N=(w_\bullet^N, w_\leftrightarrow^N, w_\updownarrow^N, u^N)$ is an approximation of the solution of the primal problem and $u^N$ is the obtained denoised			image. Moreover, $(w^N,s^N)$ is an approximation of a solution of the dual problem
                                      \Initialization Choose $\sigma > 0$ and $\tau > 0$ such that  $\sigma\tau \|L_c\|^2 < 1$, $L_c=\left(\begin{array}{cccc}L_{\bullet}^* & L_{\leftrightarrow}^* &  L_{\updownarrow} & -\mathcal{D}\\
					0 & 0 & 0 & \mathcal{B}
                                      \end{array}\right)$
                                  \State Choose the initial approximation $u^0=\tilde{u}^0 \in{\mathbb{R}}^{N_1\times N2}$
                                  \State Choose an arbitrary initial approximation of a primal solution  $w_\star^0= \tilde{w}_{\star}^0 \in (\mathbb{R}^{N_1\times N_2})^2$, $\star = \bullet, \leftrightarrow, \updownarrow$
                                  \State Choose an arbitrary initial approximation of a dual solution  {$w^0\in (\mathbb{R}^{N_1\times N_2})^2, s^0\in Range({\mathcal{B}})$} 
                                  \While{convergence criterion not met, for\ $k=0,1,\ldots$}
                                  \State $w^{k+1}=w^k+\sigma(L^*_\bullet \tilde w_\bullet^k + L^*_\leftrightarrow \tilde w_\leftrightarrow^k + L^*_\updownarrow \tilde w_\updownarrow^k - \mathcal{D}\tilde u^k)$
                                      \State $s^{k+1}=\frac{s^k+\sigma(\mathcal{B}\tilde{u}^k- f)}{1+\sigma}$ 
                                     \ForAll{$\star = \bullet, \leftrightarrow, \updownarrow$} \State $w_\star^{k+1} = \text{shrink}_{\lambda\tau}(w_\star^k - \tau L_\star w^{k+1})$ \EndFor
                                      \State $\displaystyle u^{k+1} = {u^k - \tau (\mathcal{B}^* s^{k+1}+\text{div} w^{k+1})}$ 
                                      \ForAll{$\star=\bullet, \leftrightarrow, \updownarrow$} \State $\tilde{w}_\star^{k+1}=2w^{k+1}_\star- w_\star^k$ \EndFor
                                      \State $\tilde{u}^{k+1} = 2u^{k+1} - u^k$
                                    \EndWhile
                                  \end{algorithmic}
                                \end{algorithm}
                                \begin{algorithm}[t]
                                  		\caption{{An algorithm for solving the second-order TGV inverse
					problem (\ref{DPS}) $(d)$.}} \label{algorithmTGV}
                                    \begin{algorithmic}
                                      \Require Degraded image $f \in Range({\mathcal{B}})$, %
                                        $\alpha=(\alpha_0,\alpha_1), \alpha_0 > 0, \alpha_1 > 0$ %
                                      \Ensure For the iteration number $N$, $z^N=(v^N, w^N, u^N, \omega^N)$ is an approximation of the solution of the primal problem  and $u^N$ is the obtained denoised			image. Moreover, {$y^N=(\bar{v}^N, \bar{w}^N, {s}^N)$} is an approximation of a solution of the dual problem
             \Initialization                    Choose $\sigma > 0$ and $\tau > 0$ such that $\sigma\tau \|L_{\text{TGV}}\|^2 < 1,$  $L_{\text{TGV}} = \left(\begin{array}{cccc} I & 0 & 0 & -\mathcal{E} \\ 0 & I &  -\mathcal{D} & I\\
					0 & 0 & \mathcal{B} & 0 
                                      \end{array}\right)$
                                  \State Choose the initial approximation $u^0=\tilde{u}^0  \in\mathbb{R}^{N_1\times N_2}$ 
                                  \State Choose an arbitrary initial approximation of a primal solution $v^0 = \tilde{v}^0 \in (\mathbb{R}^{N_1\times N_2})^3$, 
                                  $w^0= \tilde{w}^0 \in (\mathbb{R}^{N_1\times N_2})^2$, $\omega^0 = \tilde{\omega}^0 \in(\mathbb{R}^{N_1\times N_2})^2$
                                  \State Choose an arbitrary initial approximation of a dual solution $\bar{v}^0\in (\mathbb{R}^{N_1\times N_2})^3$, $\bar{w}^0\in (\mathbb{R}^{N_1\times N_2})^2, {s^0\in Range({\mathcal{B}})}$ 
                                  \While{convergence criterion not met, for $k=0,1,\ldots$}
                                  \State $\bar{v}^{k+1}=\bar{v}^k+\sigma(\tilde{v}^k-\mathcal{E}\tilde\omega^k)$
                                  \State $\bar{w}^{k+1}=\bar{w}^k+\sigma(\tilde {w}^k - \mathcal{D}\tilde u^k +\tilde\omega^k)$
                                  \State    $s^{k+1}=\frac{s^k+\sigma(\mathcal{B}\tilde{u}^k- f)}{1+\sigma}$
                           \State           $v^{k+1} = \text{shrink}_{\alpha_0\tau}(v^k - \tau \bar{v}^{k+1})$ \State $w^{k+1} = \text{shrink}_{\alpha_1\tau}(w^k - \tau\bar{w}^{k+1})$
            \State                          $\displaystyle u^{k+1} = {u^k - \tau (\mathcal{B}^* s^{k+1}+\text{div} \, w^{k+1})}$ \State $\omega^{k+1} = \omega^k - \tau(\text{Div} \, \bar{v}^{k+1} + \bar{w}^{k+1})$ \State
                                      $\tilde{v}^{k+1}=2v^{k+1}-v^k$ %
                                      \State $\tilde{w}^{k+1}=2w^{k+1}- w^k$ 
                                      \State $\tilde{u}^{k+1} = 2u^{k+1} - u^k$ \State $\tilde\omega^{k+1} = 2\omega^{k+1} - \omega^k$
                                    \EndWhile
                                  \end{algorithmic}
                                \end{algorithm}
	\begin{table} \centering \tablebodyfont
          		\caption{Computational complexity per iteration of the denoising algorithms ($\mathcal{B}=I$) for a given $N_1\times N_2$ noisy image, i.e., classic TV (Algorithm \ref{algorithmTV}), Condat-TV (Algorithm \ref{algorithmC}), classic second-order TGV (Algorithm \ref{algorithmTGV}), and the proposed TGV (Algorithm \ref{algorithm2}).} %
		\label{table:complexity}
		\begin{tabular}{lcccc}
			\toprule
			Operators/Model              & TV         & Condat-TV       & TGV         & Proposed     \\
			\midrule
			Number of \\	square roots        & $N_1N_2$   & $3N_1N_2$       & $2N_1N_2$   & $4 N_1N_2$   \\
			Number of \\	comparisons         & $N_1N_2$   & $3N_1N_2$       & $2N_1N_2$   & $4 N_1N_2$   \\
			\midrule
			&            & Number of flops &             &              \\
			\midrule
			Gradients and  \\ divergences    & $4N_1N_2$  & $4N_1N_2$       & $16N_1N_2$  & $ 16N_1N_2$  \\
			Grid converters \\ and adjoints & --         & $24N_1N_2$      & --          & $ 30N_1N_2$  \\
			Shrinkage \\ operator           & $7N_1N_2$  & $21N_1N_2$      & $17 N_1N_2$ & $31 N_1N_2$  \\
			Main body                    & $11N_1N_2$ & $41N_1N_2$      & $54N_1N_2$  & $82 N_1N_2$  \\
			\midrule
			Total number \\ of flops        & $22N_1N_2$ & $90N_1N_2$      & $87N_1N_2$  & $159 N_1N_2$ \\
			\botrule
		\end{tabular}
	\end{table}

      \end{appendices}
    \end{document}